\documentclass[12pt,a4paper]{amsart} 
\usepackage{times}
\usepackage{amssymb,amsmath,amsthm,amsaddr}
\theoremstyle{defin}
\usepackage{amscd}
\usepackage{float}
\usepackage{tikz-cd}
\usepackage{tikz}
\usetikzlibrary{fit,backgrounds,positioning}
\usetikzlibrary {fit,shapes.geometric} 
\usepackage{mathtools}
\usepackage{hyperref}
\usepackage[utf8]{inputenc}
\usepackage[T1]{fontenc}  
\usepackage{graphicx}
\usepackage{vmargin} 
\usepackage{url} 
\usepackage{tikz-3dplot}
\usepackage{subcaption}
\usetikzlibrary{arrows.meta, positioning}

\tikzset{
	>=Stealth,
	node/.style={circle, draw, minimum size=6mm, inner sep=0pt}
}
\usepackage{enumitem}
\usepackage{wrapfig}
\usepackage{ae}
\usepackage{here} 
\usepackage{tikz} 
\usetikzlibrary{3d}

\newtheorem{prop}{Proposition}[section] 
\newtheorem{ex}{Example}
\newtheorem{defin}{Definition}[section]
\newtheorem{remarque}{Remark}[section] 
\newtheorem{exemple}{Example}[section] 
\newtheorem{lem}{Lemma}[section] 
\newtheorem{cor}{Corollary}[section]

\newtheorem{prob}{Open Problem}[section]
\newtheorem{rem}{Remark}[section]
\newcommand{\seqnum}[1]{\href{https://oeis.org/#1}{\rm \underline{#1}}}
 
\newcommand{\Fix}{{\rm Fix}}
\def\Aut{\mathrm{Aut}}

\def\Id{\mathrm{Id}}

\def\Aut{\mathrm{Aut}}

\def\Aut{\mathrm{Aut}}

\def\Id{\mathrm{Id}}

\def\Frob{\mathrm{Frob}}

\def\Aut{\mathrm{Aut}}

\def\Ens{\mathrm{\mathbf{Ens}}}

\def\Aut{\mathrm{Aut}}

\def\Ens{\mathrm{\mathbf{Ens}}}

\newcommand{\charac}{\raise 2pt\hbox{\large$\chi$}}  
\newcommand{\phirac}{\raise 2pt\hbox{\large$\phi$}} 

\begin{document}
	\title{Species, Symmetric Functions, and Kronecker Product}
    \author{Josaphat Baolahy}
    \address{Department of Mathematics, Faculty of Science, University of Fianarantsoa, Madagascar}
    \email{japhtbaolah@gmail.com}
    \thanks{The first author was supported by the IMU GRAID program.} 

    \author{Randrianirina Benjamin}
    \address{Department of Mathematics, Faculty of Science, University of Fianarantsoa, Madagascar}
    \email{rabezarand@gmai.com}
    \date{\today}
    \begin{abstract}
We study two new families of symmetric functions arising from a species-theoretic construction motivated by cycle structure. For each partition of $n$, we define two combinatorial species that decompose into molecules indexed by the same partition, giving rise to two corresponding basis of the homogeneous symmetric functions of degree $n$. We prove that each of these families forms a basis by exhibiting explicit cycle-index formulas and triangular transition matrices to the power-sum basis. Using these constructions, we generalize a classical result describing the Kronecker (Hadamard) product in the homogeneous basis to the two new settings. In particular, we show that the categories generated by these species are closed under the Kronecker product, and that the product of two basis elements expands with nonnegative integer coefficients. Our results provide a new combinatorial framework for studying the Kronecker product and suggest avenues toward interpreting its structure constants.\\
MSC 2020: 05E05, 05A15, 05E10, 18D10\\
\smallskip
\noindent \textbf{Keywords.} \textit{Algebraic Combinatorics Symmetric Functions, Combinatorial Species Theory, Representation theory, cycle index, Kronecker product.}
\end{abstract}

	\maketitle
\section{Introduction}
The Kronecker product of symmetric functions, and its notoriously difficult structure constants for the Schur basis, is a central problem in algebraic combinatorics. This paper explores this problem through the lens of combinatorial species, where other natural basis yield positive and computable, though still mysterious, coefficients. Let $V$ and $W$ be two representations of the general linear group $\mathrm{GL}_n(\mathbb{C})$, with respective characters $\chi_V$ and $\chi_W$. The tensor product $V\otimes W$ is itself a representation of $\mathrm{GL}_n(\mathbb{C})$, whose character is given by:
$$
(\chi_V\star \chi_W)(M) = \chi_V(M)\cdot \chi_W(M), \quad \mbox{ for all $M\in \mathrm{GL}_n$}
$$ 
where the product on the right is complex number multiplication. This product $\star$ is the famous Kronecker product of symmetric functions.
A central problem regarding the Kronecker product arises from the basis of Schur functions. Recall that if $\lambda$ is a partition of $n$, the Schur function $s_{\lambda}$ is the character of the Weyl module $W^{\lambda}$, and the family $\left\{ s_{\lambda} \right\}_{\lambda\vdash n}$ forms a basis of the algebra of symmetric functions. Since the tensor product $W^{\alpha}\otimes W^{\beta}$ is a representation of $\mathrm{GL}_n$, its character is Schur-positive; that is,
$$
s_{\alpha}\star s_{\beta} = \sum_{\mu}g_{\alpha,\beta}^{\mu} s_{\mu} \quad \mbox{ where $g_{\alpha,\beta}^{\mu}\in \mathbb{N}$}.
$$
However, finding a combinatorial interpretation of the coefficients $g_{\alpha,\beta}^{\mu}$ remains a major open problem.

The work of A.M. Garsia and J. Remmel \cite{GR} provided a solution to an analogous question for the basis of complete homogeneous symmetric functions $\{h_{\lambda}\}_{\lambda\vdash n}$:
\[
h_{\alpha}\star h_{\beta} \;=\; \sum_{\mu} NM_{\alpha,\beta}^{\mu}\, h_{\mu}
\]
where $NM_{\alpha,\beta}^{\mu}$ counts the number of non-negative integer matrices such that the sum of the $i$-th row is equal to $\alpha_i$, the sum of the $j$-th column is equal to $\beta_j$, and the entries of the matrix (when arranged in non-increasing order) form the partition $\mu$.
Extending this result to other basis of the ring of symmetric functions offers new insights into the structure of the Kronecker product. While this does not solve the open problem regarding the Kronecker coefficients directly, it represents significant progress in this exploration. The primary contribution of this paper is the introduction of two new basis of $\Lambda_n$, denoted $\left\{\mathbf{C}_{\alpha}(\mathbf{z})\right\}_{\alpha \vdash n}$ and $\left\{\mathbf{K}_{\alpha}(\mathbf{z})\right\}_{\alpha\vdash n}$, for which the Kronecker coefficients are explicit. Readers unfamiliar with the theory of symmetric functions are referred to the classical texts~\cite{IG}\cite{SR}.
These new basis are the \textit{cycle index series} of families of molecular species defined as follows. For any partition $\alpha = (i_1^{\alpha_1},\ldots, i_m^{\alpha_n})$, we set
\[
\mathbf{C}_{\alpha} \;:=\; \left(X^n/\langle \sigma \rangle\right),
\mbox{ where $\sigma$ is a permutation of cycle type $\alpha$, denoted $\lambda(\sigma)=\alpha$ }\]
and 
\[
\mathbf{K}_{\alpha} \;:=\; \mathbf{C}_{i_1^{\alpha_1}} \cdots \mathbf{C}_{i_m^{\alpha_m}}.
\]
Their corresponding cycle index series are given by
\[
\mathbf{C}_{\alpha}(\mathbf{z}) \;=\; \frac{1}{o(\sigma)} \sum_{k=1}^{o(\sigma)} p_{\lambda(\sigma^k)}(\mathbf{z})
\qquad \text{and} \qquad
\mathbf{K}_{\alpha}(\mathbf{z}) \;=\; \mathbf{C}_{i_1^{\alpha_1}}(\mathbf{z}) \cdots \mathbf{C}_{i_m^{\alpha_m}}(\mathbf{z}).
\]
The complete homogeneous symmetric functions provide a classical example of symmetric functions derived from molecular species. Indeed, they can be expressed in terms of the cycle index series as follows:
\[
h_{\lambda} = h_{\lambda_1} \cdots h_{\lambda_k} = Z_{\mathbf{E}_{\lambda_1} \cdots \mathbf{E}_{\lambda_k}}, \quad 
\]
where $\lambda = (\lambda_1, \ldots, \lambda_k)$ is a partition, and $\mathrm{E}$ is the species of sets. Here, $Z_{\mathrm{S}}$ denotes the cycle index series of a species $\mathrm{S}$. 
To emphasize the structural parallels between these basis, we simultaneously study the three families of molecular species $\{\mathbf{E}_{\alpha}\}_{\alpha\vdash n}$, $\{\mathbf{C}_{\alpha}\}_{\alpha\vdash n}$, and $\{\mathbf{K}_{\alpha}\}_{\alpha\vdash n}$, where $\alpha$ is a partition of $n$. The result of A.M. Garsia and J. Remmel \cite{GR} (see \cite{GR}) corresponds to the case of $\{\mathbf{E}_{\alpha}\}_{\alpha\vdash n}$, recast here in the language of combinatorial species theory.
Our second main contribution is to extend this result to the basis $\{\mathbf{C}_{\alpha}(\mathbf{z})\}_{\alpha\vdash n}$ and $\{\mathbf{K}_{\alpha}(\mathbf{z})\}_{\alpha\vdash n}$ by establishing the following decomposition formulas for the $\star$-product (the Hadamard product of species generating functions):
\[
	\mathbf{C}_{\alpha}(\mathbf{z})\star \mathbf{C}_{\beta}(\mathbf{z}) \; = \; \sum_{\mu} b_{\alpha,\beta}^{\mu}\, \mathbf{C}_{\mu}(\mathbf{z}) \quad \text{ and }\quad 
	\mathbf{K}_{\alpha}(\mathbf{z})\star \mathbf{K}_{\beta}(\mathbf{z}) \; = \; \sum_{\mu} j_{\alpha,\beta}^{\mu}\, \mathbf{K}_{\mu}(\mathbf{z}).
\]
In terms of combinatorial species, these identities translate into the existence of the following natural isomorphisms:
\[
\mathbf{C}_\alpha \times \mathbf{C}_\beta = \sum_{\mu \vdash n} b_{\alpha,\beta}^{\mu}\, \mathbf{C}_{\mu},
\qquad \mbox{ and }\quad 
\mathbf{K}_\alpha \times \mathbf{K}_\beta = \sum_{\mu \vdash n} j_{\alpha,\beta}^{\mu}  \, \mathbf{K}_{\mu}.
\] 
To interpret this work within the framework of species theory, we introduce three subcategories of the category of combinatorial species $\mathrm{Esp}$, whose elements are positive linear combinations of $\mathbf{E}_{\mu}$, $\mathbf{C}_{\mu}$, and $\mathbf{K}_{\mu}$, respectively. We denote them by
\[
\mathrm{H}\mathrm{Esp}, 
\qquad 
\mathrm{C}\mathrm{Esp}, 
\qquad 
\mathrm{K}\mathrm{Esp}.
\]
The objects within these subcategories take the form:
\[
F=\sum_{\mu} a_{\mu} \mathbf{E}_{\mu}, 
\qquad 
G=\sum_{\mu} b_{\mu} \mathbf{C}_{\mu}, 
\qquad 
H=\sum_{\mu} d_{\mu} \mathbf{K}_{\mu}.
\]
To provide a combinatorial proof of closure under the Hadamard product, we utilize standard results from combinatorial species theory. This approach allows us to reformulate the proof of A.M. Garsia and J. Remmel \cite{GR} in the language of species. The closure of the other two categories is established similarly, though in distinct contexts. Moreover, for all $r \geq 1$, we satisfy the identities:
\[
\mathbf{C}_{\alpha}(\mathbf{z})(p_1^r) = \mathbf{C}_{\alpha^r}(\mathbf{z}), 
\qquad \mbox{and} \qquad
\mathbf{K}_{\alpha}(\mathbf{z})(p_1^r) = \mathbf{K}_{\alpha^r}(\mathbf{z}),
\]
where $\alpha^r$ denotes the partition obtained from $\alpha$ by repeating each part $r$ times.
 
The paper is organized as follows. In Section 2, we review the fundamental theory of combinatorial species and the general concept of molecular species. Section 3 is dedicated to the detailed construction and study of the three specific families of molecules: set molecules, cyclic molecules of the first kind, and cyclic molecules of the second kind. In Section 4, we explicitly define the associated symmetric functions $\{C_{\alpha}(\mathbf{z})\}$ and $\{K_{\alpha}(\mathbf{z})\}$ and establish them as basis of the ring of symmetric functions by exhibiting triangular transition matrices to the power-sum basis. Finally, in Section 5, we introduce the subcategories $\mathrm{CEsp}$ and $\mathrm{KEsp}$ and prove our main result regarding their closure under the Hadamard (Kronecker) product, extending the classical framework of A.M. Garsia and J. Remmel \cite{GR}.
\section{Combinatorial Species and Molecular Species}
In this section, we introduce the notions of combinatorial species and molecular species. By analogy with the structure of matter, which is an assembly of molecules, combinatorial species are finite sums of molecular species. Beyond this fundamental property, this work demonstrates that there exist three families of molecular species whose cycle index series form a $\mathbb{Q}$-basis of the ring of symmetric functions. We begin with the formal definition of a combinatorial species.
\begin{defin}
A combinatorial species $F$ is a functor from the category of finite sets and bijections $\mathbb{B}$ to the category of finite sets and functions $\Ens$, i.e.
$$
F : \mathbb{B} \to \Ens.
$$
\end{defin}
It is a procedure that constructs a set of structures $F[U]$ from a finite set $U$. The elements of $F[U]$ are called \textit{the $F$-structures on U}.
Some classical examples include:
\begin{itemize}
	\item the species of sets $\mathrm{E}$, which assigns the set $\mathrm{E}[U] = \{U\}$ to each finite set $U$.
	\item the species of subsets $\mathcal{P}$, which assigns the set $$\mathcal{P}[U] = \{A\;|\;A\subset U\}$$ to each finite set $U$.
	\item the species of partitions $\mathrm{Par}$, which assigns the set 
	$$
	\mathrm{Par}[U] = \{\pi \;|\; \pi \mbox{ is a partition of the set $U$}\}
	$$
	to each finite set $U$.
	\item the species of permutations $\mathcal{S}$, which assigns the set 
	$$
	\mathcal{S}[U] = \{\sigma \;|\; \sigma \mbox{ is a permutation of the set $U$}\}
	$$
	to each finite set $U$.
	\item the species of simple graphs $\mathcal{G}$, which assigns the set 
	$$
	\mathcal{G}[U] = \{(U,E) \;|\; (U,E) \mbox{ is a simple graph with vertex set $U$ and edge set $E$}\}
	$$
	to each finite set $U$.
\end{itemize} 
Given a finite set $U$, we can associate to each species $F$ its unlabelled version $\tilde{F}$, defined as the quotient of $F[U]$ by the following equivalence relation:
$$
\forall s,t \in F[U],\; s\sim t \quad \Leftrightarrow \quad \exists \sigma : U\xrightarrow{\simeq} U,\; F[\sigma]s = t.
$$
The elements of $\tilde{F}$ are called the \textit{isomorphism types of} $F$ or the unlabelled structures of $F$. They are obtained by ignoring the labels of the elements in $U$ on the $F$-structures. We define the cycle index series of a species $F$ by
$$
	Z_{F}(p_1,\ldots,) = \sum_{n\geq 0} \frac{1}{n!}\sum_{\sigma\in S_n} \#\Fix[F[\sigma]]p_{\lambda(\sigma)}(x_1,\ldots), 
$$
where $p_{\lambda(\sigma)}$ is the power sum associated with the cycle type of the permutation $\sigma$. This concept was first introduced by George Pólya to study the enumeration of objects under symmetry (see \cite{GP}).
In this work, we are particularly interested in molecular species, a specific class of species defined below. They are characterized by having only one isomorphism type, and every species can be decomposed into a sum of products of molecular species (see \cite{FGP} \cite{JOYAL19811}). Our work highlights another significant property for some of these molecular species: the family of their cycle index series forms a $\mathbb{Q}$-basis of the ring of homogeneous symmetric functions $\Lambda_n$. 

\begin{defin}
Let $H$ be a subgroup of $S_n$. We define the associated molecular species by setting, for every finite set $U$,
$$
\left(X^n/H\right)[U] = \{\lambda H\;\big|\; \lambda :  [n]\xrightarrow{\simeq} U \} \quad \mbox{ where }\quad \lambda H = \{\lambda f\;|\; f\in H\},
$$
and for every bijection $\sigma:U\to V$,
$$
\left(X^n/H\right)[\sigma]\lambda H = (\sigma \lambda) H.
$$
The operation is the composition of functions.
\end{defin}
\begin{remarque}
	The species $\left(X^n/H\right)$ has only one isomorphism type. Indeed, since the action of $H$ on $S^n/H$ is transitive, for every $\lambda_1\, H, \lambda_2\, H\in \left(X^n/H\right)[U]$ there exists a $\sigma\in S_{U}$ such that $\lambda_1\, H = (\sigma\lambda_2)\, H$ (see \cite{FGP}).  
\end{remarque}
In general, we have the following results:
\begin{ex}[\cite{FGP}]\leavevmode
	\begin{enumerate}
		\item[(a)] If $\alpha = (\alpha_1,\alpha_2\ldots,\alpha_m)$ and $S_{\alpha}=S_{\alpha_1}\times S_{\alpha_2}\times\cdots \times S_{\alpha_m} $, then $$\left(X^n/S_{\alpha}\right)=\mathbf{E}_{\alpha_1}\cdot \mathbf{E}_{\alpha_2}\cdots \mathbf{E}_{\alpha_m}=\mathbf{E}_{\alpha}.$$
		\item[(b)] Let $\sigma \in S_n$. Then
		$\left(X^n/\langle \sigma\rangle\right)[n] = \{\lambda \langle \sigma \rangle\;\big|\; \lambda:[n]\to U\}
		$. These are referred to as cyclic species of the first kind.
		\item[(c)] Let $\sigma$ be a permutation whose cycle decomposition is of the form 
		$$
		\sigma = \sigma_1^{1}\cdots \sigma_1^{m_1}\sigma_2^{1}\cdots \sigma_2^{r_2}\cdots \sigma_n^{1}\cdots \sigma_m^{r_m}		
		$$ 
		and let
		$G_{\sigma} = \langle\sigma_1^{1}\cdots \sigma_1^{r_1} \rangle \times \langle\sigma_2^{1}\cdots \sigma_2^{r_2}\rangle\times \cdots \times\langle\sigma_m^{1}\cdots \sigma_n^{r_m}\rangle$,
		where the $\sigma_i$ are cycles of length $i$.
	We define another molecular species by
		$$
		X^n/G_{\sigma}.
		$$
	\end{enumerate}
\end{ex}
These molecular species are the fundamental objects of this work. We will later provide a more detailed study of them. We recall the following classical property.
\begin{lem}\label{espconj}\cite{FGP}
	Let $H,K\leq S_n$. Then
	$$
	\left(X^n/H\right)=\left(X^n/K\right) \Leftrightarrow \mbox{$H$ and $K$ are conjugate}.
	$$
\end{lem}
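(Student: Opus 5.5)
The plan is to prove both directions directly from the definition of $X^n/H$, where equality of species means natural isomorphism. Throughout, $\Aut(s)$ denotes the stabilizer of a structure $s$ in the symmetric group of its underlying set.

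\emph{Step 1 ($\Leftarrow$).} Suppose $K = \tau H \tau^{-1}$ for some $\tau \in S_n$. For each finite set $U$, define
$$\Phi_U : (X^n/H)[U] \to (X^n/K)[U], \qquad \lambda H \mapsto \lambda\tau^{-1} K.$$
\begin{itemize}
\item It is well defined: if $\lambda H = \lambda' H$, then $\lambda' = \lambda h$ with $h \in H$, and $\lambda h \tau^{-1} = \lambda \tau^{-1}(\tau h \tau^{-1})$ with $\tau h\tau^{-1} \in K$.
\item It is a bijection, with inverse $\mu K \mapsto \mu\tau H$.
\item It is natural: for a bijection $\sigma : U \to V$, both composites send $\lambda H$ to $\sigma\lambda\tau^{-1}K$, since composition is associative.
\end{itemize}
Hence $\Phi$ is a natural isomorphism, and the two species are equal.

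\emph{Step 2 ($\Rightarrow$).} Let $\Phi : X^n/H \to X^n/K$ be a natural isomorphism. Take $U = [n]$ and the structure $s = \mathrm{id}\,H$. Its stabilizer is
$$\Aut(s) = \{\sigma \in S_n \mid \sigma H = H\} = H.$$
Naturality gives $\Phi_{[n]}(\sigma\cdot s) = \sigma\cdot \Phi_{[n]}(s)$, and $\Phi_{[n]}$ is injective. Therefore $\sigma$ fixes $s$ if and only if $\sigma$ fixes $\Phi_{[n]}(s)$, so $\Aut(\Phi_{[n]}(s)) = H$.

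Now write $\Phi_{[n]}(s) = \mu K$ with $\mu \in S_n$. Its stabilizer is
$$\{\sigma \mid \sigma\mu K = \mu K\} = \mu K \mu^{-1}.$$
Comparing the two stabilizers gives $H = \mu K\mu^{-1}$, so $H$ and $K$ are conjugate.

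\emph{Main point to check.} The only delicate step is the equivariance argument: injectivity of $\Phi_{[n]}$ together with naturality is exactly what makes stabilizers coincide. The rest is routine coset bookkeeping. We should also note that a natural isomorphism between species concentrated in cardinality $n$ is determined by its component at $U = [n]$, so working only with $U = [n]$ loses no generality.
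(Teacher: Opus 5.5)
Your proof is correct: the map $\lambda H\mapsto\lambda\tau^{-1}K$ is a well-defined natural isomorphism. Conversely, naturality together with injectivity of $\Phi_{[n]}$ gives $H=\Aut(\mathrm{id}\,H)=\Aut(\mu K)=\mu K\mu^{-1}$.

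The paper gives no proof of its own and only cites \cite{FGP}. The standard argument for this result is the one you give: $(X^n/H)[[n]]$ is the transitive $S_n$-set $S_n/H$, and two such sets are isomorphic if and only if their stabilizers are conjugate.
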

\begin{prop}\cite{FGP}
	The cycle index series of $\left(X^n/H\right)$ is given by 
	$$
	Z_{\left(X^n/H\right)} = \frac{1}{|H|}\sum_{\sigma\in H} p_{\lambda(\sigma)}.
	$$
\end{prop}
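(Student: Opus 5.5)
The plan is to compute the cycle index directly from the definition. By definition,
$$
Z_{(X^n/H)} = \sum_{m\geq 0}\frac{1}{m!}\sum_{\tau\in S_m}\#\Fix\big[(X^n/H)[\tau]\big]\,p_{\lambda(\tau)}.
$$
First, $(X^n/H)[U]$ is empty unless $|U|=n$, since there is no bijection $[n]\to U$ otherwise. So only the term $m=n$ survives, with $U=[n]$. Then $(X^n/H)[[n]]$ is the set of left cosets $S_n/H$ (identifying $\lambda H$ with a coset), and $\tau$ acts by left multiplication $\lambda H\mapsto \tau\lambda H$.

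Next I count the fixed points. A coset $\lambda H$ is fixed by $\tau$ iff $\tau\lambda H=\lambda H$, i.e.\ iff $\lambda^{-1}\tau\lambda\in H$. Hence
$$
\#\Fix[\tau] = \frac{1}{|H|}\,\#\{\lambda\in S_n : \lambda^{-1}\tau\lambda\in H\},
$$
because each coset has exactly $|H|$ representatives and the condition depends only on the coset: replacing $\lambda$ by $\lambda h$ conjugates $\lambda^{-1}\tau\lambda$ by $h\in H$.

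Substituting and exchanging the order of summation gives
$$
Z = \frac{1}{n!\,|H|}\sum_{\lambda\in S_n}\ \sum_{\tau\,:\,\lambda^{-1}\tau\lambda\in H} p_{\lambda(\tau)}.
$$
For fixed $\lambda$, the map $\tau\mapsto\sigma=\lambda^{-1}\tau\lambda$ is a bijection onto $H$ that preserves cycle type, so $p_{\lambda(\tau)}=p_{\lambda(\sigma)}$. The inner sum therefore equals $\sum_{\sigma\in H}p_{\lambda(\sigma)}$, independently of $\lambda$. Summing over the $n!$ choices of $\lambda$ cancels the $n!$, which yields the stated formula.

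The only delicate point is the fixed-point count: one must check that the condition $\lambda^{-1}\tau\lambda\in H$ is well defined on cosets, and divide by $|H|$ correctly. The rest is routine reindexing.
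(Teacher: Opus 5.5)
Your proof is correct and complete. Restricting to $|U|=n$, the fixed-point count $\#\Fix[\tau]=\frac{1}{|H|}\#\{\lambda\in S_n:\lambda^{-1}\tau\lambda\in H\}$ is justified by your coset-invariance check. The reindexing $\tau\mapsto\lambda^{-1}\tau\lambda$ is a cycle-type-preserving bijection $\lambda H\lambda^{-1}\to H$, and this finishes the computation.

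The paper gives no proof of this proposition; it only cites \cite{FGP}. So there is no internal argument to compare against, and what you wrote is the standard Burnside-type computation.

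Two remarks follow from this. First, your fixed-point count is exactly the permutation character of $\mathrm{Ind}_H^{S_n}\mathbf{1}_H$ evaluated at $\tau$. Your calculation therefore also establishes the identification $Z_{X^n/H}=\Frob(\mathrm{Ind}_H^{S_n}\mathbf{1}_H)$, which the paper simply asserts later when deriving the Schur expansion of $\mathbf{C}_\mu(\mathbf{z})$. Second, your argument uses no hypothesis on $H$, such as transitivity, which is the generality the paper needs.

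A cosmetic point: you use $\lambda$ both for the bijection $[n]\to U$ and inside the cycle-type notation $\lambda(\tau)$. Renaming the bijection would make the reindexing step easier to read.
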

\begin{ex}\leavevmode
\begin{enumerate}
	\item Let $\alpha = (\alpha_1,\alpha_2\cdots,\alpha_m)$ be a partition of a nonnegative integer $n$. Then
	$$
	Z_{X^n/S_{\alpha}} = \prod_{i=1}^m \frac{1}{\alpha_i!}\sum_{\sigma\in S_{\alpha_i}} p_{\lambda(\sigma)}=\prod_{i=1}^m h_{\alpha_i} = h_{\alpha}.$$
	\item Let $\sigma$ be a permutation of $S_n$. Then
	$$
	Z_{\left(X^n/\langle\sigma\rangle\right)}=\frac{1}{o(\sigma)} \sum_{k=1}^{o(\sigma)} p_{\lambda(\sigma^k)}.
	$$
	\item Let $\sigma$ be a permutation whose cycle decomposition is of the form 
	$$
	\sigma = \sigma_1^{1}\cdots \sigma_1^{r_1}\sigma_2^{1}\cdots \sigma_2^{r_2}\cdots \sigma_m^{1}\cdots \sigma_m^{r_m}.
	$$
	Then
	$$
	Z_{(X^n/G_{\sigma})} = \prod_{i=1}^m Z_{(X^{r_i}/\langle\sigma_1^{1}\cdots \sigma_1^{r_i} \rangle)}.
	$$
\end{enumerate}
\end{ex}
Molecular species behave very well under operations on species.
\begin{prop}\label{propprod}\cite{FGP}
	Let $H$ and $K$ be two subgroups of $S_n$ and $S_m$ respectively. Then:
	\begin{enumerate}
		\item $\left(X^n/H\right)\cdot \left(X^m/K\right) \;=\; \left(X^{n+m}/(H\times K)\right)$,
		\item $\left(X^n/H\right)\times \left(X^n/K\right) = \sum\limits_{\tau \in H\backslash S_n \slash K} \left(X^n/H\cap \tau K\tau^{-1}\right)$.
	\end{enumerate}
\end{prop}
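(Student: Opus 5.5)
We prove the two parts of Proposition~\ref{propprod} separately. In both, a structure of $X^n/H$ on $U$ is a coset $\lambda H$ with $\lambda:[n]\to U$ a bijection, and transport is by post-composition. We use the fact that a species is determined up to isomorphism by its decomposition into transitive pieces: for each $n$, $F[[n]]$ is a finite $S_n$-set. The orbit of a structure with stabilizer $L\le S_n$ contributes $X^n/L$, because an orbit $S_n/L$ gives, naturally in $U$, the set of bijections $[n]\to U$ modulo $L$.

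\textbf{Part (1).} Write $V=U_1\sqcup U_2$ for a splitting with $|U_1|=n$ and $|U_2|=m$. We define a natural map from $(X^n/H\cdot X^m/K)[V]$ to $(X^{n+m}/(H\times K))[V]$ by
\[
(U_1,U_2,\lambda_1H,\lambda_2K)\mapsto (\lambda_1\sqcup\lambda_2)(H\times K),
\]
where $\lambda_1\sqcup\lambda_2:[n+m]\to V$ sends $[n]$ through $\lambda_1$ and $\{n+1,\dots,n+m\}$ through $\lambda_2$ (shifted). The steps are:
\begin{itemize}
\item Well-definedness, since $H\times K$ acts blockwise.
\item Naturality in $V$, which holds because transport is post-composition on both sides.
\item Bijectivity. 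The inverse reads off $U_1=\mu([n])$ and $U_2=\mu(\{n+1,\dots\})$ from any representative $\mu$. These images are independent of the representative because $H\times K$ preserves the two blocks, and then we restrict $\mu$ to each block.
\end{itemize}
Alternatively, we may just compare stabilizers: the structure $([n],\{n+1,\dots\},H,K)$ has stabilizer in $S_{n+m}$ exactly $H\times K$, and the species is transitive.

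\textbf{Part (2).} Here we use the orbit decomposition. On $[n]$, the product $(X^n/H\times X^n/K)[[n]]$ is the $S_n$-set $S_n/H\times S_n/K$ with the diagonal action by left multiplication.
\begin{itemize}
\item Every orbit contains a pair of the form $(H,\tau K)$, obtained by translating the first coordinate to $H$.
\item Two such pairs $(H,\tau K)$ and $(H,\tau' K)$ lie in the same orbit iff there is $h\in H$ with $h\tau K=\tau'K$, i.e.\ iff $\tau'\in H\tau K$. So orbits correspond bijectively to double cosets $H\tau K$.
\item The stabilizer of $(H,\tau K)$ is $\{g: gH=H,\ g\tau K=\tau K\}=H\cap\tau K\tau^{-1}$.
\end{itemize}
Hence the $S_n$-set is $\bigsqcup_{H\tau K} S_n/(H\cap \tau K\tau^{-1})$. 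Passing back to species via the correspondence between transitive $S_n$-sets and molecular species (Lemma~\ref{espconj} shows the choice of representative $\tau$ only changes the stabilizer by conjugation) gives the formula. The sum is written over $H\backslash S_n/K$.

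The main care point is making the orbit-to-species correspondence explicit and natural. Concretely, an orbit $O\subseteq F[[n]]$ generates the subspecies $U\mapsto \{F[\lambda]s : s\in O,\ \lambda:[n]\to U\}$. This subspecies is isomorphic to $X^n/\mathrm{Stab}(s_0)$ via $\lambda\,\mathrm{Stab}(s_0)\mapsto F[\lambda]s_0$, and this map is injective and well-defined precisely by the definition of the stabilizer. Everything else is bookkeeping.
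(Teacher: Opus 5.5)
Your proof is correct. The paper gives no argument of its own for this proposition (it is quoted from \cite{FGP}), and your route is the standard one. Part (1) is the explicit block-concatenation isomorphism. Part (2) decomposes the $S_n$-set $S_n/H\times S_n/K$ into orbits indexed by $H\backslash S_n/K$ with stabilizers $H\cap\tau K\tau^{-1}$, which is exactly the Mackey-type bijection the paper later invokes as Lemma~\ref{thmackey}, and you combine it with your explicit, natural identification of each orbit with $X^n/\mathrm{Stab}(s_0)$.
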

While both properties are indispensable to our work, the latter is crucial for establishing closure under the Kronecker product. These two operations correspond to classical operations on symmetric functions.
\begin{prop}[\cite{FGP}]\label{symesp}
		Let $H$ and $K$ be two transitive subgroups of $S_n$ and $S_m$ respectively. Then:
	\begin{enumerate}
		\item $Z_{\left(X^{n}/H\right)\cdot \left(X^m/K\right)} \;=\; Z_{\left(X^n/H\right)}\cdot Z_{\left(X^m/K\right)}$,
		\item $Z_{\left(X^n/H\right)\times \left(X^m/K\right)} \;=\ Z_{\left(X^n/H\right)}\star Z_{\left(X^m/K\right)}$
		where $\star$ is the Kronecker product of symmetric functions.
	\end{enumerate}
\end{prop}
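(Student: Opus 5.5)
For (1) I would work directly from the closed formula $Z_{(X^n/H)}=\frac{1}{|H|}\sum_{\sigma\in H}p_{\lambda(\sigma)}$ together with Proposition~\ref{propprod}(1), which identifies $(X^n/H)\cdot(X^m/K)$ with $X^{n+m}/(H\times K)$. An element of $H\times K\le S_{n+m}$ is a pair $(h,k)$ acting on $[n]\sqcup[m]$. Its cycle type is the union of the multisets $\lambda(h)$ and $\lambda(k)$, so $p_{\lambda((h,k))}=p_{\lambda(h)}p_{\lambda(k)}$. Hence
\[
Z_{X^{n+m}/(H\times K)}=\frac{1}{|H||K|}\sum_{h\in H}\sum_{k\in K}p_{\lambda(h)}p_{\lambda(k)}
=\Big(\frac{1}{|H|}\sum_{h\in H}p_{\lambda(h)}\Big)\Big(\frac{1}{|K|}\sum_{k\in K}p_{\lambda(k)}\Big).
\]
This is the desired factorization. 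Transitivity plays no role, and neither does the use of Proposition~\ref{propprod}, since multiplicativity of cycle indices holds for arbitrary species.

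For (2), the cartesian product only makes sense degreewise, so I take $m=n$. I would prove the general identity $Z_{F\times G}=Z_F\star Z_G$ for any species concentrated in degree $n$, and then specialize. The first step is to rewrite the cycle index by grouping permutations by cycle type. Since $\#\Fix[F[\sigma]]$ depends only on the conjugacy class of $\sigma$, write $f_F(\mu)$ for its common value on permutations of cycle type $\mu$. Each class has $n!/z_\mu$ elements, so
\[
Z_F=\sum_{\mu\vdash n}f_F(\mu)\,\frac{p_\mu}{z_\mu}.
\]
The second step is the identity $(F\times G)[\sigma]=F[\sigma]\times G[\sigma]$, which acts componentwise on $F[U]\times G[U]$. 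A pair is fixed exactly when both components are fixed, so $f_{F\times G}(\mu)=f_F(\mu)f_G(\mu)$.

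The third step uses the definition of the Kronecker product on the power-sum basis, $p_\mu\star p_\nu=\delta_{\mu\nu}z_\mu p_\mu$; this matches the character definition in the introduction, since products of class functions are computed pointwise on conjugacy classes. With it,
\[
Z_F\star Z_G=\sum_{\mu,\nu}f_F(\mu)f_G(\nu)\frac{p_\mu\star p_\nu}{z_\mu z_\nu}=\sum_\mu f_F(\mu)f_G(\mu)\frac{p_\mu}{z_\mu}=Z_{F\times G}.
\]
Applying this to $F=X^n/H$ and $G=X^n/K$ gives (2).

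The only point needing care is the convention for $\star$ on power sums. In particular, one must check that the normalization $p_\mu\star p_\nu=\delta_{\mu\nu}z_\mu p_\mu$ is the one under which $h_n$ is the unit. This holds because $h_n=\sum_\mu p_\mu/z_\mu$, so $h_n\star p_\nu=p_\nu$, and it matches the pointwise product of characters. Once that is fixed, both parts are routine bookkeeping. As a cross-check of (2), one could compare with the Mackey-type decomposition of Proposition~\ref{propprod}(2), but the direct fixed-point argument avoids double cosets entirely.
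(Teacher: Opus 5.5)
The paper gives no proof of this proposition: it is imported from \cite{FGP} and stated without argument, so there is no internal proof to compare against. Your argument is correct and self-contained.

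For (1), you use the molecular description $X^{n+m}/(H\times K)$ from Proposition~\ref{propprod}(1) together with the closed formula for $Z_{X^n/H}$. This makes the factorization a one-line computation, but it rests on another cited result. The fully general alternative counts fixed points of $(F\cdot G)[\sigma]$ as a sum over $\sigma$-stable splittings $U=U_1+U_2$; it needs more bookkeeping but no molecular input.

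For (2), your fixed-point argument is the standard one: $\Fix$ of $F[\sigma]\times G[\sigma]$ is the product of the two fixed-point sets, combined with $p_\mu\star p_\nu=\delta_{\mu\nu}z_\mu p_\mu$. It proves more than is stated. You are right that transitivity of $H$ and $K$ is never used in either part.

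Two small remarks.

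First, you need not restrict to $m=n$. For $m\neq n$ the cartesian product is the zero species. With your convention, $\star$ also vanishes between different degrees, since $p_\mu\star p_\nu=0$ whenever $|\mu|\neq|\nu|$. So (2) holds there as $0=0$.

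Second, do not justify your normalization by saying it matches the introduction. Read literally, the pointwise product of $\mathrm{GL}_n(\mathbb{C})$-characters described there is the character of $V\otimes W$ under the diagonal action. That is the ordinary product $s_\alpha s_\beta$, not $s_\alpha\star s_\beta$, and under that reading (2) would be false. The correct justification is the one your parenthetical gestures at: multiply $S_n$-class functions pointwise and transport by the Frobenius characteristic $f\mapsto\sum_\mu z_\mu^{-1}f(\mu)p_\mu$. This gives exactly $p_\mu\star p_\nu=\delta_{\mu\nu}z_\mu p_\mu$, which is also consistent with the Garsia--Remmel examples in Section 5.
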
 
We will now study these three types of molecular species in turn: set molecules, cyclic molecules of the first kind, and cyclic molecules of the second kind.
\section{Molecules}
This section is dedicated to the definition of the molecular species that constitute the central objects of this work. We begin by defining set molecules, which provide the combinatorial framework for the classical results of Garsia and Remmel. Subsequently, we introduce two new families of molecules: cyclic molecules of the first kind and cyclic molecules of the second kind.
\subsection{Set molecules}
\begin{defin}
Let $\alpha = (\alpha_1,\alpha_2,\ldots,\alpha_m)$ be a partition of $n$. We define \textit{the species of lists of sets} of shape $\alpha$, denoted $\mathbf{E}_{\alpha}$, as the product of the species $\mathbf{E}_{\alpha_1}, \mathbf{E}_{\alpha_2},\cdots, \mathbf{E}_{\alpha_m}$, i.e.,
$$
\mathbf{E}_{\alpha} := X^n/S_{\alpha_1}\times \cdots \times X^n/S_{\alpha_m} = \mathbf{E}_{\alpha_1}\cdot \mathbf{E}_{\alpha_2} \cdots \mathbf{E}_{\alpha_m}.
$$
An $\mathbf{E}_{\alpha}$-structure on a set $U$ of size $n$ can be viewed as an equivalence class of sequences of words $$\overline{w_1}\,\overline{w_2}\,\ldots \overline{w_m} = \{(\sigma_1 w_1) \, (\sigma_2w_2)\, \ldots (\sigma_m w_m)\;|\; (\sigma_1,\sigma_2,\ldots, \sigma_m)\in S_{\alpha}\}$$
where each $w_i$ has length $\alpha_i$ for each $i\in[m]$.
\end{defin} 
In what follows, we will represent the $\mathbf{E}_{\alpha}$-structure as a word to facilitate a smoother discussion of the other molecules.
\begin{exemple}
Here are examples of $\mathbf{E}_{332}$-structures on $\{1,2,3,4,5,6,7,8\}$:
\begin{itemize}
	\item $\overline{134}\,\overline{528}\,\overline{57} = \overline{143}\,\overline{582}\,\overline{57} = \overline{413}\,\overline{825}\,\overline{75} = \cdots $,
	\item $\overline{528}\,\overline{134}\,\overline{57} = \overline{258}\,\overline{143}\,\overline{57} = \overline{825}\,\overline{314}\,\overline{75} = \cdots $.
\end{itemize}
\end{exemple}	

\subsection{Cyclic molecules of the first kind}
Throughout this section, given a partition $\alpha = (\alpha_1, \alpha_2, \ldots, \alpha_k)$ of an integer $n$, we define the \textit{standard permutation of shape $\alpha$}, denoted $\sigma_\alpha$, as the permutation in cycle notation obtained by filling the cycles corresponding to the parts of $\alpha$ with the elements of the set $[n] = \{1, 2, \ldots, n\}$ in increasing order. 
For example:
\begin{enumerate}
    \item If $\alpha = (4, 2, 1)$, then $n = 4+2+1 = 7$, and the standard permutation is:
    $$\sigma_{421} = (1, 2, 3, 4)(5, 6)(7)$$
    \item If $\alpha = (3, 3, 2)$, then $n = 3+3+2 = 8$, and the standard permutation is:
    $$\underline{332} = (1, 2, 3)(4, 5, 6)(7, 8).$$
\end{enumerate}
\begin{defin}
	Let $\alpha=(\alpha_1,\alpha_2,\ldots,\alpha_k)$ be a partition of $n$ and $\sigma_\alpha$ the standard permutation of shape $\alpha$. We define the species $\mathbf{C}_{\alpha}$ of lists invariant under the subgroup generated by $\sigma_\alpha$ as
	$$
	\mathbf{C}_{\alpha} = X^n/\langle \sigma_\alpha \rangle.
	$$ 
\end{defin}
Any permutation $\sigma$ of cycle type $\alpha$ yields a species isomorphic to $\mathbf{C}_{\alpha}$ by Lemma \ref{espconj}, i.e., $\mathbf{C}_{\alpha}=X^n/\langle \sigma \rangle$. Indeed, two cyclic subgroups generated by permutations are conjugate if and only if the underlying permutations have the same cycle type.
A $\mathbf{C}_{\alpha}$-structure on a set $U$ of size $n$ can be viewed as an equivalence class of sequences of words $\overline{[w_1]}\,\overline{[w_2]}\,\ldots \overline{(w_m)}=\{\sigma(w_1w_2\ldots w_m) \;|\; \sigma \in \langle\sigma_\alpha\rangle\}$ on $U$ of shape $\alpha$, where $w_i$ has length $\alpha_i$.
\begin{exemple}
For example, the object $\overline{[153]}\,\overline{[42]}\,\overline{[6]}=\{153426,531246,315426\}=\overline{[531]}\,\overline{[24]}\,\overline{[6]}=\overline{[315]}\,\overline{[42]}\,\overline{[6]}$ is a $\mathbf{C}_{321}$-structure on $\{1,2,3,4,5,6\}$.
\end{exemple}
\begin{prop}\label{iso1}
	Let $\alpha = (\alpha_1,\alpha_2,\ldots,\alpha_m)$ be a partition of $n$, and let $r\in\mathbb{N}$. We have the following isomorphism of species:
	\begin{equation*}
		\mathbf{C}_{\alpha}\circ X^r = \mathbf{C}_{\alpha^r}
	\end{equation*}
	where $\alpha^r = (\alpha_1, \ldots, \alpha_1, \alpha_2, \ldots, \alpha_2, \ldots, \alpha_m, \ldots, \alpha_m)$ is the partition of $nr$ containing exactly $r$ copies of each part $\alpha_i$.
\end{prop}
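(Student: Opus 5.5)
We will prove the isomorphism by identifying $\mathbf{C}_{\alpha}\circ X^r$ with a molecular species $X^{nr}/H$ for an explicit cyclic group $H\leq S_{nr}$. We then read off the cycle type of its generator and conclude with Lemma~\ref{espconj}. Throughout we take $r\geq 1$, since the substitution $F\circ X^r$ requires $X^r$ to have no structure on the empty set. We also write $\sigma\circ\lambda$ for composition of bijections, to avoid confusion with the substitution symbol $\circ$ on species.

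\emph{Step 1: the substitution as a quotient.} By definition, a $(\mathbf{C}_{\alpha}\circ X^r)$-structure on $U$, with $|U|=nr$, consists of three pieces of data: a partition $\pi$ of $U$ into $n$ blocks of size $r$, a linear order on each block, and a $\mathbf{C}_{\alpha}$-structure $\lambda\langle\sigma_\alpha\rangle$ on the set of blocks $\pi$. Identify $[nr]$ with $[n]\times[r]$ via $(i,j)\mapsto (i-1)r+j$. Let $\hat\sigma\in S_{nr}$ be the permutation $\hat\sigma(i,j)=(\sigma_\alpha(i),j)$. We will show that the map sending a bijection $\mu:[n]\times[r]\to U$ to the structure
\begin{itemize}
\item[] blocks $\{\mu(i,\cdot)\}_{i\in[n]}$, each ordered by $j$, together with the $\mathbf{C}_{\alpha}$-structure $(i\mapsto \mu(i,\cdot))\langle\sigma_\alpha\rangle$
\end{itemize}
induces a bijection $\{\mu\langle\hat\sigma\rangle\}\to(\mathbf{C}_{\alpha}\circ X^r)[U]$.

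For surjectivity, note that a structure determines $\mu$ once one representative $\lambda$ of the $\mathbf{C}_{\alpha}$-class is chosen: block $i$ is $\lambda(i)$, and $\mu(i,j)$ is its $j$-th element in the given order. Changing $\lambda$ to $\lambda\circ\sigma_\alpha^k$ changes $\mu$ to $\mu\circ\hat\sigma^k$, so the fibres are exactly the cosets $\mu\langle\hat\sigma\rangle$. This is the standard fact that $(X^n/H)\circ(X^r/\{e\}) = X^{nr}/(\{e\}\wr H)$, specialised to $H=\langle\sigma_\alpha\rangle$. The wreath product is then just the diagonal copy $\langle\hat\sigma\rangle$, because the fibre group $\{e\}$ contributes nothing.

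\emph{Step 2: naturality.} For a bijection $\tau:U\to V$, both sides transport by post-composition, namely $\mu\mapsto\tau\circ\mu$ on the left and relabelling of blocks and orders on the right. Hence the bijection of Step 1 commutes with transport, and we obtain $\mathbf{C}_{\alpha}\circ X^r = X^{nr}/\langle\hat\sigma\rangle$.

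\emph{Step 3: cycle type.} Each cycle $(a_1,\dots,a_{\alpha_i})$ of $\sigma_\alpha$ lifts under $\hat\sigma$ to the $r$ disjoint cycles $((a_1,j),\dots,(a_{\alpha_i},j))$ for $j\in[r]$. Hence $\lambda(\hat\sigma)=\alpha^r$. Since cyclic subgroups generated by permutations of the same cycle type are conjugate, Lemma~\ref{espconj} gives $X^{nr}/\langle\hat\sigma\rangle=X^{nr}/\langle\sigma_{\alpha^r}\rangle=\mathbf{C}_{\alpha^r}$.

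The main obstacle is Step 1. One must check carefully that the fibres of the map are exactly the right cosets $\mu\langle\hat\sigma\rangle$: no extra identifications may arise from the linear orders, and none may be lost. This works precisely because $X^r$ has trivial automorphism group. As a sanity check, the cycle indices should agree: $Z_{\mathbf{C}_\alpha}\circ p_1^r$ (plethysm) equals $\frac{1}{o}\sum_k p_{\lambda(\sigma_\alpha^k)}\circ p_1^r$. Since $p_\ell\circ p_1^r=p_\ell^r$, this is $\frac1o\sum_k p_{\lambda(\hat\sigma^k)}$, which matches the identity $\mathbf{C}_{\alpha}(\mathbf{z})(p_1^r)=\mathbf{C}_{\alpha^r}(\mathbf{z})$ announced in the introduction.
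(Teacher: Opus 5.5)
Your proof is correct, but it is organised differently from the paper's. The paper writes down one explicit map $T_U:\mathbf{C}_{\alpha^r}[U]\to(\mathbf{C}_\alpha\circ X^r)[U]$ on words. It bundles the $k$-th letters of the $r$ words of length $\alpha_i$ into the $k$-th $r$-tuple of a cyclic word of length $\alpha_i$, declares the map ``clearly a bijection'', and checks naturality. You instead split the argument in two. First you realise $\mathbf{C}_\alpha\circ X^r$ as the molecular species $X^{nr}/\langle\hat\sigma\rangle$, which is the case $K=\{e\}$ of $(X^n/H)\circ(X^r/K)=X^{nr}/(K\wr H)$. Then you pass to $\mathbf{C}_{\alpha^r}$ by computing $\lambda(\hat\sigma)=\alpha^r$ and invoking conjugacy together with Lemma~\ref{espconj}. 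The permutation conjugating $\sigma_{\alpha^r}$ into $\hat\sigma$ is essentially the paper's transposition relabelling, so the two isomorphisms agree up to this factorisation.

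The paper's version buys concreteness: an explicit formula on words. Yours buys rigour and generality. Your fibre computation, where replacing $\lambda$ by $\lambda\circ\sigma_\alpha^k$ corresponds exactly to replacing $\mu$ by $\mu\circ\hat\sigma^k$, is precisely the verification that the transposition is well defined on orbits and injective, which the paper skips. The same argument also shows $(X^n/H)\circ X^r=X^{nr}/\hat H$ for every $H\le S_n$, with $\hat H$ the diagonal copy of $H$ acting on $[n]\times[r]$. Taking $H=G_\alpha$ gives $\mathbf{K}_\alpha\circ X^r=\mathbf{K}_{\alpha^r}$ at once, the companion identity announced in the introduction. Finally, your restriction to $r\ge 1$ correctly refines the paper's $r\in\mathbb{N}$, since $X^0=1$ cannot be substituted.
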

\begin{proof}
	We construct a natural isomorphism $T : \mathbf{C}_{\alpha^r} \to \mathbf{C}_{\alpha} \circ X^r$.
	
	Let $U$ be a finite set of size $nr$. A $\mathbf{C}_{\alpha^r}[U]$-structure is the equivalence class (under the action of the standard permutation of type $\alpha^r$) of a concatenation of $m \cdot r$ words. We can group these as $m$ blocks, each containing $r$ words of length $\alpha_i$. Let $s \in \mathbf{C}_{\alpha^r}[U]$ be such a structure:
	$$
	s = \prod_{i=1}^m \left( \prod_{j=1}^r \overline{[w_{j,i}^{1}w_{j,i}^{2}\ldots w_{j,i}^{\alpha_i}]} \right)
	$$
	A $(\mathbf{C}_{\alpha} \circ X^r)[U]$-structure is a $\mathbf{C}_{\alpha}$-structure (a word of $m$ "super-letters") where each "super-letter" is an $X^r$-structure (an $r$-tuple of elements from $U$).	
	We define the transformation $T_U : \mathbf{C}_{\alpha^r}[U] \to (\mathbf{C}_{\alpha} \circ X^r)[U]$ by "transposing" the elements. The map takes the $k$-th element from each of the $r$ words of length $\alpha_i$ and groups them into an $r$-tuple. This forms the $k$-th "super-letter" of the new $\mathbf{C}_{\alpha}$-structure.
	Formally, $T_U$ maps $s$ to:
	\begin{equation*}
		T_U(s) = \prod_{i=1}^m \overline{ \left[\left(\prod_{j=1}^r w_{j,i}^1\right) \left(\prod_{j=1}^r w_{j,i}^2\right) \ldots \left(\prod_{j=1}^r w_{j,i}^{\alpha_i}\right)\right] }
	\end{equation*}
	This map is clearly a bijection, as it simply regroups the elements of $U$.
	To prove $T$ is a natural isomorphism, we must verify its naturality. For any bijection $\sigma: U \to V$, we show that $T_V \circ \mathbf{C}_{\alpha^r}[\sigma] = (\mathbf{C}_{\alpha} \circ X^r)[\sigma] \circ T_U$.
	\noindent\textbf{LHS:} We first apply $\mathbf{C}_{\alpha^r}[\sigma]$ to $s$, which permutes all labels:
	$$
	\mathbf{C}_{\alpha^r}[\sigma](s) = \prod_{i=1}^m \left( \prod_{j=1}^r \overline{\left[\sigma(w_{j,i}^{1})\sigma(w_{j,i}^{2})\ldots \sigma(w_{j,i}^{\alpha_i})\right]} \right)
	$$
	Applying $T_V$ to this new structure in $V$ gives:
	$$
	T_V(\mathbf{C}_{\alpha^r}[\sigma](s)) = \prod_{i=1}^m \overline{\left[ \left(\prod_{j=1}^r \sigma(w_{j,i}^1)\right) \left(\prod_{j=1}^r \sigma(w_{j,i}^2)\right) \ldots \left(\prod_{j=1}^r \sigma(w_{j,i}^{\alpha_i})\right) \right]}
	$$
	
	\noindent\textbf{RHS:} We first apply $T_U$ to $s$:
	$$
	T_U(s) = \prod_{i=1}^m \overline{\left[ \left(\prod_{j=1}^r w_{j,i}^1\right) \left(\prod_{j=1}^r w_{j,i}^2\right) \ldots \left(\prod_{j=1}^r w_{j,i}^{\alpha_i}\right)\right] }
	$$
	The action $(\mathbf{C}_{\alpha} \circ X^r)[\sigma]$ permutes the underlying elements within each $r$-tuple (each $X^r$-structure):
	\begin{align*}
		(\mathbf{C}_{\alpha} \circ X^r)[\sigma](T_U(s)) &= \prod_{i=1}^m \overline{\left[ \sigma\left(\prod_{j=1}^r w_{j,i}^1\right) \sigma\left(\prod_{j=1}^r w_{j,i}^2\right) \ldots \sigma\left(\prod_{j=1}^r w_{j,i}^{\alpha_i}\right) \right]} \\
		&= \prod_{i=1}^m \overline{ \left[\left(\prod_{j=1}^r \sigma(w_{j,i}^1)\right) \left(\prod_{j=1}^r \sigma(w_{j,i}^2)\right) \ldots \left(\prod_{j=1}^r \sigma(w_{j,i}^{\alpha_i})\right) \right]}
	\end{align*}
	Since LHS = RHS, the transformation $T$ is natural. As $T_U$ is a bijection for all $U$, $T$ is a natural isomorphism.
\end{proof}
In terms of series, we have the following analogous result.
\begin{cor}
Let $\alpha = (\alpha_1,\alpha_2,\ldots,\alpha_m)$ be a partition and $r\in\mathbb{N}$. We have the isomorphism
\begin{equation*}
	\mathbf{C}_{\alpha}(\mathbf{z})\circ p_1^r = \mathbf{C}_{\alpha^r}(\mathbf{z}).
\end{equation*}
\end{cor}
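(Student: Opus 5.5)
Two routes are available; I take the first and use the second as a check. The first route deduces the identity from Proposition \ref{iso1}. Isomorphic species have equal cycle index series, so $Z_{\mathbf{C}_{\alpha}\circ X^r}=Z_{\mathbf{C}_{\alpha^r}}=\mathbf{C}_{\alpha^r}(\mathbf{z})$. It then suffices to show that the cycle index of a substitution into $X^r$ is the plethysm $Z_{\mathbf{C}_\alpha}\circ Z_{X^r}$, and that $Z_{X^r}=p_1^r$. The second fact is immediate: only the identity permutation fixes a linear order, so the only surviving term of the cycle index is $\frac{1}{r!}\cdot r!\,p_1^r$. The first is the standard substitution theorem for cycle index series, $Z_{F\circ G}=Z_F\circ Z_G$ when $G(0)=0$, which I would quote from \cite{FGP}. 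This gives the Corollary, with $\circ$ read as plethysm.

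Since the substitution theorem is not stated earlier in the paper, I would also give a direct computation as a safeguard. Plethysm by $p_1^r$ sends $p_k$ to $p_k[p_1^r]=p_k^r$, and it is multiplicative, so $p_\lambda\circ p_1^r=p_{\lambda^r}$, where $\lambda^r$ repeats each part $r$ times. By the earlier example,
\[
\mathbf{C}_\alpha(\mathbf{z})\circ p_1^r=\frac{1}{o(\sigma_\alpha)}\sum_{k=1}^{o(\sigma_\alpha)} p_{\lambda(\sigma_\alpha^k)^r}.
\]
On the other side, $\sigma_{\alpha^r}$ has the same order $o(\sigma_\alpha)=\mathrm{lcm}(\alpha_i)$. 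For each $k$, the power $\sigma_{\alpha^r}^k$ is, up to relabelling, $r$ disjoint copies of $\sigma_\alpha^k$. Hence $\lambda(\sigma_{\alpha^r}^k)=\lambda(\sigma_\alpha^k)^r$, and the two sums agree term by term.

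The main obstacle is the first route's reliance on the plethystic substitution theorem, which lies outside what has been established in the paper. The elementary computation avoids it entirely. That computation needs only two checks: that a $k$-th power of a cycle of length $a$ splits into $\gcd(a,k)$ cycles of length $a/\gcd(a,k)$, applied to each copy separately, and that the orders of $\sigma_\alpha$ and $\sigma_{\alpha^r}$ coincide.
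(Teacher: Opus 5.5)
Your proposal is correct, and your primary route is the paper's own: it passes to cycle index series in the species isomorphism $\mathbf{C}_\alpha\circ X^r=\mathbf{C}_{\alpha^r}$ of the preceding proposition, and the paper leaves implicit the two facts $Z_{F\circ G}=Z_F\circ Z_G$ and $Z_{X^r}=p_1^r$ that you state. Your direct check via $p_\lambda\circ p_1^r=p_{\lambda^r}$ and $\lambda(\sigma_{\alpha^r}^k)=\lambda(\sigma_\alpha^k)^r$ is also sound, and it proves the identity without the substitution theorem or the species isomorphism.
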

\begin{proof}
This follows directly by passing to the cycle index series of the respective species.
\end{proof}
\subsection{Cyclic molecules of the second kind}
Let $\alpha = (i_1^{\alpha_1},i_2^{\alpha_2},\ldots, i_m^{\alpha_m})$ be a partition of $n$. We define the subgroup $G_{\alpha}$ of $S_n$ as
 \begin{align*}G_{\alpha} &= \langle \sigma_{i_m^{\alpha_{m}}} \rangle\times \langle \sigma_{{(i_{m-1})}^{\alpha_{m-1}}} \rangle \times \cdots \times \langle \sigma_{i_1^{\alpha_1}} \rangle.
 \end{align*}
For example, $G_{43322} = \langle(1,2,3,4)\rangle \times \langle (5,6,7)(8,9,10) \rangle \times \langle (11,12)(13,14)\rangle$.
\begin{defin}
Let $\alpha = (i_1^{\alpha_1},i_2^{\alpha_2},\ldots, i_m^{\alpha_m})$ be a partition of $n$. We define the species $\mathbf{K}_{\alpha}$ of structures invariant under the subgroup $G_{\alpha}$ as
$$
\mathbf{K}_{\alpha} = X^n/G_{\alpha}= \mathbf{C}_{{i_m}^{\alpha_m}}\cdot \mathbf{C}_{{m-1}^{\alpha_{m-1}}} \ldots \mathbf{C}_{{i_1}^{\alpha_1}}.
$$
\end{defin}
A $\mathbf{K}_{\alpha}$-structure is thus a sequence of $\mathbf{C}_{i_j^{\alpha_j}}$-structures. While it is common to represent a product of structures as a tuple, in this work we will separate these individual structures using a "$\cdot$" symbol. For example, here are two $\mathbf{K}_{2^{2} 4}$-structures:
\begin{itemize}
	\item $\overline{[23]}\,\overline{[15]} \cdot \overline{[4687]}$,
	\item $\overline{[18]}\, \overline{[27]} \cdot \overline{[6453]}$.
\end{itemize}
\begin{prop}
	We have 
	$$
	\mathbf{K}_{\alpha} = \mathbf{C}_{i_1}(X^{\alpha_1}) \cdot \mathbf{C}_{i_2}(X^{\alpha_2})\cdots \mathbf{C}_{i_m}(X^{\alpha_m}).
	$$
\end{prop}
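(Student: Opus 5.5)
The identity follows by combining the definition of $\mathbf{K}_{\alpha}$ as a product with Proposition~\ref{iso1}, applied factor by factor. By definition, $\mathbf{K}_{\alpha} = X^n/G_{\alpha}$ with $G_{\alpha}$ the direct product of the cyclic groups $\langle \sigma_{i_j^{\alpha_j}}\rangle$, each acting on its own block of consecutive integers. Part (1) of Proposition~\ref{propprod}, applied inductively on the number $m$ of distinct part sizes, splits this molecule as
$$
\mathbf{K}_{\alpha} = \mathbf{C}_{i_m^{\alpha_m}}\cdot \mathbf{C}_{i_{m-1}^{\alpha_{m-1}}}\cdots \mathbf{C}_{i_1^{\alpha_1}}.
$$
Up to the relabelling of blocks, each factor $X^{i_j\alpha_j}/\langle \sigma_{i_j^{\alpha_j}}\rangle$ is the molecule attached to a permutation of cycle type $i_j^{\alpha_j}$. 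By Lemma~\ref{espconj} it is therefore exactly $\mathbf{C}_{i_j^{\alpha_j}}$, independently of which block of $[n]$ it occupies. Since the product of species is commutative up to natural isomorphism, the order of the factors does not matter.

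The key step is Proposition~\ref{iso1} with the one-part partition $(i_j)$ and $r=\alpha_j$. This gives $\mathbf{C}_{i_j}\circ X^{\alpha_j} = \mathbf{C}_{(i_j)^{\alpha_j}} = \mathbf{C}_{i_j^{\alpha_j}}$, since repeating the single part $i_j$ exactly $\alpha_j$ times yields the partition $i_j^{\alpha_j}$. Writing $\mathbf{C}_{i_j}(X^{\alpha_j})$ for $\mathbf{C}_{i_j}\circ X^{\alpha_j}$ and substituting each factor gives the stated formula.

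The only point needing care is the notational convention. The statement's $\mathbf{C}_{i}(X^{r})$ must be read as the composition $\mathbf{C}_{i}\circ X^{r}$, as in Proposition~\ref{iso1}. It must not be read as $\mathbf{C}_{i}$ evaluated on sets of size $r$, nor as the partitional substitution $\mathbf{C}_{i}\circ \mathbf{E}_r$. As a cross-check, I would verify the identity on cycle indices. The cycle index of $\mathbf{C}_{i}\circ X^r$ is $Z_{\mathbf{C}_i}\circ p_1^r = \frac{1}{i}\sum_{k=1}^{i} p_{\lambda(c^k)}\circ p_1^r$, where $c$ is an $i$-cycle. Plethysm turns each power sum $p_{d}^{i/d}$ into $p_d^{r i/d}$. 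This is precisely $p_{\lambda(\tau^k)}$ for $\tau$ a product of $r$ disjoint $i$-cycles, because $\tau^k$ has cycle type $(i/\gcd(i,k))^{r\gcd(i,k)}$. This matches $\frac{1}{o(\tau)}\sum_k p_{\lambda(\tau^k)} = Z_{\mathbf{C}_{i^r}}$. I expect no real obstacle beyond making this bookkeeping, and the block relabelling used in the first step, explicit.
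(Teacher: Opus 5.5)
Your proposal is correct and follows the paper's proof. Both apply Proposition~\ref{iso1} to the one-part partition $(i_j)$ with $r=\alpha_j$, obtaining $\mathbf{C}_{i_j^{\alpha_j}}=\mathbf{C}_{i_j}(X^{\alpha_j})$, and substitute this into the product definition of $\mathbf{K}_\alpha$. Your extra steps are sound but only make explicit what the paper leaves implicit: Proposition~\ref{propprod}(1) with block relabelling to split $X^n/G_\alpha$, commutativity of the product to reverse the order of the factors, and the cycle-index cross-check.
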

\begin{proof}
Proposition \ref{iso1} ensures that $\mathbf{C}_{i_j^{\alpha_j}} = \mathbf{C}_{i_j}(X^{\alpha_j})$ for all $j\in[m]$. The result then follows by substituting these $\mathbf{C}_{i_j^{\alpha_j}}$ with $\mathbf{C}_{i_j}(X^{\alpha_j})$ in the definition of $\mathbf{K}_{\alpha}$.
\end{proof}
The following lemma establishes a key uniqueness property for these three families of molecular species, relating their isomorphism class to their cycle index.
\begin{lem}\label{specyc}
	For these families, we have the following equivalences:
	\begin{enumerate}
		\item $\mathbf{E}_{\lambda} = \mathbf{E}_{\mu} \; \Leftrightarrow\; \mathrm{Z}_{\mathbf{E}_{\lambda}} = \mathrm{Z}_{\mathbf{E}_{\mu}} \; \Leftrightarrow \; \lambda = \mu.$
		
		\item $\mathbf{C}_{\lambda} = \mathbf{C}_{\mu} \; \Leftrightarrow\; \mathrm{Z}_{\mathbf{C}_{\lambda}} = \mathrm{Z}_{\mathbf{C}_{\mu}} \; \Leftrightarrow \; \lambda = \mu.$
		
		\item $\mathbf{K}_{\lambda} = \mathbf{K}_{\mu} \; \Leftrightarrow\; \mathrm{Z}_{\mathbf{K}_{\lambda}} = \mathrm{Z}_{\mathbf{K}_{\mu}} \; \Leftrightarrow \; \lambda = \mu.$
		
		\item As a consequence, the number of distinct isomorphism classes (and distinct cycle indices) in each of the three sets $\{\mathbf{E}_{\alpha}\}_{\alpha\vdash n}$, $\{\mathbf{C}_{\alpha}\}_{\alpha\vdash n}$, and $\{\mathbf{K}_{\alpha}\}_{\alpha\vdash n}$ is equal to $p(n)$, the number of partitions of $n$.
	\end{enumerate}    
\end{lem}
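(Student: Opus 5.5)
For each of the three families I will prove the cycle of implications
\[
\lambda=\mu \;\Rightarrow\; \text{species isomorphic} \;\Rightarrow\; \text{equal cycle indices} \;\Rightarrow\; \lambda=\mu .
\]
The first arrow is trivial, since equal partitions give the same defining subgroup. The second arrow holds because the cycle index series is an isomorphism invariant: by definition $Z_F$ is computed from the numbers $\#\Fix F[\sigma]$, and these are preserved under natural isomorphism. So all the work is in the last arrow, and my plan is to read $\lambda$ off from the expansion of the cycle index in the power-sum basis $\{p_\nu\}$. Here I use that the $p_\nu$ are linearly independent over $\mathbb{Q}$.

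\textbf{Case (1).} By the Example we have $Z_{\mathbf{E}_\lambda}=h_\lambda=\prod_i \frac{1}{\lambda_i!}\sum_{\sigma\in S_{\lambda_i}}p_{\lambda(\sigma)}$. I look at the coefficient of a power sum $p_\nu$, and in particular at which $p_\nu$ with a single long part can appear. The clean invariant is the \emph{finest} type occurring with nonzero coefficient: the support of $h_\lambda$ consists exactly of the types $\nu$ obtained by refining-then-merging within the blocks of $\lambda$. Equivalently, $\nu$ is a union of partitions $\nu^{(i)}\vdash\lambda_i$. The maximal element of this support in dominance order is $\lambda$ itself, coming from the $\lambda_i$-cycles, and it is unique. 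So equal $h$'s force equal $\lambda$. Alternatively, I can simply invoke the classical fact that $\{h_\lambda\}$ is a basis.

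\textbf{Case (2).} We have $Z_{\mathbf{C}_\lambda}=\frac{1}{o}\sum_{k=1}^{o}p_{\lambda(\sigma_\lambda^k)}$ with $o=\mathrm{lcm}(\lambda_i)$. Each power $\sigma^k$ splits a $\lambda_i$-cycle into $\gcd(k,\lambda_i)$ cycles of length $\lambda_i/\gcd(k,\lambda_i)$, so $\lambda(\sigma^k)$ is a refinement of $\lambda$. It equals $\lambda$ exactly when $\gcd(k,\lambda_i)=1$ for all $i$, for instance when $k=1$. Hence every $p_\nu$ in the support satisfies $\nu\le\lambda$ in the refinement order, and hence in dominance order. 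The coefficient of $p_\lambda$ is strictly positive because all coefficients are nonnegative. This identifies $\lambda$ as the unique dominance-maximal type in the support. If $Z_{\mathbf{C}_\lambda}=Z_{\mathbf{C}_\mu}$, then the supports coincide, so $\lambda=\mu$. Since all coefficients are nonnegative, no cancellation can occur, and that is what makes the support argument legitimate.

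\textbf{Case (3).} The same argument works via Example (3) and multiplicativity. We have $Z_{\mathbf{K}_\lambda}=\prod_j Z_{\mathbf{C}_{i_j^{\alpha_j}}}$, and each factor has nonnegative support consisting of refinements of $i_j^{\alpha_j}$, containing $i_j^{\alpha_j}$ itself. The product of power sums gives $p_{\nu^{(1)}\cup\cdots\cup\nu^{(m)}}$, so the support consists of refinements of $\lambda$ and contains $p_\lambda$ with positive coefficient. Again $\lambda$ is the unique maximal element.

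\textbf{Point (4).} This follows immediately: each map $\alpha\mapsto$ species and $\alpha\mapsto$ cycle index is injective on partitions of $n$, so each family has exactly $p(n)$ classes. The main point needing care is the claim that the maximal element of the support is $\lambda$. It requires the observation that refinements of $\lambda$ are dominated by $\lambda$, together with nonnegativity of all coefficients. That is routine, but it is the crux, and it also sets up the triangularity used later in Section 4.
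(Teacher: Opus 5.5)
Your proof is correct, but it takes a genuinely different route from the paper's.

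\textbf{The paper's route.} The paper works with subgroups. Via Lemma~\ref{espconj} it reduces $\mathbf{C}_\lambda=\mathbf{C}_\mu$ to conjugacy of $\langle\sigma_\lambda\rangle$ and $\langle\sigma_\mu\rangle$. In (3) it simply assumes that $G_\lambda$ and $G_\mu$ are conjugate only when $\lambda=\mu$. For the cycle indices it gives a real argument only in case (1), where it quotes that $\{h_\lambda\}$ is a basis. In cases (2) and (3) it passes from injectivity on species to injectivity on cycle indices. That inference is not valid, since non-isomorphic molecular species can share a cycle index.

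\textbf{Your route.} You avoid subgroup conjugacy altogether by closing the loop $\lambda=\mu\Rightarrow$ isomorphic species $\Rightarrow$ equal cycle indices $\Rightarrow\lambda=\mu$. You prove the last, genuinely nontrivial arrow from the power-sum expansion. Every type in the support is a refinement of $\lambda$, and $p_\lambda$ itself occurs with coefficient $\phi(o)/o>0$ (respectively $\prod_j\phi(i_j)/i_j>0$), with no cancellation possible.

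\textbf{What each buys.} Your version makes the lemma self-contained. It supplies exactly the step the paper leaves unjustified, including all of (3). It is also the same triangularity used in Section~4, which on its own already gives the basis property. The paper's route, once completed, would add the group-theoretic fact that the conjugacy class of $\langle\sigma\rangle$ determines the cycle type of $\sigma$. Your argument recovers that fact as a corollary, via the easy direction of Lemma~\ref{espconj}.

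\textbf{Two small remarks.} First, in case (1) the phrase ``refining-then-merging'' is inaccurate. The support of $h_\lambda=\prod_i h_{\lambda_i}$ consists exactly of the refinements of $\lambda$, as your ``equivalently'' sentence correctly says. Second, you do not need dominance order at all. Equal supports make $\mu$ a refinement of $\lambda$ and $\lambda$ a refinement of $\mu$, and comparing numbers of parts then forces $\lambda=\mu$.
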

\begin{proof}
	The cycle index $\mathrm{Z}_S$ of a species $S$ is an isomorphism invariant, so $S = T \implies \mathrm{Z}_S = \mathrm{Z}_T$. The non-trivial part of (1), (2), and (3) is the reverse implication. We rely on the standard result (Lemma \ref{espconj}), which states that for molecular species, $X^n/H = X^n/K \Leftrightarrow H \text{ and } K \text{ are conjugate subgroups}$.	
	\begin{enumerate}
		\item[\textbf{(1)}] For $\mathbf{E}_\lambda = X^n/S_\lambda$, we have $\mathbf{E}_\lambda = \mathbf{E}_\mu \Leftrightarrow S_\lambda \simeq S_\mu$ by Lemma \ref{espconj}. The Young subgroups $S_\lambda$ and $S_\mu$ are conjugate if and only if $\lambda = \mu$. Separately, the cycle indices $\mathrm{Z}_{\mathbf{E}_\lambda} = h_\lambda$ (the complete homogeneous symmetric functions) form a basis for symmetric functions of degree $n$, so $\mathrm{Z}_{\mathbf{E}_\lambda} = \mathrm{Z}_{\mathbf{E}_\mu} \Leftrightarrow h_\lambda = h_\mu \Leftrightarrow \lambda = \mu$. All three conditions are equivalent.		
		\item[\textbf{(2)}] For $\mathbf{C}_\lambda = X^n/\langle \sigma_\lambda \rangle$, we first show it is well-defined. If $\sigma$ and $\tau$ both have cycle type $\lambda$, they are conjugate, i.e., $\tau = g \sigma g^{-1}$ for some $g \in S_n$. This implies their generated cyclic subgroups are also conjugate: $\langle \tau \rangle = \langle g \sigma g^{-1} \rangle = g \langle \sigma \rangle g^{-1}$. By Lemma \ref{espconj}, $X^n/\langle \sigma \rangle = X^n/\langle \tau \rangle$. Thus, the isomorphism class $\mathbf{C}_\lambda$ depends only on $\lambda$.
		The proof of (1) shows $\mathrm{Z}_{\mathbf{C}_\lambda} = \mathrm{Z}_{\mathbf{C}_\mu} \Leftrightarrow \lambda = \mu$ is equivalent to $\mathbf{C}_\lambda = \mathbf{C}_\mu \Leftrightarrow \lambda = \mu$. This relies on the fact that the conjugacy class of a cyclic subgroup in $S_n$ is uniquely determined by the cycle type of its generators. Since there are $p(n)$ such cycle types, there are $p(n)$ such conjugacy classes.		
		\item[\textbf{(3)}] The proof for $\mathbf{K}_\lambda = X^n/G_{\sigma_\lambda}$ follows the same logic. We assume $G_{\sigma_\lambda}$ is well-defined, i.e., $\lambda(\sigma) = \lambda(\tau) \implies G_\sigma \simeq G_\tau$. Then, $\mathbf{K}_\lambda = \mathbf{K}_\mu \Leftrightarrow G_{\sigma_\lambda} \simeq G_{\sigma_\mu}$. We also assume this happens if and only if $\lambda = \mu$. This gives $\mathbf{K}_\lambda = \mathbf{K}_\mu \Leftrightarrow \lambda = \mu$, which implies $\mathrm{Z}_{\mathbf{K}_\lambda} = \mathrm{Z}_{\mathbf{K}_\mu} \Leftrightarrow \lambda = \mu$.
		\item[\textbf{(4)}] From (1), (2), and (3), we have shown that for each family, the map $\lambda \mapsto \mathrm{Z}_{\text{Species}_\lambda}$ is a bijection from the set of partitions of $n$ to the set of distinct cycle indices. Since the size of the set of partitions of $n$ is $p(n)$, the number of distinct cycle indices in each family is $p(n)$.
	\end{enumerate}
\end{proof}
In other words, equality between these species is equivalent to the equality of their cycle index series. In general, this property does not hold for arbitrary molecular species.

\section{Basis of symmetric functions}
In the previous section, we introduced the families of symmetric functions corresponding to the cycle index series $\{Z_{\mathbf{C}_{\alpha}}\}_{\alpha\vdash n}$ and $\{Z_{\mathbf{K}_{\alpha}}\}_{\alpha\vdash n}$. In this section, we prove that these families constitute $\mathbb{Q}$-basis for the space of homogeneous symmetric functions $\mathbb{Q}\otimes \Lambda_n$. Furthermore, we investigate the relationship between these new basis and the classical ones, specifically establishing the transition matrices to the power sum, monomial, complete homogeneous, and Schur functions.   
\begin{defin}
	Let $\alpha = (a_1, a_2, \ldots, a_k) = (i_1^{\alpha_1}, i_2^{\alpha_2}, \ldots, i_m^{\alpha_m})$ be a partition of an integer $n$. We associate three symmetric functions with $\alpha$, corresponding to the cycle index series of the set molecule, and the cyclic molecules of the first and second kind, defined as follows:
	\begin{itemize}
		\item The homogeneous symmetric function
		\[
		h_{\alpha}(\mathbf{z})
		:= Z_{\mathbf{E}_{\alpha}}
		= Z_{\mathbf{E}_{a_1}} \cdot Z_{\mathbf{E}_{a_2}} \cdots Z_{\mathbf{E}_{a_k}}.
		\]
		\item The cyclic symmetric function of the first kind
		\[
		\mathbf{C}_{\alpha}(\mathbf{z})
		:= Z_{\mathbf{C}_{\alpha}}
		= \frac{1}{o(\sigma)} \sum_{g \in \langle \sigma \rangle} p_{\lambda(g)}(\mathbf{z}) 
		\]
		where $\sigma$ is a permutation of cycle type $\alpha$ and $o(\sigma)$ denotes the order of the permutation $\sigma$.
		\item The cyclic symmetric function of the second kind
		\[
		\mathbf{K}_{\alpha}(\mathbf{z})
		:= Z_{\mathbf{K}_{\alpha}}
		= \mathbf{C}_{i_1^{\alpha_{1}}}(\mathbf{z}) \cdot \mathbf{C}_{i_2^{\alpha_{2}}}(\mathbf{z}) \cdots \mathbf{C}_{i_m^{\alpha_{m}}}(\mathbf{z}).
		\]
		
	\end{itemize}
\end{defin}
By Lemma \ref{espconj}, the symmetric function $\mathbf{C}_{\alpha}(\mathbf{z})$ does not depend on the choice of $\sigma$. It is worth noting that $\mathbf{K}_{\alpha}$ should not be confused with the Lyndon symmetric functions $\mathrm{L}_{\alpha}$ (see \cite{GesselReutenauer1993}); although related, they differ in their power-sum expansions: the former involves the Euler totient function $\phi$, while the latter involves the Möbius function $\mu$.
\begin{prop}
	We have 
	$$\mathbf{C}_{\alpha}(\mathbf{z})= \frac{1}{o(\sigma)}\sum_{k|o(\sigma)} \phi(k)p_{\alpha^{(o(\sigma)/k)}}(\mathbf{z})$$
where $\alpha^{(o(\sigma)/k)}$ denotes the cycle type of the permutation $\sigma^{o(\sigma)/k}$, a power of $\sigma$.
\end{prop}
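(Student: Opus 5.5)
We start from the definition $\mathbf{C}_{\alpha}(\mathbf{z}) = \frac{1}{o(\sigma)}\sum_{g\in\langle\sigma\rangle} p_{\lambda(g)}(\mathbf{z})$, where $\langle\sigma\rangle$ is cyclic of order $N := o(\sigma)$. The plan is to group the $N$ elements $\sigma^j$, $0\le j<N$, according to the order of the element, and to show that the cycle type depends only on that order.

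\textbf{Step 1 (grouping by order).} In a cyclic group of order $N$, the element $\sigma^j$ has order $k = N/\gcd(j,N)$, and $k$ divides $N$. For each divisor $k$ of $N$ there are exactly $\phi(k)$ elements of order $k$. These elements all generate the same cyclic subgroup, namely the unique subgroup of order $k$, which is $\langle \sigma^{N/k}\rangle$.

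\textbf{Step 2 (cycle type depends only on the order).} We show that every generator $\sigma^{(N/k)t}$ of $\langle\sigma^{N/k}\rangle$, with $\gcd(t,k)=1$, has the same cycle type as $\sigma^{N/k}$. Write $\tau = \sigma^{N/k}$. Raising a single cycle of length $\ell$ to a power $t$ splits it into $\gcd(\ell,t)$ cycles of length $\ell/\gcd(\ell,t)$. Every cycle of $\tau$ has length $\ell$ dividing $k = o(\tau)$. Since $\gcd(t,k)=1$, we get $\gcd(\ell,t)=1$, so $\tau^t$ has the same cycle type as $\tau$.

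Alternatively, one can argue more simply: $\tau^t$ and $\tau$ generate the same group, and each is a power of the other. Hence they have the same orbits on $[n]$, and for each point $x$ the orbit of $x$ under $\langle\tau\rangle = \langle\tau^t\rangle$ is the cycle containing $x$ in both permutations. The cycle type is therefore determined by the orbit partition of the subgroup.

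\textbf{Step 3 (collect terms).} Combining the two steps,
$$\sum_{g\in\langle\sigma\rangle} p_{\lambda(g)} = \sum_{k\mid N}\phi(k)\, p_{\lambda(\sigma^{N/k})},$$
which is the claimed formula with $\alpha^{(N/k)} = \lambda(\sigma^{N/k})$. This is consistent with $\sum_{k\mid N}\phi(k) = N$.

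The only point needing care is Step 2, the invariance of cycle type under the choice of generator of the subgroup. I would present it via the orbit argument, which avoids gcd bookkeeping. One can also record that explicitly $\alpha^{(d)}$ replaces each part $a_i$ by $\gcd(a_i,d)$ copies of $a_i/\gcd(a_i,d)$, although this is not needed for the statement.
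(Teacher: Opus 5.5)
Your proof is correct, and its core is the same as the paper's: both group the $N=o(\sigma)$ elements of $\langle\sigma\rangle$ by order, with $\phi(k)$ elements of each order $k\mid N$. Where you differ is in how a cycle type gets attached to each group of terms.

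The paper argues that the regular action of $\langle\sigma\rangle$ on its own powers is equivalent to that of $\langle(1,2,\ldots,N)\rangle$ on $[N]$. It then ``substitutes'' $\langle\sigma\rangle$ into the classical cycle index $\frac{1}{N}\sum_{k\mid N}\phi(k)p_k^{N/k}$ of the cyclic group. That transport only carries the abstract count of elements of each order. It says nothing by itself about cycle types on $[n]$, which is where $p_{\lambda(g)}$ lives. For the $N$-cycle this is harmless, because the type $(k^{N/k})$ of an element is determined by its order. For a general $\sigma$ of type $\alpha$, however, one needs exactly your Step~2: every generator of $\langle\sigma^{N/k}\rangle$ has the cycle type of $\sigma^{N/k}$.

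Your orbit argument supplies this cleanly: the cycles of a permutation $\pi$ are the orbits of $\langle\pi\rangle$, so cycle type is an invariant of the cyclic subgroup. Your version is therefore more elementary and fully self-contained, while the paper's is shorter but leaves this step implicit.
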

\begin{proof}
The action of $\langle \sigma \rangle$ on the set of powers $\{\sigma^1,\sigma^2,\cdots,\sigma^{o(\sigma)}=\Id\}$ is equivalent to the action of the cyclic group generated by $(1,2,\cdots,o(\sigma))$ on the set $\{1,2,\ldots,o(\sigma)\}$. Recall that the cycle index series of $X^n/\langle (1,2,\ldots,o(\sigma))\rangle$ is given by
$$
\frac{1}{o(\sigma)}\sum_{h\in \langle (1,2,\ldots,o(\sigma))\rangle}p_{\lambda(h)}.
$$
Since these two actions are equivalent, the cycle index series of $X^n/\langle \sigma \rangle$ is obtained by substituting the subgroup $\langle (1,2,\ldots,o(\sigma))\rangle$ with $\langle \sigma\rangle$. Thus, we obtain:
\begin{align*}
\mathbf{C}_{\alpha}(\mathbf{z}) &=\frac{1}{o(\sigma)}\sum_{h\in \langle \sigma\rangle}p_{\lambda(h)}\\
&=\frac{1}{o(\sigma)}\sum_{k|o(\sigma)} \phi(k) p_{\alpha^{(o(\sigma)/k)}}.
\end{align*}
\end{proof}
\begin{remarque}
Given a permutation $\sigma$ of shape $\alpha$, one must carefully distinguish between the notation $\alpha^{(r)}$ and $\alpha^r$. The partition $\alpha^{(r)}$ denotes the shape of the permutation $\sigma^r$, whereas $\alpha^r$ denotes the partition obtained from $\alpha$ by repeating each part $r$ times.
\end{remarque}
\begin{prop}
	The three sets $\{h_{\alpha}(\mathbf{z})\}_{\alpha\vdash n}$, $\{\mathbf{C}_{\alpha}(\mathbf{z})\}_{\alpha\vdash n}$, and $\{\mathbf{K}_{\alpha}(\mathbf{z})\}_{\alpha\vdash n}$ form $\mathbb{Q}$-basis of $\Lambda_n\otimes \mathbb{Q}$, the space of homogeneous symmetric functions of degree $n$.
\end{prop}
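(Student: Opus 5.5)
The plan is to prove all three statements at once by showing that each family is unitriangular, up to nonzero diagonal scalars, with respect to the power-sum basis $\{p_\mu\}_{\mu\vdash n}$ of $\Lambda_n\otimes\mathbb{Q}$. The partial order I will use is refinement: say $\mu\preceq\alpha$ if $\mu$ is obtained by splitting each part of $\alpha$ into a partition of that part. If $\mu\prec\alpha$ strictly, then $\ell(\mu)>\ell(\alpha)$. Hence, if the partitions of $n$ are listed by increasing number of parts (ties broken arbitrarily), a family of the form
\[
f_\alpha = c_\alpha p_\alpha + \sum_{\mu\prec\alpha} d_{\alpha\mu}\, p_\mu , \qquad c_\alpha\neq 0,
\]
has a triangular transition matrix to $\{p_\mu\}$ with nonzero diagonal. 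It is therefore a $\mathbb{Q}$-basis, since $\{p_\mu\}_{\mu\vdash n}$ is one.

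For $h_\alpha(\mathbf{z})$, I will use the classical expansion $h_a=\sum_{\nu\vdash a} z_\nu^{-1}p_\nu$. In the product $h_\alpha=\prod_i h_{a_i}$, every term is $p_\mu$ with $\mu$ obtained by concatenating partitions $\nu^{(i)}\vdash a_i$, so $\mu\preceq\alpha$. The coefficient of $p_\alpha$ comes only from the choices $\nu^{(i)}=(a_i)$, giving $c_\alpha=\prod_i a_i^{-1}\neq 0$. (This case is classical anyway.)

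For $\mathbf{C}_\alpha(\mathbf{z})=\frac{1}{o(\sigma)}\sum_{k=1}^{o(\sigma)}p_{\lambda(\sigma^k)}$, the key observation is the cycle structure of powers. A cycle of length $a$ raised to the power $k$ splits into $\gcd(a,k)$ cycles of length $a/\gcd(a,k)$. Thus $\lambda(\sigma^k)\preceq\alpha$ always, with equality if and only if $\gcd(a_i,k)=1$ for every part $a_i$. Since $o(\sigma)=\operatorname{lcm}(a_i)$, this is equivalent to $\gcd(k,o(\sigma))=1$. So the coefficient of $p_\alpha$ is $c_\alpha=\phi(o(\sigma))/o(\sigma)>0$, and all other terms are strictly finer. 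This is consistent with the $\phi$-expansion in the preceding proposition.

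For $\mathbf{K}_\alpha=\prod_j \mathbf{C}_{i_j^{\alpha_j}}(\mathbf{z})$, I will apply the previous step to each factor. Each factor equals $\frac{\phi(i_j)}{i_j}p_{i_j^{\alpha_j}}$ plus terms strictly finer than $i_j^{\alpha_j}$. Expanding the product, each term is $p_\mu$ with $\mu$ a concatenation of refinements of the blocks $i_j^{\alpha_j}$, so $\mu\preceq\alpha$. Equality $\mu=\alpha$ forces each factor to contribute its leading term: any strict refinement in one block strictly increases the total number of parts, and the total is then larger than $\ell(\alpha)$. This gives $c_\alpha=\prod_j \phi(i_j)/i_j\neq0$.

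I expect the main (mild) obstacle to be stating the refinement order cleanly, so that "strictly finer" really forces "more parts" and the order is total after the length-based tie-breaking. This matters most for the product in $\mathbf{K}_\alpha$, where one must check that mixing finer terms from different factors can never recreate $p_\alpha$; counting parts settles this. Finally, since each family has exactly $p(n)$ members (Lemma \ref{specyc}), triangularity immediately gives invertibility of the square transition matrix, hence the basis property.
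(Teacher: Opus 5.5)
Your proposal is correct and follows the paper's route: expand each family in the power-sum basis, get a triangular transition matrix with nonzero diagonal, and conclude the basis property. You are more careful than the paper in two ways: you make the order explicit (refinement, linearly extended by number of parts), and you get the exact leading coefficient $\prod_j \phi(i_j)/i_j$ for $\mathbf{K}_\alpha(\mathbf{z})$, whereas the paper's displayed constant drops the totient factors (only nonvanishing matters), and your appeal to Lemma~\ref{specyc} is unnecessary since a square triangular matrix with nonzero diagonal is already invertible.
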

\begin{proof}
	It is a standard result that $\{h_{\alpha}(\mathbf{z})\}_{\alpha\vdash n}$ forms a basis of $\Lambda_n$. We now establish this property for $\{\mathbf{C}_{\alpha}(\mathbf{z})\}_{\alpha\vdash n}$ and $\{\mathbf{K}_{\alpha}(\mathbf{z})\}_{\alpha\vdash n}$.
	Let $\sigma$ be a permutation of type $\alpha$. We have:
\begin{align*}
	\mathbf{C}_{\alpha}(\mathbf{z}) &= \frac{1}{o(\sigma)}\sum_{k|o(\sigma)} \phi(k)p_{\lambda(\sigma^{o(\sigma)/k})}(\mathbf{z})\\
	&=\frac{\phi(o(\sigma))}{o(\sigma)} p_{\alpha}(\mathbf{z}) + \frac{1}{o(\sigma)}\sum_{\mu< \alpha} a_{\mu} p_{\mu}(\mathbf{z}).
\end{align*}
Observe that the transition matrix from $\{\mathbf{C}_{\alpha}(\mathbf{z})\}_{\alpha\vdash n}$ to $\{p_{\alpha}\}_{\alpha\vdash n}$ is upper triangular with non-zero diagonal entries. Hence, $\{\mathbf{C}_{\alpha}(\mathbf{z})\}_{\alpha\vdash n}$ is a generating family of $\Lambda_n$. By Lemma \ref{specyc}, the cardinality of this family is equal to $p(n)$, which is the dimension of $\Lambda_n$. 
Similarly, we provide the proof for $\{\mathbf{K}_{\mu}(\mathbf{z})\}_{\mu\vdash n}$. 
We have:
\begin{align*}
\mathbf{K}_{\alpha}(\mathbf{z}) &= \mathbf{C}_{i_1^{\alpha_1}}(\mathbf{z}) \cdot \mathbf{C}_{i_2^{\alpha_2}}(\mathbf{z}) \cdots \mathbf{C}_{i_m^{\alpha_m}}(\mathbf{z})\\
&=\frac{1}{o(1^{\alpha_1})}p_{1^{\alpha_1}} \frac{1}{o(2^{\alpha_2})}\sum_{k|2}\phi(k)p_{(2^{\alpha_2})^{(2/k)}}\; \cdots \; \frac{1}{o(m^{\alpha_m})}\sum_{k|m}p_{(m^{\alpha_m})^{(m/k)}}\\
&=\frac{1}{o(1)o(2)\cdots o(m)} p_{\alpha} + \sum_{\mu < \alpha} a_{\mu} p_{\mu}. 
\end{align*}
We find that the transition matrix from $\{\mathbf{K}_{\alpha}(\mathbf{z})\}_{\alpha\vdash n}$ to $\{p_{\alpha}(\mathbf{z})\}_{\alpha\vdash n}$ is also upper triangular with non-zero coefficients on the diagonal. Hence, $\{\mathbf{K}_{\alpha}(\mathbf{z})\}_{\alpha\vdash n}$ is a generating family of $\Lambda_n$. According to Lemma \ref{specyc}, the cardinality of this family is $p(n)$, the dimension of $\Lambda_n$. 
\end{proof}
The following section investigates the relationship between these basis and other classical basis, such as $p_{\lambda}$, $m_{\lambda}$, and $s_{\lambda}$.
The transition matrices relating the complete homogeneous symmetric functions to the other standard basis are well-established and will be stated without proof.
\begin{prop}\cite{AMJR}
	$$
	h_{\mu}(\mathbf{z}) = \sum_{\lambda}NM_{\lambda,\mu}m_{\lambda}(\mathbf{z}).
	$$
	where ${NM}_{\lambda,\mu}$ is the number of non-negative integer matrices such that the sum of the $i$-th row is equal to $\lambda_i$ and the sum of the $j$-th column is equal to $\mu_j$.
\end{prop}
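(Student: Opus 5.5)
We prove $h_{\mu}=\sum_{\lambda}NM_{\lambda,\mu}m_{\lambda}$, where $NM_{\lambda,\mu}$ counts nonnegative integer matrices with row sums $\lambda$ and column sums $\mu$. The plan is to compute the coefficient of a monomial in $h_\mu(\mathbf{z})=h_{\mu_1}(\mathbf{z})\cdots h_{\mu_\ell}(\mathbf{z})$ directly. By the first item of the cycle-index example, $h_k$ is the cycle index of $\mathbf{E}_k$, and we use the classical identification
$$h_k(\mathbf{z})=\sum_{i_1\le \cdots\le i_k} z_{i_1}\cdots z_{i_k}=\sum_{|\beta|=k} \mathbf{z}^{\beta},$$
the sum of all monomials of degree $k$. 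If one insists on starting from the definition $h_k=\frac{1}{k!}\sum_{\sigma\in S_k}p_{\lambda(\sigma)}$, this identity is Pólya's count of $\mathbf{E}_k$-structures weighted by colourings: a weighted unlabelled $\mathbf{E}_k$-structure is a multiset of colours of size $k$. We take this as the standard fact.

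Multiplying the $\ell$ factors gives
$$h_\mu(\mathbf{z})=\prod_{j=1}^{\ell}\ \sum_{|\beta^{(j)}|=\mu_j}\mathbf{z}^{\beta^{(j)}}.$$
A term of this expansion is a choice of exponent vectors $\beta^{(1)},\dots,\beta^{(\ell)}$, with $\beta^{(j)}$ of total size $\mu_j$. Arrange these as the columns of a nonnegative integer matrix $A=(a_{ij})$ with $a_{ij}=\beta^{(j)}_i$. The column sums of $A$ are then $\mu_j$. The term contributes the monomial $\mathbf{z}^{\lambda}$ exactly when the row sums satisfy $\sum_j a_{ij}=\lambda_i$.

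Since $h_\mu$ is symmetric, it equals $\sum_\lambda c_\lambda m_\lambda$, where $c_\lambda$ is the coefficient of the particular monomial $z_1^{\lambda_1}z_2^{\lambda_2}\cdots$. By the bijection between terms and matrices, $c_\lambda$ is the number of nonnegative integer matrices with row sums $\lambda$ and column sums $\mu$. That number is $NM_{\lambda,\mu}$.

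The only point needing care is the identity $h_k=\sum_{|\beta|=k}\mathbf{z}^\beta$ from the cycle-index definition. The rest is routine bookkeeping, keeping in mind that only finitely many rows are nonzero.
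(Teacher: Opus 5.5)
Your proof is correct. Expanding $h_{\mu_1}\cdots h_{\mu_\ell}$ into monomials, storing the exponent vector chosen from the $j$-th factor as the $j$-th column of an $\mathbb{N}$-matrix, and extracting the coefficient of $z_1^{\lambda_1}z_2^{\lambda_2}\cdots$ is the standard argument. That coefficient is the coefficient of $m_\lambda$, since distinct $m_\lambda$ have disjoint monomial supports.

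The one input you flag, $Z_{\mathbf{E}_k}=\sum_{|\beta|=k}\mathbf{z}^{\beta}$, does follow from P\'olya/Burnside as you say. Indeed, $\mathbf{E}_k[\sigma]$ fixes the unique $\mathbf{E}_k$-structure for every $\sigma$, and the $S_k$-orbits of colourings $[k]\to\{1,2,\ldots\}$ are exactly the multisets of size $k$.

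The paper gives no proof of its own here. It quotes the result from the cited reference and says these transition matrices are ``stated without proof,'' so there is no argument to compare yours against.

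If you wanted a proof in the paper's own language, you could use the duality of $\{h_\lambda\}$ and $\{m_\lambda\}$. The coefficient of $m_\lambda$ in $h_\mu$ is $\langle h_\mu,h_\lambda\rangle$. By Mackey, this equals the number of double cosets $S_\lambda\backslash S_n/S_\mu$, equivalently the number of unlabelled $\mathbf{E}_\lambda\times\mathbf{E}_\mu$-structures. These are counted by $NM_{\lambda,\mu}$ via the double-coset/matrix bijection of Lemma~\ref{bij}. That route ties the statement to the Garsia--Remmel machinery used later in the paper, but it needs the Hall inner product and Mackey's theorem. Your direct expansion is more elementary and self-contained.
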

Expanding a symmetric function into Schur functions addresses a fundamental problem in representation theory, as it corresponds to the decomposition into irreducible components. The expansion of $h_{\mu}$ is a classical result. 
\begin{prop}\cite{BS}\cite{WFJ}
	$$
	h_{\mu}(\mathbf{z}) = \sum_{\lambda\vdash |\mu|} \mathbf{K}_{\lambda,\mu} s_{\lambda}(\mathbf{z}).
	$$
	where $\mathbf{K}_{\mu,\lambda}$ is the Kostka number.
\end{prop}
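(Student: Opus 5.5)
The plan is to prove the expansion $h_\mu=\sum_\lambda \mathbf{K}_{\lambda,\mu}s_\lambda$ through the Hall inner product on $\Lambda_n$, for which the Schur functions form an orthonormal basis and $\{h_\lambda\}$ and $\{m_\lambda\}$ are dual bases. Since the Schur functions are orthonormal, any $f\in\Lambda_n$ satisfies $f=\sum_\lambda\langle f,s_\lambda\rangle s_\lambda$. Taking $f=h_\mu$, it suffices to show $\langle h_\mu,s_\lambda\rangle=\mathbf{K}_{\lambda,\mu}$, the number of semistandard Young tableaux of shape $\lambda$ and content $\mu$.

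First I recall the combinatorial definition $s_\lambda=\sum_T \mathbf{z}^T$, summed over semistandard tableaux $T$ of shape $\lambda$. Grouping tableaux by content and using that $s_\lambda$ is symmetric gives the monomial expansion
\[
s_\lambda=\sum_{\nu}\mathbf{K}_{\lambda,\nu}\,m_\nu .
\]
Next I use duality, $\langle h_\mu,m_\nu\rangle=\delta_{\mu\nu}$, which follows from the Cauchy identity $\prod_{i,j}(1-x_iy_j)^{-1}=\sum_\lambda h_\lambda(\mathbf{x})m_\lambda(\mathbf{y})$. Pairing the monomial expansion of $s_\lambda$ with $h_\mu$ then gives $\langle h_\mu,s_\lambda\rangle=\mathbf{K}_{\lambda,\mu}$, which is the claim.

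The main obstacle is that this argument presupposes the two standard facts behind the Hall inner product: that the combinatorially defined $s_\lambda$ is symmetric (via the Bender--Knuth involutions) and that the $s_\lambda$ are orthonormal (via the second Cauchy identity $\sum_\lambda s_\lambda(\mathbf{x})s_\lambda(\mathbf{y})$, proved by RSK). Since the paper cites \cite{BS}\cite{WFJ} and treats this result as classical, I would take these as known and not reprove them.

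As an alternative route, one can use Pieri's rule: $s_\alpha h_k$ is the sum of $s_\beta$ over all $\beta$ with $\beta/\alpha$ a horizontal strip of size $k$. Iterating Pieri from $h_{\mu_1}=s_{(\mu_1)}$, each Schur term in $h_{\mu_1}h_{\mu_2}\cdots h_{\mu_k}$ corresponds to a chain of horizontal strips ending at $\lambda$. Filling the $i$-th strip with the letter $i$ identifies these chains bijectively with semistandard tableaux of shape $\lambda$ and content $\mu$, so the multiplicity of $s_\lambda$ is $\mathbf{K}_{\lambda,\mu}$.

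One small point to note: the statement writes $\mathbf{K}_{\lambda,\mu}$ in the sum but $\mathbf{K}_{\mu,\lambda}$ in the gloss. The proof should fix the convention that the first index is the shape and the second the content.
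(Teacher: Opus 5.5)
Your proposal is correct. The paper gives no proof of this proposition: it quotes the expansion as classical from the cited references. So there is no argument in the paper to measure yours against, and either of your routes would serve.

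Your Hall-inner-product argument is the shortest once duality of $\{h_\mu\}$ and $\{m_\mu\}$ and orthonormality of $\{s_\lambda\}$ are granted. You can even drop the inner product. Substitute $s_\lambda(\mathbf{y})=\sum_\mu \mathbf{K}_{\lambda,\mu}m_\mu(\mathbf{y})$ into $\sum_\lambda s_\lambda(\mathbf{x})s_\lambda(\mathbf{y})=\sum_\mu h_\mu(\mathbf{x})m_\mu(\mathbf{y})$ and compare coefficients of $m_\mu(\mathbf{y})$. Your iterated-Pieri argument needs only Pieri's rule, and it makes the multiplicity visibly a count of tableaux through the chain-of-horizontal-strips bijection.

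A third route is closer to this paper's own framework. Since $h_\mu=Z_{\mathbf{E}_\mu}$ is the Frobenius characteristic of $\mathrm{Ind}_{S_\mu}^{S_n}\mathbf{1}$, the Frobenius-reciprocity computation the paper later applies to $\mathbf{C}_\mu(\mathbf{z})$ reduces the claim to Young's rule, namely that $S^\lambda$ has $\mathbf{K}_{\lambda,\mu}$-dimensional space of $S_\mu$-invariants. That route trades your symmetric-function facts for representation theory rather than removing any work.

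Your remark on the index convention is accurate. The displayed sum uses $\mathbf{K}_{\lambda,\mu}$ with $\lambda$ the shape and $\mu$ the content, which is correct. Only the gloss $\mathbf{K}_{\mu,\lambda}$ is transposed.
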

The next part of this section defines the transition matrices between the two new basis and the standard basis.
\begin{cor}\label{ctop}
	The transition matrix $(a_{\lambda,\mu})$ from $\{\mathbf{C}_{\alpha}(\mathbf{z})\}_{\alpha\vdash n}$ to $\{p_{\alpha}(\mathbf{z})\}_{\alpha\vdash n}$ is given by
	\begin{equation}
		a_{\lambda,\mu} = 
		\begin{cases}
			\frac{\phi(k)}{o(\mu)} \quad \mbox{ if } \quad \lambda = \mu^{(o(\mu)/k)} \quad \mbox{ where }\quad  k|o(\mu).\\
			0 \quad \mbox{ otherwise }.
		\end{cases}
	\end{equation}
\end{cor}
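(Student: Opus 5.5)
The plan is to read the coefficients directly off the power-sum expansion of the previous proposition,
\[
\mathbf{C}_{\mu}(\mathbf{z})=\frac{1}{o(\sigma)}\sum_{k\mid o(\sigma)}\phi(k)\,p_{\mu^{(o(\sigma)/k)}}(\mathbf{z}),
\]
where $\sigma$ is any permutation of cycle type $\mu$, so that $o(\sigma)=o(\mu)$ is the lcm of the parts of $\mu$. Write $\mathbf{C}_{\mu}(\mathbf{z})=\sum_{\lambda\vdash n}a_{\lambda,\mu}\,p_{\lambda}(\mathbf{z})$. Since $\{p_\lambda\}$ is a basis, $a_{\lambda,\mu}$ is uniquely determined. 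It equals the sum of $\phi(k)/o(\mu)$ over all divisors $k$ of $o(\mu)$ with $\mu^{(o(\mu)/k)}=\lambda$.

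To recover the stated case formula, I must show that at most one divisor $k$ contributes to each $\lambda$. That is, I need the map $k\mapsto \mu^{(o(\mu)/k)}$ on the divisors of $o(\mu)$ to be injective.

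\emph{Main step.} The permutation $\sigma^{o(\mu)/k}$ has order exactly $k$, since $\sigma$ generates a cyclic group of order $o(\mu)$. The order of a permutation is determined by its cycle type: it is the lcm of the parts of the partition. Hence $\mu^{(o(\mu)/k)}=\mu^{(o(\mu)/k')}$ forces $k=\operatorname{lcm}(\text{parts})=k'$. So distinct divisors give distinct partitions, and no terms of the sum need to be merged. For completeness I would also record the standard explicit description: if $d=o(\mu)/k$, a cycle of length $\mu_i$ in $\sigma$ splits in $\sigma^{d}$ into $\gcd(\mu_i,d)$ cycles of length $\mu_i/\gcd(\mu_i,d)$. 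This is not needed for the argument.

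\emph{Conclusion.} If $\lambda=\mu^{(o(\mu)/k)}$ for some divisor $k\mid o(\mu)$, this $k$ is unique, and $a_{\lambda,\mu}=\phi(k)/o(\mu)$. Otherwise $p_\lambda$ does not occur, and $a_{\lambda,\mu}=0$.

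The expected main obstacle is the hidden collapsing of terms, namely two powers of $\sigma$ having the same cycle type. The order argument disposes of this cleanly. A minor bookkeeping point is that the previous proposition groups the elements of $\langle\sigma\rangle$ by order, with $\phi(k)$ elements of order $k$. All elements of a given order generate the same subgroup $\langle\sigma^{o/k}\rangle$, and generators of the same cyclic subgroup are powers of one another coprime to the order, so they share a cycle type. This justifies the grouping, which I will cite from the previous proposition rather than reprove.
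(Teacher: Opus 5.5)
Your proposal is correct and follows the same route as the paper. The paper's one-line proof also just extracts coefficients from the expansion $\mathbf{C}_{\mu}(\mathbf{z})=\frac{1}{o(\mu)}\sum_{k\mid o(\mu)}\phi(k)\,p_{\mu^{(o(\mu)/k)}}(\mathbf{z})$; there it is written with $p_{\mu^{(k)}}$, which is a slip, and your indexing is the correct one.

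What you add is the injectivity of $k\mapsto\mu^{(o(\mu)/k)}$. This follows because $\sigma^{o(\mu)/k}$ has order exactly $k$, and the order of a permutation is the lcm of its cycle lengths. The paper leaves this implicit, but it is precisely what makes the case-defined formula for $a_{\lambda,\mu}$ well posed.
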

\begin{proof}
This follows immediately by extracting the coefficients from the expression $$\mathbf{C}_{\mu}(\mathbf{z}) = \frac{1}{o(\mu)}\sum\limits_{k|o(\mu)}\phi(k) p_{\mu^{(k)}}.$$ 
\end{proof}
\begin{ex}
$$
\mathbf{C}_{422}(\mathbf{z}) = \frac{1}{4} p_{11111111} + \frac{1}{4} p_{221111} + \frac{1}{2} p_{422}.
$$
\end{ex}
The expansion of these symmetric functions into monomial symmetric functions provides generating series for weighted colorings on $n$ points, in accordance with Pólya's theorem. 
\begin{lem}{\cite{AMJR}}
	The transition matrix from $\{p_{\alpha}(\mathbf{z})\}_{\alpha\vdash n}$ to $\{m_{\alpha}(\mathbf{z})\}_{\alpha\vdash n}$ is given by $(\mathcal{OB}_{\lambda,\mu})_{\lambda,\mu}$, that is:
	$$
	p_{\mu}(\mathbf{z}) = \sum_{\mu\vdash n} \mathcal{OB}_{\lambda,\mu} m_{\lambda}(\mathbf{z}).
	$$
	where $\mathcal{OB}_{\mu,\lambda}$ is the number of ordered tabloid bricks of content $\lambda$ and shape $\mu$.
\end{lem}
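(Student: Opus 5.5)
The plan is to compare coefficients of a single monomial on both sides. Since $p_{\mu}(\mathbf{z})$ is symmetric, it equals $\sum_{\lambda\vdash n} c_{\lambda,\mu}\, m_{\lambda}(\mathbf{z})$, where $c_{\lambda,\mu}$ is the coefficient of the monomial $x^{\lambda}=x_1^{\lambda_1}x_2^{\lambda_2}\cdots x_\ell^{\lambda_\ell}$ in $p_{\mu}$. So it suffices to show $c_{\lambda,\mu}=\mathcal{OB}_{\lambda,\mu}$. I read the statement's indices this way: the displayed sum runs over $\lambda$, and $\mathcal{OB}_{\lambda,\mu}$ counts ordered brick tabloids built on the shape $\lambda$ with bricks of sizes given by the parts of $\mu$.

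First, I expand the product. Write $\mu=(\mu_1,\ldots,\mu_k)$, with parts indexed by position so that equal parts are distinguished. Then
\[
p_{\mu}=\prod_{i=1}^{k}\Big(\sum_{j\geq 1} x_j^{\mu_i}\Big)=\sum_{f}\prod_{i=1}^{k} x_{f(i)}^{\mu_i},
\]
where $f$ ranges over all maps $[k]\to\mathbb{Z}_{>0}$. Hence $c_{\lambda,\mu}$ is the number of maps $f:[k]\to[\ell]$ such that $\sum_{i\in f^{-1}(j)}\mu_i=\lambda_j$ for every $j$. Combinatorially, such an $f$ distributes the labelled parts of $\mu$ into the rows of $\lambda$, and each row is filled exactly.

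Second, I match these maps with ordered brick tabloids. In an ordered brick tabloid of shape $\lambda$ and brick content $\mu$, row $j$ of the Ferrers diagram of $\lambda$ is tiled by bricks, and the bricks carry the labels $1,\ldots,k$, with brick $i$ of length $\mu_i$. A canonical convention fixes the arrangement: within each row, the bricks are placed left to right in increasing label order. Given $f$, I place the bricks labelled by $f^{-1}(j)$ in row $j$ in this order. The row sums guarantee an exact tiling. Conversely, reading off which row each labelled brick lies in recovers $f$. The two constructions are clearly inverse to each other, so $c_{\lambda,\mu}=\mathcal{OB}_{\lambda,\mu}$.

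The main obstacle is not the algebra but pinning down the precise definition of ``ordered brick tabloid'' so that it matches $\mathcal{OB}_{\lambda,\mu}$ as cited from \cite{AMJR}. The point to check is how equal parts of $\mu$ are counted. Either the bricks of equal length are distinguished by labels, or equivalently, one counts brick tabloids weighted by the number of ways to order equal bricks. I would first fix the labelled-brick convention above. I would then check it on a small case: for $p_{11}=m_2+2m_{11}$, the labelled bricks $1,2$ in the shape $(1,1)$ give the two tabloids, and in the shape $(2)$ they give exactly one, because of the increasing-order convention. This confirms the normalization before writing the bijection formally.
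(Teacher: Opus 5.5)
Your proof is correct. Extracting the coefficient of $x^{\lambda}$ in $p_{\mu}$ gives the number of maps sending the position-labelled parts of $\mu$ to the rows of $\lambda$ with exact row sums, and you match these with labelled bricks placed in increasing label order in each row. Your reading of the indices (sum over $\lambda$, shape $\lambda$, brick content $\mu$) also correctly repairs the typos in the displayed statement.

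The paper gives no proof of its own; it simply quotes this fact from \cite{AMJR}. Your coefficient-extraction bijection is the standard argument for it, so there is no different route to compare.
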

In order to obtain the matrix relating $\{\mathbf{C}_{\alpha}\}$ to $\{h_{\alpha}\}_{\alpha\vdash n}$, we state the result for the transition $\{p_{\alpha}\}_{\alpha\vdash n}$ to $\{h_{\alpha}\}_{\alpha\vdash n}$. 
Let $B_{\lambda,\mu}$, be the set of all possible Young diagrams of $\mu$ where the rows of $\mu$ are partitioned into "bricks" of lengths giving the integer partition $\lambda$. We define the weight of $T \in B_{\lambda,\mu}$, denoted $w(T)$, to be the product of the lengths of the bricks ending each row in $T$ and let $w(B_{\mu,\lambda}) = \sum\limits_{T\in B_{\lambda,\mu}} w(T).$
\begin{lem}{\cite{AMJR}}
	The transition matrix from $\{p_{\alpha}(\mathbf{z})\}_{\alpha\vdash n}$ to $\{h_{\alpha}(\mathbf{z})\}_{\alpha\vdash n}$ is given by $((-1)^{l(\mu)+l(\lambda)}w(B_{\lambda,\mu}))_{\lambda,\mu}$, that is:
	$$
	p_{\mu}(\mathbf{z}) = \sum_{\lambda\vdash n} (-1)^{l(\mu)+l(\lambda)}w(B_{\lambda,\mu}) h_{\lambda}(\mathbf{z})
	$$

\end{lem}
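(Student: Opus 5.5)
The plan is to use the classical expansion of $h_n$ in power sums and then multiply. The starting point is the exponential generating identity
\[
\sum_{n\geq 0} h_n t^n \;=\; \exp\Big(\sum_{k\geq 1}\frac{p_k}{k}t^k\Big),
\qquad\text{equivalently}\qquad
\sum_{n\geq 0} p_n t^{n-1}\cdot\frac{1}{\sum_n h_n t^n}\;=\;\frac{d}{dt}\log \sum_n h_n t^n .
\]
It yields Newton's identity $p_n=\sum_{j=1}^{n} (-1)^{j-1}\, j\, e_j h_{n-j}$. It is more convenient, however, to use the dual form obtained from $\sum_n p_n t^{n}= t\,\frac{H'(t)}{H(t)}$ with $H(t)=\sum h_n t^n$. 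Expanding $1/H(t)=1/(1+(H(t)-1))$ as a geometric series gives
\[
p_n \;=\; \sum_{\beta\models n}(-1)^{\ell(\beta)-1}\,\beta_{\ell(\beta)}\; h_{\beta_1}h_{\beta_2}\cdots h_{\beta_{\ell(\beta)}},
\]
where $\beta$ runs over compositions of $n$. The derivative $tH'(t)$ contributes the factor $\beta_{\ell(\beta)}$ attached to the last part, and the geometric series contributes the sign and the remaining parts.

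Next I interpret this single-row formula combinatorially. A composition $\beta$ of $n$ is exactly a way of cutting a row of length $n$ into consecutive bricks. The weight $\beta_{\ell(\beta)}$ is the length of the brick ending the row, and the sign is $(-1)^{\ell(\beta)-1}=(-1)^{1+\ell(\beta)}$. Grouping compositions by their underlying partition $\lambda$ gives the one-row case $p_n=\sum_\lambda (-1)^{1+\ell(\lambda)} w(B_{\lambda,(n)})\,h_\lambda$.

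For general $\mu=(\mu_1,\dots,\mu_k)$, I will use $p_\mu=p_{\mu_1}\cdots p_{\mu_k}$ and multiply the row expansions. A choice of one brick-filling per row is exactly an element $T$ of the brick set for the diagram of $\mu$. Its weight is the product over rows of the last brick lengths, which is $w(T)$, and the product of the $h$'s is $h_\lambda$, where $\lambda$ is the multiset of all brick lengths. The sign multiplies to
\[
\prod_i (-1)^{\ell(\beta^{(i)})-1}=(-1)^{\ell(\lambda)-\ell(\mu)}=(-1)^{\ell(\lambda)+\ell(\mu)}.
\]
Collecting terms by $\lambda$ gives the stated matrix.

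The main obstacle is the first step: establishing the power-sum expansion with the correct last-brick weight. Care is needed that the factor $\beta_{\ell(\beta)}$ really attaches to the final part, not an arbitrary one. This follows because $tH'(t)=\sum_m m h_m t^m$ is multiplied on the left by the geometric-series terms $(-(H-1))^{j}$, so the differentiated factor sits in the last position. Everything afterwards is bookkeeping.
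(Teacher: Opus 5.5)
The paper gives no proof of this lemma; it is quoted from E\u{g}ecio\u{g}lu--Remmel \cite{AMJR}. Your argument is correct and self-contained, and it is essentially the standard derivation. The usual route iterates the Newton recursion $p_n = n h_n - \sum_{k=1}^{n-1} h_{n-k}\,p_k$ (or its $e$-analogue followed by $\omega$). Unrolling that recursion produces exactly your sum over compositions $\beta$ of $n$, with sign $(-1)^{\ell(\beta)-1}$ and weight $\beta_{\ell(\beta)}$. Expanding $tH'(t)/H(t)$ with the geometric series for $1/H(t)$ is simply the generating-function form of that iteration.

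The passage to general $\mu$ is also right. This covers the use of $p_\mu=\prod_i p_{\mu_i}$ and the sign count $(-1)^{\ell(\lambda)-\ell(\mu)}$. It also covers the identification of one composition per row with an element of $B_{\lambda,\mu}$, taking shape $\mu$ and brick multiset $\lambda$. That identification implicitly uses two conventions from the paper's informal definition: bricks of equal length are indistinguishable, and rows of equal length are distinguished by position.

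Two small repairs are needed.

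\begin{itemize}
\item Your ``equivalently'' display is wrong as written. The correct identity is $\sum_{n\ge 1} p_n t^{n-1} = H'(t)/H(t)$, with no extra factor $1/H(t)$ on the left and no $n=0$ term. Since you afterwards use the correct form $\sum_{n\ge1}p_nt^n = tH'(t)/H(t)$, nothing downstream is affected.
\item The Newton identity involving $e_j$ is true, but it does not follow from the $H$-identity alone; it also needs $H(t)E(-t)=1$. It is never used, so you can simply drop it.
\end{itemize}

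The step you flag as the main obstacle is not really one. Because the $h$'s commute, reversing each composition shows that weighting by the first brick gives the same total. Where the factor sits is therefore only a convention, although your expansion does naturally place it on the last part, matching the paper's definition of $w(T)$.
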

\begin{prop}
	The transition matrix from $\{\mathbf{C}_{\alpha}(\mathbf{z})\}_{\alpha\vdash n}$ to $\{m_{\alpha}(\mathbf{z})\}_{\alpha\vdash n}$ is given by
	$$
	q_{\lambda,\mu} = \frac{1}{o(\mu)}\sum_{k|o(\mu)} \phi(k) \mathcal{OB}_{\mu^{(o(\mu)/k)},\lambda}
	$$
\end{prop}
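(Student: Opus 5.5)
We compose two change-of-basis results already available. First we expand $\mathbf{C}_{\mu}(\mathbf{z})$ in the power-sum basis, using the proposition giving the $\phi$-expansion, equivalently Corollary \ref{ctop}:
$$
\mathbf{C}_{\mu}(\mathbf{z}) = \frac{1}{o(\mu)}\sum_{k\mid o(\mu)} \phi(k)\, p_{\mu^{(o(\mu)/k)}}(\mathbf{z}),
$$
where $\mu^{(o(\mu)/k)}$ is the cycle type of $\sigma^{o(\mu)/k}$ for a fixed $\sigma$ of type $\mu$. Next we expand each power sum in the monomial basis using the lemma on ordered brick tabloids:
$$
p_{\nu}(\mathbf{z})=\sum_{\lambda\vdash n}\mathcal{OB}_{\nu,\lambda}\, m_{\lambda}(\mathbf{z}), \qquad \nu=\mu^{(o(\mu)/k)}.
$$
We apply it term by term, taking care that the index order matches the convention in the statement, with shape $\nu$ and content $\lambda$.

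Substituting gives a finite double sum over $k\mid o(\mu)$ and $\lambda\vdash n$. Since everything is finite and linear, we interchange the sums and collect the coefficient of $m_\lambda(\mathbf{z})$:
$$
\mathbf{C}_{\mu}(\mathbf{z})=\sum_{\lambda\vdash n}\Bigl(\frac{1}{o(\mu)}\sum_{k\mid o(\mu)}\phi(k)\,\mathcal{OB}_{\mu^{(o(\mu)/k)},\lambda}\Bigr) m_{\lambda}(\mathbf{z}).
$$
This is exactly $q_{\lambda,\mu}$. Equivalently, the transition matrix is the product of the matrix $(a_{\lambda,\mu})$ of Corollary \ref{ctop} with the matrix $(\mathcal{OB})$. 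Only the nonzero entries $a_{\nu,\mu}=\phi(k)/o(\mu)$ at $\nu=\mu^{(o(\mu)/k)}$ contribute.

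The one delicate point is bookkeeping rather than mathematics. Distinct divisors $k$ may produce the same cycle type $\mu^{(o(\mu)/k)}$; for instance, $\sigma^{j}$ and $\sigma^{j'}$ can share a type. In that case the coefficient $a_{\nu,\mu}$ in Corollary \ref{ctop} should really be a sum of the corresponding $\phi(k)/o(\mu)$. Writing the formula as a sum over $k$ rather than over distinct $\nu$ absorbs this automatically, so we keep the sum indexed by $k$ throughout. We also check that the $\phi(k)$-weighting is correct, namely that exactly $\phi(k)$ elements of $\langle\sigma\rangle$ have order $k$ and share the cycle type of $\sigma^{o(\mu)/k}$. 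This holds because they generate the same cyclic subgroup, and powers coprime to $k$ preserve cycle type.
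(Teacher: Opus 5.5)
Your argument is correct and is the same as the paper's: you compose the $\phi$-expansion of $\mathbf{C}_\mu(\mathbf{z})$ in power sums (Corollary \ref{ctop}) with the brick-tabloid expansion of each $p_\nu$ in monomials, then collect the coefficient of $m_\lambda(\mathbf{z})$. One minor correction: your ``delicate point'' never arises, because $\sigma^{o(\mu)/k}$ has order exactly $k$ and a permutation's order is determined by its cycle type, so distinct divisors $k$ always give distinct types $\mu^{(o(\mu)/k)}$ and the entries of Corollary \ref{ctop} are already well defined (indexing the sum by $k$ is of course still fine).
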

\begin{proof}
	By multiplying the transition matrix $(a_{\lambda,\mu})_{\lambda,\mu}$ from $\{\mathbf{C}_{\alpha}(\mathbf{z})\}_{\alpha\vdash n}$ to $\{p_{\alpha}(\mathbf{z})\}_{\alpha\vdash n}$ with the transition matrix $(\mathcal{OB}_{\lambda,\mu})_{\lambda,\mu}$ from $\{p_{\alpha}(\mathbf{z})\}_{\alpha\vdash n}$ to $\{m_{\alpha}(\mathbf{z})\}_{\alpha\vdash n}$, we obtain the transition matrix from $\{\mathbf{C}_{\alpha}(\mathbf{z})\}_{\alpha\vdash n}$ to $\{m_{\alpha}(\mathbf{z})\}_{\alpha\vdash n}$.
\end{proof}
\begin{cor}
	$$
	\mathbf{C}_{\mu}(\mathbf{z}) = \sum_{\lambda\vdash n} \frac{1}{o(\mu)}\sum_{k|o(\mu)} \phi(k) \mathcal{OB}_{\mu^{(o(\lambda)/k)},\lambda} m_{\lambda}.
	$$
\end{cor}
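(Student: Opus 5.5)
The corollary is the previous proposition written out coefficientwise, so I would simply compose the two known expansions. Start from Corollary \ref{ctop}, which gives
\[
\mathbf{C}_{\mu}(\mathbf{z}) = \frac{1}{o(\mu)}\sum_{k\mid o(\mu)} \phi(k)\, p_{\mu^{(o(\mu)/k)}}(\mathbf{z}),
\]
where $o(\mu)$ is the order of any permutation $\sigma$ of cycle type $\mu$. By Lemma \ref{espconj} this does not depend on the choice of $\sigma$. Next, substitute into each term the monomial expansion of the power sums from the lemma of \cite{AMJR}:
\[
p_{\nu}(\mathbf{z}) = \sum_{\lambda\vdash n}\mathcal{OB}_{\nu,\lambda}\, m_{\lambda}(\mathbf{z}).
\]
Here $\mathcal{OB}_{\nu,\lambda}$ counts ordered brick tabloids of shape $\nu$ and content $\lambda$. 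I apply it with $\nu=\mu^{(o(\mu)/k)}$ for each divisor $k$ of $o(\mu)$.

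Since both sums are finite, I can interchange the sum over divisors $k$ with the sum over $\lambda\vdash n$. Collecting the coefficient of $m_\lambda$ then gives
\[
\frac{1}{o(\mu)}\sum_{k\mid o(\mu)}\phi(k)\,\mathcal{OB}_{\mu^{(o(\mu)/k)},\lambda},
\]
which is exactly the entry $q_{\lambda,\mu}$ of the preceding proposition. Summing over $\lambda$ yields the stated expansion of $\mathbf{C}_\mu(\mathbf{z})$.

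The only real obstacle is notational rather than mathematical. The statement writes $\mu^{(o(\lambda)/k)}$ inside $\mathcal{OB}$, while the derivation produces $\mu^{(o(\mu)/k)}$. Here $\mu$ is the shape whose cyclic group is being summed over, and $k$ ranges over divisors of $o(\mu)$, so the correct exponent is $o(\mu)/k$. I would read the $o(\lambda)$ as a typo for $o(\mu)$ and prove the formula in that form. I would also keep the index order of $\mathcal{OB}$ consistent with the lemma, with the shape (the power-sum index) first and the content $\lambda$ second, because the surrounding statements use the subscripts inconsistently.
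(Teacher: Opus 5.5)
Your proposal is correct and follows the paper's route: the paper obtains this corollary as a change of basis from the preceding proposition, which is exactly the product of the $\mathbf{C}\to p$ transition matrix of Corollary~\ref{ctop} with the $p\to m$ matrix $(\mathcal{OB})$, giving the coefficients $q_{\lambda,\mu}$. You are also right to read $\mu^{(o(\lambda)/k)}$ as a typo for $\mu^{(o(\mu)/k)}$; this is confirmed by the range $k\mid o(\mu)$ and by the form $\mathcal{OB}_{\mu^{(o(\mu)/k)},\lambda}$ in the preceding proposition.
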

\begin{proof}
This is a direct consequence of the previous Proposition, obtained by a change of basis.
\end{proof}
\begin{ex}
\begin{align*}
\mathbf{C}_{42}(\mathbf{z}) &= 180 m_{111111} + 90 m_{21111} + 46 m_{2211} + 24 m_{222} + 30 m_{3111} + \\&16 m_{321} + 6 m_{33} + 8 m_{411} + 5 m_{42} + 2 m_{51} + m_{6}.
\end{align*}
\end{ex}
\begin{prop}\label{count_struct}
	The number of $\mathbf{C}_{\mu}$-structures on an $n$-element set is equal to 
	\begin{align*}
	\frac{1}{o(\mu)}\sum_{k|o(\mu)} \phi(k) \mathcal{OB}_{\mu^{(o(\lambda)/k)},1^n}&= \frac{n!}{o(\mu)}\\
	&= \frac{1}{o(\mu)}\sum_{g\in \langle \lambda\rangle} \mathcal{OB}_{g,1^n}.
	\end{align*}
\end{prop}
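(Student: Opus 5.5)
The plan is to compute the number of labelled $\mathbf{C}_{\mu}$-structures on $[n]$ in two independent ways and check that the three displayed expressions coincide. The notation in the statement is a little loose: $\lambda$ inside the sums stands for $\mu$ (the shape of $\sigma$), and $g\in\langle\lambda\rangle$ means $g$ ranges over $\langle\sigma\rangle$ with $\lambda(\sigma)=\mu$. I read it that way throughout.

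\textbf{Direct count.} By definition, $\mathbf{C}_{\mu}[U]=\{\lambda\langle\sigma\rangle \mid \lambda:[n]\xrightarrow{\simeq}U\}$. This is the set of left cosets of $\langle\sigma\rangle$ in the set of all bijections $[n]\to U$. Right multiplication by $\langle\sigma\rangle$ acts freely on the bijections, so every coset has exactly $|\langle\sigma\rangle|=o(\sigma)=o(\mu)$ elements. There are $n!$ bijections, hence $|\mathbf{C}_{\mu}[U]|=n!/o(\mu)$, which is the middle expression.

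\textbf{Cycle index count.} The number of labelled $F$-structures on an $n$-set is $n!$ times the coefficient of $p_1^n/n!$-type data. Concretely, it equals $n!$ times the coefficient of $m_{1^n}$ in $Z_F$, since specializing to distinct monomials counts injective colourings. So I take the coefficient of $m_{1^n}$ in the Corollary giving the $m$-expansion of $\mathbf{C}_\mu(\mathbf z)$; it is $\frac{1}{o(\mu)}\sum_{k\mid o(\mu)}\phi(k)\,\mathcal{OB}_{\mu^{(o(\mu)/k)},1^n}$.

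The key fact is that $\mathcal{OB}_{\nu,1^n}=n!$ for every $\nu\vdash n$, independently of $\nu$. Equivalently, the coefficient of $m_{1^n}$ in $p_\nu$ is $n!$. One sees this by filling each brick of shape $\nu$ with distinct labels: every assignment of the letters $1,\dots,n$ to the $n$ cells with each letter used once is admissible, giving $n!$ ordered fillings. The first expression then becomes $\frac{n!}{o(\mu)}\sum_{k\mid o(\mu)}\phi(k)$, which I want to equal $n!$.

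At this point the statement's normalization forces a choice. I would identify $\mathcal{OB}_{\cdot,1^n}$ with the $m_{1^n}$-coefficient divided by $n!$, consistent with the earlier Example where $\mathbf{C}_{42}$ has $m_{111111}$-coefficient $180=720/4$, so the coefficient of $m_{1^n}$ in $p_\nu$ is normalized to $1$. Then the sum is $\frac{1}{o(\mu)}\sum_{k\mid o(\mu)}\phi(k)\cdot n!$, and Gauss's identity $\sum_{k\mid N}\phi(k)=N$ gives exactly $n!$. I will count $\mathbf{C}_\mu$-structures through $n!\,\cdot[m_{1^n}]$ or directly, whichever the authors' convention demands, taking care to keep the factor $n!$ consistent.

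\textbf{Group sum.} The last expression $\frac{1}{o(\mu)}\sum_{g\in\langle\sigma\rangle}\mathcal{OB}_{g,1^n}$ is the same sum before grouping the powers of $\sigma$ by order. There are exactly $\phi(k)$ elements of order $k$ in the cyclic group, and each such $g$ has cycle type $\mu^{(o(\mu)/k)}$, as in the proof of the $\phi$-formula for $\mathbf{C}_\alpha(\mathbf z)$. So the two sums agree term by term, and each equals $\frac{1}{o(\mu)}\cdot o(\mu)\cdot(\text{common value})$.

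\textbf{Main obstacle.} The only delicate point is pinning down the normalization of $\mathcal{OB}_{\nu,1^n}$ so that all three expressions literally equal $n!/o(\mu)$. I would first verify it on the example $\mathbf{C}_{42}$, where the coefficient $180=6!/4$ should come out, and adopt the convention that makes $\sum_{g}\mathcal{OB}_{g,1^n}=n!$.
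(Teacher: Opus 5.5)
There is a genuine gap in the step linking the sums to $n!/o(\mu)$. Your direct coset count $|\mathbf{C}_\mu[U]|=n!/o(\mu)$ is correct. Your regrouping of $\langle\sigma\rangle$ by element order is also correct, and it shows the first and third expressions agree.

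The problem is your ``key fact'' that $\mathcal{OB}_{\nu,1^n}=n!$ for every $\nu\vdash n$. It is false. The monomial $x_1x_2\cdots x_n$ is squarefree, but every monomial of $p_\nu=\prod_i\bigl(\sum_j x_j^{\nu_i}\bigr)$ contains some $x_j^{\nu_i}$. Hence $[m_{1^n}]\,p_\nu=n!$ if $\nu=1^n$, and $0$ otherwise. In brick language, all cells of a brick carry the same variable, so they cannot receive distinct labels. Also, a brick of length $\ge 2$ cannot fit in a row of length $1$.

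Gauss's identity $\sum_{k\mid N}\phi(k)=N$ is therefore not the mechanism. If your fact were true, the first expression would equal $\frac{n!}{o(\mu)}\sum_{k\mid o(\mu)}\phi(k)=n!$. That contradicts the value $n!/o(\mu)$ you obtained directly. No renormalization of $\mathcal{OB}$ fixes this: dividing by $n!$ makes the sum equal to $1$. The example you cite refutes the claim. We have $\mathbf{C}_{42}(\mathbf{z})=\frac14\bigl(p_{1^6}+p_{2211}+2p_{42}\bigr)$, and only $p_{1^6}$ contributes to $m_{1^6}$, giving $\frac14\cdot 720=180$.

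Relatedly, you are off by a factor $n!$ in the Pólya step. The number of labelled $F$-structures on an $n$-set is the coefficient of $m_{1^n}$ in $Z_F$ itself, equivalently $n!$ times the coefficient of $p_1^n$. It is not $n!$ times the $m_{1^n}$-coefficient, which is why $180=6!/4$ appears as the coefficient itself.

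With the correct value of $\mathcal{OB}_{\nu,1^n}$, only the term $k=1$ survives. Equivalently, only $g=\mathrm{id}$ survives, where $\phi(1)=1$ and $\mu^{(o(\mu))}=1^n$. Both sums then collapse to $n!/o(\mu)$. This collapse onto the identity term is the missing idea, and it is exactly what the paper's Pólya argument uses.
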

\begin{proof}
	By Pólya's theorem, the coefficient of $m_{1^n}$ in the cycle index series of a species yields the number of these structures. For the second expression, the coefficient of $p_{1^n}$ multiplied by $n!$ yields the count of these same structures.
\end{proof}
Consequently, we immediately obtain the following identity:
\begin{cor}
	If $\lambda\vdash n$, we have 
	$$
	\sum_{k|o(\lambda)} \phi(k) \mathcal{OB}_{\lambda^{(o(\lambda/k))}, 1^n} = n!.
	$$
\end{cor}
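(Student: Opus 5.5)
The plan is to read the identity off Proposition~\ref{count_struct} and then confirm it independently by computing the left-hand side term by term. I read the exponent $\lambda^{(o(\lambda/k))}$ as $\lambda^{(o(\lambda)/k)}$, the cycle type of $\sigma^{o(\sigma)/k}$ for a permutation $\sigma$ of type $\lambda$.

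\textbf{Step 1.} Proposition~\ref{count_struct}, with $\mu=\lambda$, gives
$$\frac{1}{o(\lambda)}\sum_{k\mid o(\lambda)}\phi(k)\,\mathcal{OB}_{\lambda^{(o(\lambda)/k)},1^n}=\frac{n!}{o(\lambda)}.$$
Multiplying both sides by $o(\lambda)$ yields the corollary. The right-hand value $n!/o(\lambda)$ is the number of $\mathbf{C}_\lambda$-structures. It comes from the coefficient of $p_{1^n}$ in $\mathbf{C}_\lambda(\mathbf{z})$, which is $\phi(1)/o(\lambda)=1/o(\lambda)$ by Corollary~\ref{ctop}, multiplied by $n!$.

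\textbf{Step 2.} I would verify the left-hand sum directly, since it is this sum, and not just its closed form, that the corollary asserts.
\begin{itemize}
\item By the lemma giving the $p\to m$ transition, $\mathcal{OB}_{\nu,1^n}$ is the coefficient of $m_{1^n}$ in $p_\nu$.
\item Since $p_\nu=\prod_i\sum_j x_j^{\nu_i}$, the squarefree monomial $x_1\cdots x_n$ occurs in $p_\nu$ only when every part $\nu_i$ equals $1$. When $\nu=1^n$ it occurs with coefficient $n!$. Hence $\mathcal{OB}_{\nu,1^n}=n!$ if $\nu=1^n$ and $0$ otherwise.
\item The partition $\lambda^{(o(\lambda)/k)}$ equals $1^n$ exactly when $\sigma^{o(\sigma)/k}=\mathrm{Id}$. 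This happens exactly when $o(\sigma)$ divides $o(\sigma)/k$, that is, when $k=1$.
\item Therefore only the term $k=1$ survives, and it contributes $\phi(1)\,n!=n!$.
\end{itemize}

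\textbf{Main obstacle.} The only delicate point is notational. The statement's exponent $o(\lambda/k)$ has to be interpreted as $o(\lambda)/k$. One also has to check that the convention $\mathcal{OB}_{\nu,1^n}$ (shape versus content) matches the coefficient of $m_{1^n}$ in $p_\nu$. Under the opposite convention the count of ordered brick tabloids with bricks of size $1$ is still $n!$ only for $\nu=1^n$, so the conclusion is unchanged.
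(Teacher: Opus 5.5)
Your Step~1 is exactly the paper's argument: the corollary is Proposition~\ref{count_struct} multiplied through by $o(\lambda)$. Your Step~2 is a correct and more elementary direct check. It shows that the sum collapses to its $k=1$ term $\phi(1)\,n!$, because $\lambda^{(o(\lambda)/k)}=1^n$ only for $k=1$ and $x_1\cdots x_n$ occurs only in $p_{1^n}$.

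The claim that is wrong is in your closing paragraph: the index convention is not harmless.
\begin{itemize}
\item Suppose $\mathcal{OB}_{\nu,1^n}$ is read as the number of ordered brick tabloids of shape $\nu$ with $n$ bricks of size $1$. Equivalently, it is the coefficient of $m_\nu$ in $p_{1^n}$. This is what the displayed formula $p_\mu=\sum_\lambda\mathcal{OB}_{\lambda,\mu}m_\lambda$ of the $p\to m$ lemma literally assigns to this symbol.
\item Then $\mathcal{OB}_{\nu,1^n}=n!/\prod_i\nu_i!$. As you say, this equals $n!$ only for $\nu=1^n$. But it is nonzero for every $\nu$, so the sum no longer collapses.
\item The identity then fails. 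For $\lambda=(2)$ one gets $\phi(1)\cdot 2+\phi(2)\cdot 1=3\neq 2!$. For $\lambda=(3)$ one gets $6+2\cdot 1=8\neq 3!$.
\end{itemize}

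Consequently, your first bullet in Step~2 should not be justified by the lemma as displayed, since it gives the transposed index. The correct justification is that Proposition~\ref{count_struct} is proved by extracting the coefficient of $m_{1^n}$ from $\mathbf{C}_\lambda(\mathbf{z})$. So there, and in the corollary, $\mathcal{OB}_{\nu,1^n}$ must mean the coefficient of $m_{1^n}$ in $p_\nu$. Once you state that reading explicitly, your proof is complete. Under the lemma's literal convention the statement would be false.
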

\begin{proof}
This follows directly from Proposition \ref{count_struct}.
\end{proof}
Furthermore, the cycle index series of species are always Schur-positive. Indeed, for a species $F : \mathbb{B} \to \Ens$, and for each $n \in \mathbb{N}$, the vector space $\mathrm{C}F[n]$ carries a natural representation of $S_n$.  
The Frobenius characteristic of this representation is precisely the cycle index series $Z_F(p_1, \dots, p_n)$. Thus, $Z_F(p_1, \dots, p_n)$ is Schur-positive, as it corresponds to the Frobenius characteristic of an $S_n$–module (see \cite{FB2}). The following proposition gives the explicit decomposition of $\mathbf{C}_{\alpha}(\mathbf{z})$ into Schur functions.
\begin{prop}
	The transition matrix $(d_{\lambda,\mu})_{\lambda,\mu}$ from $\{\mathbf{C}_{\alpha}\}_{\alpha\vdash n}$ to $\{s_{\alpha}\}_{\alpha\vdash n}$ is given by $(d_{\lambda,\mu})_{\lambda,\mu}$, where:
	\begin{align*}
		d_{\lambda,\mu} &= \frac{1}{o(\mu)}\sum_{k|o(\mu)} \phi(k) \chi_{\mu^{(o(\mu)/k)}}^{\lambda}\\
		&=\frac{1}{o(\mu)}\sum_{h\in\langle \sigma_\mu\rangle} \chi^{\lambda}(h).
	\end{align*}
	and $\chi_{\mu^{(o(\mu)/k)}}^{\lambda}$ is the character of the Specht module $S^{\lambda}$ evaluated at $\mu^{(o(\mu)/k)}$.
\end{prop}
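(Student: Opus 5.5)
The plan is to combine the power-sum expansion of $\mathbf{C}_{\mu}(\mathbf{z})$ (Corollary \ref{ctop}) with the classical expansion of power sums in the Schur basis, exactly as the previous proposition combined it with the monomial expansion. The one external input we use is the Frobenius character formula
$$
p_{\nu}(\mathbf{z}) = \sum_{\lambda\vdash n} \chi^{\lambda}_{\nu}\, s_{\lambda}(\mathbf{z}),
$$
where $\chi^{\lambda}_{\nu}$ is the value of the irreducible character of the Specht module $S^{\lambda}$ on any permutation of cycle type $\nu$. Equivalently, $\chi^\lambda_\nu=\langle p_\nu,s_\lambda\rangle$ for the Hall inner product.

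First, fix $\sigma=\sigma_\mu$ and start from the defining expression
$$
\mathbf{C}_{\mu}(\mathbf{z})=\frac{1}{o(\mu)}\sum_{h\in\langle\sigma_\mu\rangle}p_{\lambda(h)}(\mathbf{z}).
$$
Substituting the Frobenius formula for each $p_{\lambda(h)}$ and exchanging the two finite sums gives
$$
\mathbf{C}_{\mu}(\mathbf{z})=\sum_{\lambda\vdash n}\Bigl(\frac{1}{o(\mu)}\sum_{h\in\langle\sigma_\mu\rangle}\chi^{\lambda}(h)\Bigr)s_{\lambda}(\mathbf{z}).
$$
Since the Schur functions form a basis, this identifies $d_{\lambda,\mu}$ with the second expression in the statement. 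As a side remark, this coefficient is the multiplicity of the trivial representation of $\langle\sigma_\mu\rangle$ in $S^\lambda$. That is consistent with the Schur positivity noted just before the statement: $\mathbf{C}_\mu$ is the Frobenius characteristic of the induced module $\mathrm{Ind}_{\langle\sigma_\mu\rangle}^{S_n}1$, and Frobenius reciprocity gives the same formula.

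Second, to pass to the first expression, we group the elements of $\langle\sigma_\mu\rangle$ by their order. The element $\sigma^j$ has order $k$ exactly when $\gcd(j,o(\mu))=o(\mu)/k$. Hence for each divisor $k\mid o(\mu)$ there are $\phi(k)$ such elements, and they are precisely the generators of the unique subgroup $\langle\sigma^{o(\mu)/k}\rangle$ of order $k$. Any generator $\tau$ of that subgroup is a power of $\sigma^{o(\mu)/k}$ coprime to $k$, so it has the same cycle type $\mu^{(o(\mu)/k)}$. Indeed, raising a permutation to a power coprime to its order preserves its cycle type, since each $\ell$-cycle with $\ell\mid k$ stays an $\ell$-cycle. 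Because $\chi^\lambda$ is a class function, the sum collapses to $\sum_{k\mid o(\mu)}\phi(k)\chi^{\lambda}_{\mu^{(o(\mu)/k)}}$. This is the same regrouping already used to prove the $\phi$-formula for $\mathbf{C}_\alpha(\mathbf{z})$.

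The only step requiring genuine care is this cycle-type invariance under coprime powers. It is what allows the $\phi(k)$ elements of order $k$ to be replaced by a single representative $\sigma^{o(\mu)/k}$. Everything else is linearity together with the standard Frobenius formula. We also note the index conventions: $d_{\lambda,\mu}$ is the coefficient of $s_\lambda$ in $\mathbf{C}_\mu$, matching $a_{\lambda,\mu}$ in Corollary \ref{ctop}.
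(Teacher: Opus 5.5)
Your proof is correct and takes essentially the same route as the paper: you expand $\mathbf{C}_\mu(\mathbf{z})$ in power sums and compose with the Frobenius formula $p_\nu=\sum_\lambda \chi^\lambda_\nu s_\lambda$, which the paper phrases as multiplying the transition matrix of Corollary~\ref{ctop} by the $p$-to-$s$ matrix. The only difference is that you start from the group-sum form and explicitly justify the regrouping into $\phi(k)$ terms (cycle type is invariant under coprime powers), a step the paper absorbs into its earlier $\phi$-formula for $\mathbf{C}_\alpha(\mathbf{z})$.
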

\begin{proof}
	The transition matrix $(a_{\lambda,\mu})$ from $\{\mathbf{C}_{\alpha}(\mathbf{z})\}_{\alpha\vdash n}$ to $\{p_{\alpha}(\mathbf{z})\}_{\alpha\vdash n}$ is given in Corollary \ref{ctop}. It is a standard result that the transition matrix $(b_{\lambda,\mu})$ from $\{p_{\alpha}(\mathbf{z})\}_{\alpha\vdash n}$ to $\{s_{\alpha}(\mathbf{z})\}_{\alpha\vdash n}$ is given by $(\chi_{\lambda}^{\mu})_{\mu,\lambda}$, the character of the Specht module $S^{\mu}$ evaluated at $\lambda$. By multiplying $(a_{\lambda,\mu})(b_{\lambda,\mu})$, we obtain $(d_{\lambda,\mu})$.
\end{proof}
\begin{prop}
	We have
	\[
	\mathbf{C}_{\mu}(\mathbf{z}) = \Frob( \mathrm{ch}\big(\mathrm{Ind}_{\langle \sigma_{\mu}\rangle}^{S_n}\mathbf{1}_{\langle\sigma_mu\rangle}\big))
	\]
	and for every $\lambda\vdash n$, the multiplicity of $s_\lambda$ in $\mathbf{C}_\mu(\mathbf{z})$ is
	\[
	d_{\lambda,\mu}=\langle \mathbf{C}_\mu(\mathbf{z}),s_\lambda(\mathbf{z})\rangle
	=\frac{1}{o(\mu)}\sum_{h\in \langle\sigma_{\mu}\rangle}\chi^\lambda(h),
	\]
	where $\chi^\lambda$ is the irreducible character associated with $S^\mu$. In particular
	$d_{\lambda,\mu}\in\mathbb{Z}_{\ge0}$.
\end{prop}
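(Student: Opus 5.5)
The plan is to identify $\mathbf{C}_\mu(\mathbf{z})$ with the Frobenius characteristic of the permutation representation of $S_n$ on $\mathbf{C}_\mu[n]$. Then the multiplicity of $s_\lambda$ follows from Frobenius reciprocity. The first step is to write $H=\langle\sigma_\mu\rangle$ and note that, by the definition of the molecular species, $\mathbf{C}_\mu[n]=\{\lambda H \mid \lambda\in S_n\}$ is the set of left cosets $S_n/H$. The action $\mathbf{C}_\mu[\tau]\lambda H=(\tau\lambda)H$ is the left-multiplication action. Hence the linearization $\mathbb{C}\,\mathbf{C}_\mu[n]$ is exactly the induced module $\mathrm{Ind}_H^{S_n}\mathbf{1}_H$. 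As recalled just before the statement, the cycle index of a species in degree $n$ is the Frobenius characteristic of its linearized $S_n$-module. This gives the first equality. For a direct check, the character of a permutation module at $\tau$ is $\#\Fix(\mathbf{C}_\mu[\tau])$. Then
\[
\Frob\big(\mathrm{Ind}_H^{S_n}\mathbf{1}_H\big)=\frac{1}{n!}\sum_{\tau\in S_n}\#\Fix(\mathbf{C}_\mu[\tau])\,p_{\lambda(\tau)},
\]
which is the definition of $Z_{\mathbf{C}_\mu}$. It also agrees with the formula $\frac{1}{|H|}\sum_{h\in H}p_{\lambda(h)}$ from the proposition on cycle indices of $X^n/H$, since the induced character formula gives $\#\Fix(\tau)=\frac{1}{|H|}\#\{g: g^{-1}\tau g\in H\}$.

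For the multiplicity, I will use that $\Frob$ is an isometry from class functions on $S_n$ with the standard inner product to $\Lambda_n$ with the Hall inner product, sending $\chi^\lambda$ to $s_\lambda$. So
\[
d_{\lambda,\mu}=\langle \mathbf{C}_\mu(\mathbf{z}),s_\lambda\rangle=\langle \mathrm{Ind}_H^{S_n}\mathbf{1}_H,\chi^\lambda\rangle_{S_n}.
\]
By Frobenius reciprocity this equals $\langle \mathbf{1}_H,\mathrm{Res}_H\chi^\lambda\rangle_H=\frac{1}{|H|}\sum_{h\in H}\chi^\lambda(h)$, and $|H|=o(\sigma_\mu)=o(\mu)$. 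As a consistency check, I can recover the same formula by expanding $\mathbf{C}_\mu(\mathbf{z})=\frac{1}{o(\mu)}\sum_{h\in H}p_{\lambda(h)}$ and using $p_\nu=\sum_\lambda\chi^\lambda_\nu s_\lambda$. Grouping the $h$ by order, that is by $\phi(k)$, recovers the previous proposition.

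Integrality and nonnegativity then come for free. The number $\frac{1}{|H|}\sum_{h\in H}\chi^\lambda(h)$ is the dimension of the $H$-fixed subspace of the Specht module $S^\lambda$, because $\frac{1}{|H|}\sum_h h$ is the projector onto the invariants and its trace is this average. Therefore $d_{\lambda,\mu}\in\mathbb{Z}_{\ge 0}$. The only step needing care is the identification of $\mathbb{C}\,\mathbf{C}_\mu[n]$ with the induced module, including the compatibility of left versus right cosets with the functorial action. Everything else is standard character theory. I would also note that the statement's ``associated with $S^\mu$'' should read $S^\lambda$.
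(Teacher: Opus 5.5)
Your proposal is correct and takes the same route as the paper. You identify $\mathbf{C}_\mu(\mathbf{z})$ with the Frobenius characteristic of the coset permutation module $\mathbb{C}[S_n/\langle\sigma_\mu\rangle]\cong\mathrm{Ind}_{\langle\sigma_\mu\rangle}^{S_n}\mathbf{1}$ and then apply Frobenius reciprocity; your fixed-point check and the projector argument giving $d_{\lambda,\mu}=\dim (S^\lambda)^{\langle\sigma_\mu\rangle}$ just fill in details the paper leaves implicit, and you are right that $S^\mu$ should read $S^\lambda$.
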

\begin{proof}
	The first equality arises from the identification between the cycle index series of
	$X^n/G$ and the Frobenius characteristic of the permutation module on the cosets
	$S_n/G$. By Frobenius reciprocity,
	\[
	\langle \mathrm{Ind}_G^{S_n}\mathbf{1}_G , S^\lambda\rangle_{S_n}
	= \langle \mathbf{1}_G, \mathrm{Res}^{S_n}_G S^\lambda\rangle_G
	= \frac{1}{|G|}\sum_{h\in G}\chi^\lambda(h).
	\]
	The stated formula and the integer positivity follow by setting $G=\langle \sigma_{\mu}\rangle$.
\end{proof}
We consider the following example:
\begin{exemple}
$$
\mathbf{C}_{11111}(\mathbf{z}) = s_{5} + 4 s_{41} + 5 s_{32} + 6 s_{311} + 5 s_{221} + 4 s_{2111} + s_{11111},
$$
$$
\mathbf{C}_{41}(\mathbf{z}) = s_{5} + s_{41} + s_{32} + s_{311} + 2 s_{221} + s_{2111}.
$$	
\end{exemple}
We now turn our attention to the family $\{\mathbf{K}_{\alpha}(\mathbf{z})\}_{\alpha\vdash n}$.
\begin{prop}\label{k_to_p}
We have 
$$
\mathbf{K}_{\mu}(\mathbf{z}) = \prod_{j=1}^m\frac{1}{i_j}\sum_{\substack{(k_1,k_2,\cdots,k_m)\\ k_j|i_j}}\phi(k_1)\phi(k_2)\cdots\phi(k_m) p_{V^{(k_1,k_2,\ldots,k_m)}(\mu)}.
$$	
where 
$V^{(k_1,k_2,\ldots,k_m)}(\mu)=\left((i_1)^{(i_1/k_1)}\right)^{\mu_1}\left((i_2)^{(i_2/k_2)}\right)^{\mu_2}\cdots \left((i_m)^{(i_m/k_m)}\right)^{\mu_m}$.
\end{prop}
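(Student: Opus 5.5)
The plan is to start from the definition $\mathbf{K}_{\mu}(\mathbf{z}) = \mathbf{C}_{i_1^{\mu_1}}(\mathbf{z})\cdots \mathbf{C}_{i_m^{\mu_m}}(\mathbf{z})$, expand each factor in the power-sum basis, and multiply out. Here $\mu=(i_1^{\mu_1},\ldots,i_m^{\mu_m})$ and the $i_j$ are distinct part sizes. The product formula itself comes from Proposition \ref{propprod}(1) and Proposition \ref{symesp}(1), since $\mathbf{K}_\mu = X^n/G_\mu$ and $G_\mu$ is a direct product of cyclic groups acting on disjoint blocks. So the task reduces to computing each factor $\mathbf{C}_{i^{r}}(\mathbf{z})$ for a rectangular partition $i^r$, and then using multiplicativity of power sums, $p_\lambda p_\nu = p_{\lambda\cup\nu}$.

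\textbf{Rectangular factors.} Let $\tau$ have cycle type $i^r$, i.e.\ $r$ disjoint $i$-cycles. Then $o(\tau)=i$, and for each $d\mid i$ the power $\tau^{d}$ splits each $i$-cycle into $d$ cycles of length $i/d$. Hence $\lambda(\tau^d) = ((i/d)^{d})^{r}$, i.e.\ $r$ copies of the cycle type of the $d$-th power of a single $i$-cycle. In the paper's notation this is $\big(i^{(d)}\big)^{r}$, with $i^{(d)}$ the shape of $\gamma^d$ for an $i$-cycle $\gamma$.

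Apply the earlier proposition $\mathbf{C}_{\alpha}(\mathbf{z})= \frac{1}{o(\sigma)}\sum_{k\mid o(\sigma)} \phi(k)\,p_{\alpha^{(o(\sigma)/k)}}$ with $\alpha=i^r$. Equivalently, count the elements of $\langle\tau\rangle$: the element $\tau^j$ has order $k$ exactly when $\gcd(j,i)=i/k$, and there are $\phi(k)$ such $j$. This gives
\[
\mathbf{C}_{i^{r}}(\mathbf{z}) = \frac{1}{i}\sum_{k\mid i}\phi(k)\, p_{\left(i^{(i/k)}\right)^{r}}(\mathbf{z}).
\]
The one point to check is that $\lambda(\tau^{i/k})$ really is the $r$-fold repetition of $\lambda(\gamma^{i/k})$. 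This holds because the cycles of $\tau$ are disjoint and commute, so the powers act blockwise. Equivalently, it follows from Proposition \ref{iso1} and its corollary, $\mathbf{C}_{i}(\mathbf{z})\circ p_1^{r}=\mathbf{C}_{i^r}(\mathbf{z})$, since plethysm by $p_1^r$ replaces $p_\lambda$ by $p_{\lambda^r}$ at the level of terms.

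\textbf{Multiplying out.} Take the product over $j=1,\ldots,m$ and distribute the sums. This produces a sum over tuples $(k_1,\ldots,k_m)$ with $k_j\mid i_j$. The tuple $(k_1,\ldots,k_m)$ carries the coefficient $\prod_j \frac{1}{i_j}\phi(k_j)$ and the monomial $\prod_j p_{((i_j)^{(i_j/k_j)})^{\mu_j}}$. By multiplicativity of power sums, this monomial equals $p_{V^{(k_1,\ldots,k_m)}(\mu)}$, where $V$ is the union of the partitions $((i_j)^{(i_j/k_j)})^{\mu_j}$ as in the statement. Pulling the constant $\prod_j 1/i_j$ in front gives exactly the displayed formula, with $\prod_{j=1}^m \frac{1}{i_j}$ read as a scalar prefactor of the whole sum.

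\textbf{Main obstacle.} There is no deep step; the only care is bookkeeping of notation. One must confirm that $o(i_j^{\mu_j})=i_j$, so the order does not depend on the multiplicity $\mu_j$. One must also interpret the exponent notation $\left((i_j)^{(i_j/k_j)}\right)^{\mu_j}$ correctly: first the power-type $(i_j/k_j)$ of a single $i_j$-cycle, then $\mu_j$-fold repetition. Finally, the distribution of a product of finite sums into a sum over the Cartesian product of index sets is routine.
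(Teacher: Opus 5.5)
Your proposal is correct and is essentially the paper's own argument: expand each factor $\mathbf{C}_{i_j^{\mu_j}}(\mathbf{z})$ in power sums, distribute the product over tuples $(k_1,\ldots,k_m)$, and merge monomials via $p_\lambda p_\nu=p_{\lambda\cup\nu}$. Your checks that $o(i^r)=i$ and that $\lambda(\tau^{i/k})$ is the $r$-fold repetition of the power-type of a single $i$-cycle supply bookkeeping the paper leaves implicit; the one quibble is that you need not cite Proposition~\ref{symesp} for the product formula, since its transitivity hypothesis fails for $\langle\sigma_{i^r}\rangle$ when $r>1$, whereas the product is simply the paper's definition of $\mathbf{K}_\mu(\mathbf{z})$ (and cycle indices are multiplicative under species product in general anyway).
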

\begin{proof}
This is obtained through a direct calculation:
	\begin{align*}
		\mathbf{K}_{\mu}(\mathbf{z}) &= \mathbf{C}_{i_1^{\mu_1}}(\mathbf{z})\cdot \mathbf{C}_{i_2^{\mu_2}}(\mathbf{z}) \cdots \mathbf{C}_{i_m^{\mu_m}}(\mathbf{z})\\
		&=\frac{1}{i_1}\sum_{k|i_1}\phi(k)p_{(i_1^{\mu_1})^{(i_1/k)}}\cdot \frac{1}{i_2}\sum_{k|i_2}\phi(k)p_{(i_2^{\mu_2})^{(i_2/k)}} \cdots \frac{1}{i_m}\sum_{k|i_m}\phi(k)p_{(i_m^{\mu_m})^{(i_m/k)}}\\
		&=\prod_{j=1}^m\frac{1}{i_j}\sum_{\substack{(k_1,k_2,\cdots,k_m)\\ k_j|i_j}}\phi(k_1)\phi(k_2)\cdots\phi(k_m) p_{(i_1)^{(i_1/k_1)}}^{\mu_1}p_{(i_2)^{(i_2/k_2)}}^{\mu_2}\cdots p_{(i_m)^{(i_m/k_m)}}^{\mu_m}
	\end{align*}
To simplify, we denote the expression $\left(i_1)^{(i_1/k_1)}\right)^{\mu_1}\left((i_2)^{(i_2/k_2)}\right)^{\mu_2}\cdots \left((i_m)^{(i_m/k_m)}\right)^{\mu_m}$ by $V^{(k_1,k_2,\ldots,k_m)}(\mu)$. Hence, we obtain the final expression.
\end{proof}
\begin{cor}
The transition matrix $(w_{\lambda,\mu})_{\lambda,\mu}$ from $\{\mathbf{K}_{\alpha}(\mathbf{z})\}_{\alpha\vdash n}$ to $\{p_{\alpha}(\mathbf{z})\}_{\alpha\vdash n}$ is given by, if $\lambda = (i_1^{\lambda_1},i_2^{\lambda_2},\ldots, i_m^{\lambda_m})$,
\begin{equation*}
	w_{\lambda,\mu} =  
	\begin{cases}
	\prod_{j=1}^m\frac{\phi(k_j)}{i_j} \quad \mbox{ if } \lambda =V^{(k_1,k_2,\ldots,k_m)}(\mu) \quad \mbox{ and } \quad  k_j|i_j \mbox{ for all } j\in [m]\\
	0 \quad \mbox{ otherwise}.
	\end{cases}
\end{equation*}
\end{cor}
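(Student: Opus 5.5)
The plan is to read the matrix entries off the power-sum expansion of Proposition \ref{k_to_p}, in the same way Corollary \ref{ctop} is read off the expansion of $\mathbf{C}_{\mu}(\mathbf{z})$.

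\textbf{Step 1: reduce to single-part blocks.} Write $\mu=(i_1^{\mu_1},\ldots,i_m^{\mu_m})$. By definition $\mathbf{K}_{\mu}(\mathbf{z})=\prod_{j}\mathbf{C}_{i_j^{\mu_j}}(\mathbf{z})$, so we can treat each factor separately. For a single block $i^{r}$, take $\sigma$ to be a product of $r$ disjoint $i$-cycles. Then $o(\sigma)=i$, and $\sigma^{i/k}$ has cycle type $\bigl(k^{r\,i/k}\bigr)$, that is, $(i)^{(i/k)}$ repeated $r$ times. Grouping the powers $\sigma^t$ by $\gcd(t,i)$, exactly $\phi(k)$ of them have order $k$. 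This gives
\[
\mathbf{C}_{i^{r}}(\mathbf{z})=\frac{1}{i}\sum_{k\mid i}\phi(k)\,p_{(i)^{(i/k)}}^{\,r}.
\]
This is the same computation as in Corollary \ref{ctop}, specialized to one part size.

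\textbf{Step 2: expand the product.} Multiply the $m$ factors out. Since $p_{\nu}p_{\rho}=p_{\nu\cup\rho}$, the result is a sum over tuples $(k_1,\ldots,k_m)$ with $k_j\mid i_j$. Each tuple contributes the coefficient $\prod_j \phi(k_j)/i_j$ on the power sum $p_{V^{(k_1,\ldots,k_m)}(\mu)}$. This recovers Proposition \ref{k_to_p}.

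\textbf{Step 3: extract coefficients.} The entry $w_{\lambda,\mu}$ is the coefficient of $p_{\lambda}$ in $\mathbf{K}_{\mu}(\mathbf{z})$. It vanishes unless $\lambda=V^{(k_1,\ldots,k_m)}(\mu)$ for some admissible tuple, and when $\lambda$ has this form it equals the stated product. This gives the case distinction in the statement.

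\textbf{Main obstacle: distinct tuples can give the same $\lambda$.} For example, $k_j=1$ gives $p_1^{i_j\mu_j}$ for every $j$, so several tuples may collapse to the same partition. In that case the true coefficient is the sum $\sum \prod_j \phi(k_j)/i_j$ over all tuples with $V^{(k)}(\mu)=\lambda$. The proof should state this summation explicitly, and read the formula in the statement as this sum when the tuple is not unique. With that interpretation the rest is bookkeeping. I would also check the normalization against a small case, for instance $\mathbf{K}_{2^2}=\mathbf{C}_{22}$, where the formula gives $\tfrac12 p_{1^4}+\tfrac12 p_{22}$.
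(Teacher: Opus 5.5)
Your proposal is correct and follows the paper's route: read the coefficients off the power-sum expansion of Proposition~\ref{k_to_p}. Your Steps 1--2 simply re-derive that proposition. Your caveat about coinciding tuples is well founded, and the paper's one-line proof does not address it. In the paper's own example $\mathbf{K}_{422}(\mathbf{z})$, the coefficient $\frac{1}{4}$ of $p_{221111}$ is the sum of two contributions of $\frac{1}{8}$, one from $(k_1,k_2)=(1,2)$ and one from $(2,1)$. So $w_{\lambda,\mu}$ must be read as the sum of $\prod_j \phi(k_j)/i_j$ over all admissible tuples with $V^{(k_1,\ldots,k_m)}(\mu)=\lambda$.
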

\begin{proof}
The result follows by extracting the transition matrix from Proposition \ref{k_to_p}.
\end{proof}
\begin{ex}
	\begin{align*}
	\mathbf{K}_{422}(\mathbf{z}) = \frac{1}{8} p_{11111111} + \frac{1}{4} p_{221111} + \frac{1}{8} p_{2222} + \frac{1}{4} p_{41111} + \frac{1}{4} p_{422}.	
	\end{align*}
\end{ex}
\begin{prop}
	The transition matrix from $\{\mathbf{K}_{\alpha}(\mathbf{z})\}_{\alpha\vdash n}$ to $\{m_{\alpha}(\mathbf{z})\}_{\alpha\vdash n}$ is given by
	$$
	l_{\lambda,\mu} = \prod_{j=1}^m\frac{1}{o(i_j)}\sum_{\substack{(k_1,k_2,\ldots,k_m)\\k_j|i_j}}\phi(k_1)\phi(k_2)\cdots \phi(k_m) \mathcal{OB}_{V^{(k_1,k_2,\ldots,k_m)(\mu),\lambda}}
	$$
\end{prop}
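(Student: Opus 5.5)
The plan is to mirror the argument given above for the transition matrix from $\{\mathbf{C}_{\alpha}(\mathbf{z})\}_{\alpha\vdash n}$ to $\{m_{\alpha}(\mathbf{z})\}_{\alpha\vdash n}$. That is, we compose two transition matrices already available: first $(w_{\lambda,\mu})$, from $\{\mathbf{K}_{\alpha}(\mathbf{z})\}$ to $\{p_{\alpha}(\mathbf{z})\}$, and then $(\mathcal{OB}_{\lambda,\mu})$, from $\{p_{\alpha}(\mathbf{z})\}$ to $\{m_{\alpha}(\mathbf{z})\}$.

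Concretely, start from the explicit expansion of Proposition \ref{k_to_p}:
$$
\mathbf{K}_{\mu}(\mathbf{z}) = \prod_{j=1}^m\frac{1}{i_j}\sum_{\substack{(k_1,\ldots,k_m)\\ k_j|i_j}}\phi(k_1)\cdots\phi(k_m)\, p_{V^{(k_1,\ldots,k_m)}(\mu)}(\mathbf{z}).
$$
Each power sum $p_{V^{(k_1,\ldots,k_m)}(\mu)}$ is then replaced by its monomial expansion from the lemma of \cite{AMJR}:
$$
p_{\nu}=\sum_{\lambda\vdash n}\mathcal{OB}_{\nu,\lambda}\,m_{\lambda}, \qquad \nu=V^{(k_1,\ldots,k_m)}(\mu).
$$
Because the sum is finite, we may interchange the two summations. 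Collecting the coefficient of $m_{\lambda}$ then gives exactly
$$
l_{\lambda,\mu}=\prod_{j}\frac{1}{i_j}\sum_{(k_j)}\prod_j\phi(k_j)\,\mathcal{OB}_{V^{(k_1,\ldots,k_m)}(\mu),\lambda}.
$$

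One point to address is the normalizing factor $\frac{1}{o(i_j)}$ in the statement. Here $o(i_j)$ is the order of a permutation of type $i_j^{\mu_j}$, which equals $i_j$. This matches the factor $\frac{1}{i_j}$ coming from $\mathbf{C}_{i_j^{\mu_j}}(\mathbf{z})=\frac{1}{i_j}\sum_{k\mid i_j}\phi(k)\,p_{(i_j^{\mu_j})^{(i_j/k)}}$, because the cyclic group $\langle\sigma_{i_j^{\mu_j}}\rangle$ has order $i_j$.

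The main obstacle is purely bookkeeping, namely keeping the index conventions straight. First, in the lemma the indices of $\mathcal{OB}$ are written in swapped order, so I will read $\mathcal{OB}_{\nu,\lambda}$ as the coefficient of $m_\lambda$ in $p_\nu$. Second, $V^{(k_1,\ldots,k_m)}(\mu)$ must be understood as the partition obtained by sorting the concatenated parts. Once these are fixed, the result is the matrix product $(w)\cdot(\mathcal{OB})$, and no further argument is needed.
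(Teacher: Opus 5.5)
Your proposal is correct and follows essentially the paper's route: the paper likewise gets $l_{\lambda,\mu}$ by composing the $\mathbf{K}\to p$ expansion of Proposition~\ref{k_to_p} with the $p\to m$ expansion via $\mathcal{OB}$. Substituting directly into the sum over tuples $(k_1,\ldots,k_m)$, as you do, is slightly more careful than multiplying by $(w_{\lambda,\mu})$ as the corollary literally states it, because distinct tuples can give the same partition $V^{(k_1,\ldots,k_m)}(\mu)$. For instance, for $\mu=422$ with $i_1=4$, $i_2=2$, the tuples $(1,2)$ and $(2,1)$ both give $2^21^4$, so the entries of $w$ must be read as sums over all such tuples.
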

\begin{proof}
	By multiplying the transition matrix $(w_{\lambda,\mu})_{\lambda,\mu}$ from $\{\mathbf{K}_{\alpha}(\mathbf{z})\}_{\alpha\vdash n}$ to $\{p_{\alpha}(\mathbf{z})\}_{\alpha\vdash n}$ with the transition matrix $(\mathcal{OB}_{\lambda,\mu})_{\lambda,\mu}$ from $\{p_{\alpha}(\mathbf{z})\}_{\alpha\vdash n}$ to $\{m_{\alpha}(\mathbf{z})\}_{\alpha\vdash n}$, we obtain the transition matrix from $\{\mathbf{K}_{\alpha}(\mathbf{z})\}_{\alpha\vdash n}$ to $\{m_{\alpha}(\mathbf{z})\}_{\alpha\vdash n}$.
\end{proof}
We thus obtain the following expansion of $\mathbf{K}_{\mu}$ into the monomial basis $\{m_{\alpha}\}_{\alpha\vdash n}$
\begin{cor}
	$$
	\mathbf{K}_{\mu}(\mathbf{z}) = \prod_{j=1}^m \frac{1}{i_j}\sum_{\lambda\vdash n} \sum_{\substack{(k_1,k_2,\ldots,k_m)\\k_j|i_j}}\phi(k_1)\phi(k_2)\cdots \phi(k_2) \mathcal{OB}_{{V^{(k_1,k_2,\ldots,k_m)}(\mu)},\lambda} m_{\lambda}.
	$$
\end{cor}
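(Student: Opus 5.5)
The corollary expands $\mathbf{K}_{\mu}(\mathbf{z})$ in the monomial basis, with the coefficient of $m_\lambda$ equal to $l_{\lambda,\mu}$. I would prove it the same way the $\mathbf{C}_\mu$ monomial expansion was obtained, by composing two known changes of basis.

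First, I start from Proposition \ref{k_to_p}. It writes $\mathbf{K}_{\mu}(\mathbf{z})$ as a finite sum over tuples $(k_1,\ldots,k_m)$ with $k_j\mid i_j$, each term being the weight $\prod_{j}\phi(k_j)/i_j$ times the power sum $p_{V^{(k_1,\ldots,k_m)}(\mu)}$. Since power sums are multiplicative, the product of the individual expansions of $\mathbf{C}_{i_j^{\mu_j}}(\mathbf{z})$ is exactly this single power sum indexed by the concatenated partition $V^{(k_1,\ldots,k_m)}(\mu)$. The constant $\prod_j 1/i_j$ does not depend on the tuple, so it can be pulled outside the sum. This is the only bookkeeping point to check: it is what places $\prod_j \frac{1}{i_j}$ in front of the whole expression in the stated formula.

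Second, I substitute into each term the expansion of the power sums in the monomial basis, i.e.\ the lemma from \cite{AMJR}:
\[
p_{V^{(k_1,\ldots,k_m)}(\mu)}=\sum_{\lambda\vdash n}\mathcal{OB}_{V^{(k_1,\ldots,k_m)}(\mu),\lambda}\,m_\lambda .
\]
All sums are finite, so I can interchange the sum over tuples with the sum over $\lambda\vdash n$. Collecting the coefficient of each $m_\lambda$ gives exactly $l_{\lambda,\mu}$ from the preceding proposition. This matches the matrix product $(w_{\lambda,\mu})(\mathcal{OB})$ used in the proof of that proposition.

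There is no genuine obstacle. The only care needed is consistent indexing of $\mathcal{OB}$: its first index is the shape (the cycle type $V^{(k)}(\mu)$) and its second is the content $\lambda$. Note also that the last factor in the displayed product should read $\phi(k_m)$; the printed $\phi(k_2)$ is a typo.
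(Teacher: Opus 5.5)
Your proof is correct and is essentially the paper's argument: the paper also obtains this expansion by composing the power-sum expansion of Proposition \ref{k_to_p} with the $p\to m$ transition matrix $\mathcal{OB}$. Working tuple-by-tuple, as you do, also keeps the formula valid when distinct tuples $(k_1,\ldots,k_m)$ produce the same partition $V^{(k_1,\ldots,k_m)}(\mu)$ (as for $\mu=422$).

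One small correction to your indexing remark: in brick-tabloid terms the shape is the monomial index $\lambda$, and the bricks have the sizes of the parts of $V^{(k_1,\ldots,k_m)}(\mu)$. So the printed $\mathcal{OB}_{V^{(k_1,\ldots,k_m)}(\mu),\lambda}$ should simply be read as the coefficient of $m_\lambda$ in $p_{V^{(k_1,\ldots,k_m)}(\mu)}$, which the cited lemma denotes $\mathcal{OB}_{\lambda,V^{(k_1,\ldots,k_m)}(\mu)}$; this is exactly how your displayed identity uses it.
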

\begin{proof}
	This is a direct consequence of the previous Proposition, obtained by a change of basis.
\end{proof}
For example, we have:
\begin{align*}
	\mathbf{K}_{42}(\mathbf{z}) &= 90 m_{111111} + 48 m_{21111} + 26 m_{2211} + 15 m_{222} + 18 m_{3111} + 10 m_{321} + \\&4 m_{33} + 6 m_{411} + 4 m_{42} + 2 m_{51} + m_{6}
\end{align*}
\begin{prop}
	The number of $\mathbf{K}_{\mu}$-structures on an $n$-element set is equal to 
	$$
	\prod_{j=1}^m \frac{1}{i_j}\sum_{\mu\vdash n} \sum_{\substack{(k_1,k_2,\ldots,k_m)\\k_j|i_j}}\phi(k_1)\phi(k_2)\cdots \phi(k_2) \mathcal{OB}_{{V^{(k_1,k_2,\ldots,k_m)}(\mu)},1^n}.
	$$
\end{prop}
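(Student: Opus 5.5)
The count will be read off from the coefficient of $m_{1^n}$ in the monomial expansion of $\mathbf{K}_{\mu}(\mathbf{z})$, exactly as in Proposition \ref{count_struct} for $\mathbf{C}_\mu$. The reason is Pólya's theorem: for any species $F$, the coefficient of $m_{1^n}$ in $Z_F$ is the number of $F$-structures on $[n]$. Indeed, specialising $Z_F$ to the coefficient of $x_1x_2\cdots x_n$ counts colourings with all colours distinct, that is, labelled structures. Equivalently, the number of structures equals $n!$ times the coefficient of $p_{1^n}$.

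The first step is to apply this principle to the expansion of $\mathbf{K}_{\mu}(\mathbf{z})$ in the basis $\{m_\lambda\}$ given in the preceding Corollary. Specialising that Corollary at $\lambda=1^n$ immediately yields the stated expression, with $\mathcal{OB}_{V^{(k_1,\ldots,k_m)}(\mu),1^n}$ as the coefficients. Only the $\lambda = 1^n$ term survives, so the outer sum over partitions in the statement should be read as selecting that term; I will note this as a notational point rather than a true sum.

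The second step is a consistency check via the $p_{1^n}$ route. From Proposition \ref{k_to_p}, only the choice $k_j=1$ for all $j$ produces $p_{1^n}$, since $\sigma^{i_j}$ restricted to each block is the identity. That term has coefficient $\prod_j 1/i_j = 1/|G_\mu|$. Hence the number of structures is $n!/|G_\mu|$, matching the general count $|S_n/G_\mu|$ of cosets in $(X^n/G_\mu)[n]$. As a byproduct, this gives the identity $\sum \prod\phi(k_j)\,\mathcal{OB}_{V^{(k)}(\mu),1^n}=n!$, in analogy with the corollary for $\mathbf{C}_\mu$.

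The only delicate point is verifying that $\mathcal{OB}_{\nu,1^n}$ correctly encodes the $m_{1^n}$-coefficient of $p_\nu$. This holds because $\mathcal{OB}_{\nu,1^n}=n!/\prod_j \nu_j!$, the multinomial count of fillings of each brick with distinct labels, which is indeed the coefficient of $x_1\cdots x_n$ in $p_\nu$. With that, the proposition follows from linearity, and no further obstacle is expected.
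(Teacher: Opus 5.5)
Your first step is the paper's own proof. The paper invokes Pólya's theorem to say that the coefficient of $m_{1^n}$ in $Z_{\mathbf{K}_\mu}$ counts the $\mathbf{K}_\mu$-structures, and it reads this coefficient off the preceding monomial expansion, just as you do. Your second step is correct and adds something the paper leaves implicit. Only $k_1=\cdots=k_m=1$ produces $p_{1^n}$, with coefficient $\prod_j 1/i_j=1/|G_\mu|$, so the count is $n!/\prod_j i_j$.

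Your final paragraph, however, is wrong, and it is exactly the point you single out as delicate. The coefficient of $x_1x_2\cdots x_n$ in $p_\nu$ is not $n!/\prod_j \nu_j!$. Every monomial of $p_k=\sum_i x_i^k$ with $k\ge 2$ contains a variable to a power at least $2$. Hence $\mathcal{OB}_{\nu,1^n}=n!$ if $\nu=1^n$ and $0$ otherwise; in brick-tabloid terms, a brick of length at least $2$ cannot fit in a row of length $1$. The multinomial you wrote is the $m_{1^n}$-coefficient of $h_\nu$ (or $e_\nu$), not of $p_\nu$.

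With your value the stated formula would be false, and it would contradict your own second step. Take $\mu=(2)$, where $\mathbf{K}_2=\mathbf{E}_2$ has exactly one structure on a $2$-set. Your value gives $\frac12\bigl(\phi(1)\cdot 2+\phi(2)\cdot 1\bigr)=\frac32$, which is not even an integer.

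Replace that paragraph with the correct value of $\mathcal{OB}_{\nu,1^n}$. Then the sum in the statement collapses to the single term $k=(1,\ldots,1)$, and the expression equals $n!/\prod_j i_j$, in agreement with your $p_{1^n}$ check. Your ``byproduct identity'' becomes the trivial equality $n!=n!$, not an independent combinatorial identity.
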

\begin{proof}
	By Pólya's theorem, the coefficient of $m_{1^n}$ in the cycle index series of a species yields the number of these structures.
\end{proof}
\begin{prop}
	The transition matrix $(t_{\lambda,\mu})_{\lambda,\mu}$ from $\{\mathbf{K}_{\alpha}\}_{\alpha\vdash n}$ to $\{s_{\alpha}\}_{\alpha\vdash n}$ is given by:
	\begin{align*}
		t_{\lambda,\mu} &= \prod_{j=1}^m\frac{1}{i_j}\sum_{\substack{(k_1,k_2,\ldots,k_m)\\k_j|i_j}}\phi(k_1)\phi(k_2)\cdots \phi(k_2) \chi_{{V^{(k_1,k_2,\ldots,k_m)}(\mu)}}^{\lambda}
	\end{align*}
	where $\chi^{\lambda}_{\mu^{(o(\mu)/k)}}$ is the character of the Specht module $S^{\lambda}$ evaluated at $\mu^{(o(\mu)/k)}$.
\end{prop}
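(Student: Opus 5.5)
The plan mirrors the argument used for the transition matrix from $\{\mathbf{C}_{\alpha}\}$ to $\{s_{\alpha}\}$: compose the $\mathbf{K}$-to-$p$ transition matrix with the $p$-to-$s$ transition matrix, then cross-check the result through Frobenius reciprocity.

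First, take the power-sum expansion of Proposition \ref{k_to_p}, or equivalently the matrix $(w_{\lambda,\mu})$ of the corollary following it. For each tuple $(k_1,\ldots,k_m)$ with $k_j\mid i_j$, the function $\mathbf{K}_{\mu}(\mathbf{z})$ carries the coefficient $\prod_{j}\phi(k_j)/i_j$ on $p_{V^{(k_1,\ldots,k_m)}(\mu)}$. Next, substitute the classical expansion $p_{\nu}=\sum_{\lambda\vdash n}\chi^{\lambda}_{\nu}\,s_{\lambda}$, already used in the $\mathbf{C}$ case, into each term with $\nu=V^{(k_1,\ldots,k_m)}(\mu)$. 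Collecting the coefficient of $s_\lambda$ gives
\[
t_{\lambda,\mu}=\prod_{j=1}^m\frac{1}{i_j}\sum_{\substack{(k_1,\ldots,k_m)\\ k_j\mid i_j}}\phi(k_1)\cdots\phi(k_m)\,\chi^{\lambda}_{V^{(k_1,\ldots,k_m)}(\mu)} .
\]
In matrix form this is the product $(w_{\lambda,\mu})(b_{\lambda,\mu})$, exactly as in the earlier proposition.

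The step that needs care is the bookkeeping behind $V^{(k_1,\ldots,k_m)}(\mu)$. The normalizing constant $\prod_j 1/i_j$ must equal $1/|G_\mu|$, because $G_\mu$ is a direct product of cyclic groups of orders $i_j$. The sum over tuples $(k_j)$ weighted by $\prod_j\phi(k_j)$ must also enumerate the elements of $G_\mu$ according to their cycle type. I would justify this factor by factor. In the $j$-th factor $\langle\sigma_{i_j^{\mu_j}}\rangle$, which is cyclic of order $i_j$ and generated by a product of $\mu_j$ disjoint $i_j$-cycles, exactly $\phi(k)$ elements have order $k$, for each $k\mid i_j$. Each such element acts on every $i_j$-cycle as a power with $i_j/k$ cycles of length $k$. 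Its cycle type is therefore $\big((i_j)^{(i_j/k)}\big)^{\mu_j}$. Cycle types of elements of a direct product concatenate, which yields $V^{(k_1,\ldots,k_m)}(\mu)$.

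As a consistency check, and to get integrality and nonnegativity, I would rewrite the formula as $t_{\lambda,\mu}=\frac{1}{|G_\mu|}\sum_{h\in G_\mu}\chi^\lambda(h)$. Then I would apply Frobenius reciprocity, as in the preceding proposition, to identify it with $\langle \mathrm{Ind}_{G_\mu}^{S_n}\mathbf{1},S^\lambda\rangle\in\mathbb{Z}_{\ge 0}$. The main potential obstacle is notational rather than mathematical. The statement writes $\phi(k_2)$ where $\phi(k_m)$ is meant, and places the product $\prod_j 1/i_j$ outside the sum. I would read both as the intended formula above and note that the constant factors out of the sum, so no further difficulty arises.
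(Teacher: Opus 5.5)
Your proposal is correct and follows the paper's route: compose the $\mathbf{K}$-to-$p$ matrix $(w_{\lambda,\mu})$ with the character-table expansion $p_\nu=\sum_\lambda\chi^\lambda_\nu s_\lambda$. The paper's written proof cites the $\mathbf{C}$-matrix $(a_{\lambda,\mu})$ of Corollary \ref{ctop} instead, evidently a slip, and you use the right one; your checks that $\prod_j\phi(k_j)$ counts the elements of $G_\mu$ of cycle type $V^{(k_1,\ldots,k_m)}(\mu)$ and that $t_{\lambda,\mu}=\frac{1}{|G_\mu|}\sum_{h\in G_\mu}\chi^\lambda(h)\in\mathbb{Z}_{\ge0}$ by Frobenius reciprocity are correct additions that the paper only carries out for the $\mathbf{C}$ family.
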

\begin{proof}
	The transition matrix $(a_{\lambda,\mu})$ from $\{\mathbf{C}_{\alpha}(\mathbf{z})\}_{\alpha\vdash n}$ to $\{p_{\alpha}(\mathbf{z})\}_{\alpha\vdash n}$ is given in Corollary \ref{ctop}. It is a standard result that the transition matrix $(b_{\lambda,\mu})$ from $\{p_{\alpha}(\mathbf{z})\}_{\alpha\vdash n}$ to $\{s_{\alpha}(\mathbf{z})\}_{\alpha\vdash n}$ is given by $(\chi_{\lambda}^{\mu})_{\mu,\lambda}$, the character of the Specht module $S^{\mu}$ evaluated at $\lambda$. Multiplying $(a_{\lambda,\mu})(b_{\lambda,\mu})$ yields $(t_{\lambda,\mu})$.
\end{proof}
\begin{cor}
	We have 
	$$
	\mathbf{K}_{\mu}(\mathbf{z}) = \sum_{\lambda\vdash n} t_{\lambda,\mu} s_{\lambda}(\mathbf{z}).
	$$
\end{cor}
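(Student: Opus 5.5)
The corollary should follow directly from the preceding Proposition, which identifies $(t_{\lambda,\mu})$ as the transition matrix from $\{\mathbf{K}_{\alpha}(\mathbf{z})\}_{\alpha\vdash n}$ to $\{s_{\alpha}(\mathbf{z})\}_{\alpha\vdash n}$. I would not rely only on that statement, however. I would redo the computation explicitly so that the indices line up with the convention $\mathbf{K}_{\mu}=\sum_\lambda t_{\lambda,\mu}s_\lambda$.

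The first step is to start from Proposition \ref{k_to_p}. It expresses $\mathbf{K}_{\mu}(\mathbf{z})$ as
\[
\prod_{j}\frac{1}{i_j}\sum_{(k_1,\ldots,k_m),\,k_j\mid i_j}\phi(k_1)\cdots\phi(k_m)\,p_{V^{(k_1,\ldots,k_m)}(\mu)}.
\]
The second step is to substitute the Frobenius character formula for each power sum. This is the standard expansion $p_\nu=\sum_{\lambda\vdash n}\chi^\lambda_\nu s_\lambda$, where $\chi^\lambda_\nu$ is the character of the Specht module $S^\lambda$ evaluated on cycle type $\nu$. Since all sums are finite, I can then interchange the sum over $(k_1,\ldots,k_m)$ with the sum over $\lambda$. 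The coefficient of $s_\lambda$ obtained this way is exactly the expression defining $t_{\lambda,\mu}$.

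As a consistency check, I would give a second, conceptual argument for the same coefficients. By Proposition \ref{symesp} and the remark on Frobenius characteristics, $\mathbf{K}_\mu(\mathbf{z})$ is the Frobenius characteristic of $\mathrm{Ind}_{G_\mu}^{S_n}\mathbf{1}$. Frobenius reciprocity then gives
\[
t_{\lambda,\mu}=\frac{1}{|G_\mu|}\sum_{h\in G_\mu}\chi^\lambda(h),
\]
where $|G_\mu|=\prod_j i_j$. The elements of $G_\mu=\prod_j\langle\sigma_{i_j^{\mu_j}}\rangle$ are grouped by the orders $k_j$ of their components, and there are $\phi(k_j)$ generators of each order. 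This grouping recovers the same sum, and it also shows that $t_{\lambda,\mu}\in\mathbb{Z}_{\ge 0}$.

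The only delicate point is bookkeeping. One has to check that the cycle type of an element of $G_\mu$ whose $j$-th component has order $k_j$ is indeed $V^{(k_1,\ldots,k_m)}(\mu)$. One also has to keep the index convention consistent: the proposition's proof cites the $\mathbf{C}$-matrix, while the $\mathbf{K}$-matrix $(w_{\lambda,\mu})$ is the one actually needed, and the matrix product must be taken in the order that makes $\mu$ the column index.
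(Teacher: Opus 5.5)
Your proposal is correct and is the paper's argument: the paper obtains $t_{\lambda,\mu}$ by combining the power-sum expansion of Proposition~\ref{k_to_p} with $p_\nu=\sum_{\lambda}\chi^\lambda_\nu s_\lambda$, and the corollary simply restates that result, so your substitute-and-interchange computation (with the Frobenius reciprocity formula as an optional check) is the same proof. Substituting directly into the sum over tuples $(k_1,\ldots,k_m)$, rather than going through the matrix $w_{\lambda,\mu}$ as literally stated, is the safer bookkeeping, because distinct tuples can give the same partition $V^{(k_1,\ldots,k_m)}(\mu)$ (for $\mu=422$, the tuples $(2,1)$ and $(1,2)$ both give $2^21^4$).
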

\begin{ex}
$$
\mathbf{K}_{421}(\mathbf{z}) =s_{7} + 3 s_{52} + 2 s_{61} + 2 s_{43} + 2 s_{511} + 5 s_{421} + 2 s_{331} + 3 s_{322} + 4 s_{3211} + 2 s_{2221} + 2 s_{4111} + s_{31111} + s_{22111}.
$$
\end{ex}

\section{Extending Garsia–Remmel: Closure Theorems for cyclic molecules}
This section presents the principal contribution of this work: the extension of the Garsia–Remmel theorem to the newly constructed basis, establishing the closure of $\mathbf{C}_{\alpha}$ and $\mathbf{K}_{\alpha}$ under the Kronecker product. To achieve a comprehensive species-theoretic interpretation, we first introduce the subcategories $\mathrm{C}\mathrm{Esp}$ and $\mathrm{K}\mathrm{Esp}$ which house our new basis. We begin by reformulating the classical Garsia–Remmel result within this category framework and then proceed to establish the analogous decomposition theorems for the two cyclic basis.
\begin{defin}
	We define the category $\mathrm{H}\mathrm{Esp}$ whose objects are non-negative integer linear combinations of $\mathbf{E}_{\mu}$, i.e.,
	$$
	F = \sum_{\mu\vdash n} a_{\mu} \mathbf{E}_{\mu} \quad \mbox{ where $a_{\mu}\in\mathbb{N}$},
	$$
	and the morphisms are natural transformations (species morphisms).
\end{defin}
\begin{ex}\label{exdeco}
	An example is given by
	$$
	F = \mathbf{E}_{3} + 2\mathbf{E}_{21} + 3\mathbf{E}_{111}.
	$$
	This structure can be interpreted as a collection where the coefficient of each $\mathbf{E}_{\lambda}$ term represents the number of distinct colorings or labellings available for the $\mathbf{E}_{\lambda}$-structure. Specifically:
\begin{itemize}
\item The $\mathbf{E}_{3}$-structure (coefficient $1$) has unit multiplicity.
\item The $\mathbf{E}_{21}$-structure (coefficient $2$) admits two distinct decorations (e.g., \textit{blue} or \textit{red}).
\item The $\mathbf{E}_{111}$-structure (coefficient $3$) admits three distinct decorations (e.g., \textit{blue}, \textit{red}, or \textit{green}).
\end{itemize}
In general, the coefficient of $\mathbf{E}_{\lambda}$ acts as a multiplicity index, indicating the number of ways the corresponding abstract structure $\mathbf{E}_{\lambda}$ is realized or "decorated" within $F$.
\end{ex}
\begin{lem}
	Let $n$ be a positive integer. For all partitions $\alpha$ and $\beta$ of $n$, there exists a partition $\mu$ of $n$ such that
	$$
	S_{\alpha}\cap S_{\beta} = S_{\mu}.
	$$
\end{lem}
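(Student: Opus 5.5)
Here $S_{\alpha}$ denotes the standard Young subgroup attached to $\alpha=(\alpha_1,\ldots,\alpha_k)$. Set $A_1=\{1,\ldots,\alpha_1\}$, $A_2=\{\alpha_1+1,\ldots,\alpha_1+\alpha_2\}$, and so on, so that $[n]=A_1\sqcup\cdots\sqcup A_k$ and $S_{\alpha}$ is the stabilizer of this set partition. Define $B_1,\ldots,B_l$ from $\beta$ in the same way. The plan is to view both groups as stabilizers of set partitions, so that their intersection is again a stabilizer. Concretely, $S_{\alpha}=\{\sigma\in S_n : \sigma(A_i)=A_i \text{ for all } i\}$, and similarly for $S_{\beta}$. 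Hence $S_{\alpha}\cap S_{\beta}$ consists of the permutations preserving every $A_i$ and every $B_j$.

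The first step is to show that $S_{\alpha}\cap S_{\beta}=\prod_{i,j} S_{A_i\cap B_j}$, where $S_{\emptyset}$ is trivial. For the inclusion $\supseteq$: a permutation supported blockwise on the sets $A_i\cap B_j$ preserves every $A_i=\bigsqcup_j (A_i\cap B_j)$ and every $B_j$. For the inclusion $\subseteq$: if $\sigma$ preserves $A_i$ and $B_j$, then $\sigma(A_i\cap B_j)=\sigma(A_i)\cap\sigma(B_j)=A_i\cap B_j$, since $\sigma$ is a bijection. Since the nonempty sets $A_i\cap B_j$ partition $[n]$, $\sigma$ restricts to a permutation of each of them.

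The second step is to let $\mu$ be the partition obtained by sorting the nonzero sizes $|A_i\cap B_j|$ in decreasing order. These sizes are the entries of the nonnegative integer matrix with row sums $\alpha_i$ and column sums $\beta_j$, which matches the Garsia--Remmel description recalled in the introduction. It remains to identify $\prod S_{A_i\cap B_j}$ with $S_{\mu}$. The only obstacle is that the blocks $A_i\cap B_j$ are intervals of $[n]$, but listed in lexicographic order of $(i,j)$ they need not appear in decreasing order of size. So the equality $S_{\alpha}\cap S_{\beta}=S_{\mu}$ holds on the nose only up to conjugation.

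I would handle this in one of two ways. Either read the lemma, as the paper does throughout (compare Lemma \ref{espconj} and the proof of Lemma \ref{specyc}, where subgroups are identified up to conjugacy), as an equality up to conjugation. Or, more concretely, choose a permutation $\tau$ that maps the blocks, in decreasing order of size, onto consecutive intervals; then $\tau(S_{\alpha}\cap S_{\beta})\tau^{-1}=S_{\mu}$. Either way, Lemma \ref{espconj} gives $X^n/(S_{\alpha}\cap S_{\beta})=\mathbf{E}_{\mu}$, which is the form needed afterwards, together with Proposition \ref{propprod}, to show that $\mathrm{H}\mathrm{Esp}$ is closed under $\times$.
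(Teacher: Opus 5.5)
Your proof is correct and follows the same route as the paper: both refine the blocks of $\alpha$ and $\beta$ into the sets $A_i\cap B_j$, use $\sigma(A_i\cap B_j)=\sigma(A_i)\cap\sigma(B_j)=A_i\cap B_j$, and let $\mu$ be the sorted list of nonzero sizes $|A_i\cap B_j|$. Your argument is in fact more complete than the paper's, which proves only the inclusion $S_{\alpha}\cap S_{\beta}\subseteq\prod_{i,j}S_{A_i\cap B_j}$ and silently identifies that product with $S_{\mu}$; you prove both inclusions and rightly note that equality with the standard Young subgroup holds only up to conjugation (for $\alpha=(3,3)$, $\beta=(4,2)$ the blocks are $\{1,2,3\},\{4\},\{5,6\}$, so $(5\,6)\in S_{\alpha}\cap S_{\beta}$ but $(5\,6)\notin S_{(3,2,1)}$), which is the reading the paper actually uses afterwards.
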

\begin{proof}
	Let $\sigma\in S_{\alpha}\cap S_{\beta}$. This means $\sigma$ leaves invariant both partitions of $[n]$ associated with $\alpha$ and $\beta$, denoted $\underline{n}^{\alpha}$ and $\underline{n}^{\beta}$. That is, for all $i,j\in [n]$,
	$$
	\sigma(\underline{n}_i^{\alpha})=\underline{n}_i^{\alpha} \quad \mbox{ and } \quad \sigma(\underline{n}_j^{\beta})=\underline{n}_j^{\beta}.
	$$
	Thus, $\sigma(\underline{n}_i^{\alpha}\cap \underline{n}_j^{\beta})=\underline{n}_i^{\alpha}\cap \underline{n}_j^{\beta}$. By defining $\mu$ as the partition formed by the sizes of these non-empty intersections, $\mu = (|\underline{n}_i^{\alpha}\cap \underline{n}_j^{\beta}|)$, we have $\sigma\in S_{\mu}$.
\end{proof}
\begin{prop}
	Let $n$ be a positive integer. For all partitions $\alpha$ and $\beta$ of $n$, we have
	$$
	\mathbf{E}_{\alpha}\times \mathbf{E}_{\beta} = \sum_{\mu\vdash n}e_{\alpha,\beta}^{\mu} \mathbf{E}_{\mu},
	$$
	where $e_{\alpha,\beta}^{\mu}=\#\{S_{\alpha}\tau S_{\beta} \in S_{\alpha}\backslash S_{\underline{n}}\slash S_{\beta}\; |\; S_{\alpha}\cap \tau S_{\beta}\tau^{-1} \text{ is conjugate to } S_{\mu}\}$.
\end{prop}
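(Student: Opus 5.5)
We obtain the decomposition directly from Proposition~\ref{propprod}(2), applied with $H=S_{\alpha}$ and $K=S_{\beta}$. Since $\mathbf{E}_{\alpha}=X^n/S_{\alpha}$ and $\mathbf{E}_{\beta}=X^n/S_{\beta}$, that proposition gives
\[
\mathbf{E}_{\alpha}\times\mathbf{E}_{\beta}=\sum_{\tau\in S_{\alpha}\backslash S_n/S_{\beta}} X^n/\bigl(S_{\alpha}\cap \tau S_{\beta}\tau^{-1}\bigr),
\]
where the sum runs over a set of double coset representatives. The rest of the proof consists of identifying each summand with some $\mathbf{E}_{\mu}$ and then grouping equal terms.

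Fix a representative $\tau$. The conjugate $\tau S_{\beta}\tau^{-1}$ is the stabilizer of the set partition $\tau(\underline{n}^{\beta})=\{\tau(\underline{n}^{\beta}_j)\}_j$, whose block sizes are still $\beta$. The argument of the preceding lemma applies verbatim to any two Young subgroups attached to set partitions, not only the standard ones. Hence $S_{\alpha}\cap\tau S_{\beta}\tau^{-1}$ is exactly the pointwise stabilizer of the common refinement $\{\underline{n}^{\alpha}_i\cap\tau(\underline{n}^{\beta}_j)\}$, restricted to its non-empty blocks.

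We also need the reverse inclusion, which the lemma's proof leaves implicit. Any permutation preserving each block $\underline{n}^{\alpha}_i\cap\tau(\underline{n}^{\beta}_j)$ preserves every union of such blocks, so it preserves each $\underline{n}^{\alpha}_i$ and each $\tau(\underline{n}^{\beta}_j)$. It therefore lies in both subgroups.

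It follows that the intersection is a Young subgroup $S_{B_1}\times\cdots\times S_{B_r}$ on a set partition $\{B_1,\dots,B_r\}$ of $[n]$. Let $\mu_\tau$ be the partition obtained by sorting the sizes $|B_k|$. A relabelling bijection $g\in S_n$ carrying the blocks onto consecutive intervals conjugates this subgroup to the standard $S_{\mu_\tau}$. Lemma~\ref{espconj} then gives $X^n/(S_{\alpha}\cap\tau S_{\beta}\tau^{-1})=X^n/S_{\mu_\tau}=\mathbf{E}_{\mu_\tau}$.

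Next we check that $\mu_\tau$ depends only on the double coset and not on the chosen representative. Replacing $\tau$ by $h\tau k$ with $h\in S_{\alpha}$ and $k\in S_{\beta}$ replaces the intersection by $h\bigl(S_{\alpha}\cap\tau S_{\beta}\tau^{-1}\bigr)h^{-1}$, a conjugate subgroup. So the isomorphism type of the summand is well defined, which is also implicit in Proposition~\ref{propprod}.

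Finally, we collect equal terms. By Lemma~\ref{specyc}(1), $\mathbf{E}_{\mu}=\mathbf{E}_{\nu}$ if and only if $\mu=\nu$. The coefficient of $\mathbf{E}_{\mu}$ is therefore the number of double cosets whose associated intersection is conjugate to $S_{\mu}$, which is exactly $e^{\mu}_{\alpha,\beta}$.

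The main obstacle is the careful identification of the intersection as a full Young subgroup, namely the reverse inclusion above. The rest is bookkeeping.

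As a sanity check linking to Garsia--Remmel, double cosets $S_{\alpha}\backslash S_n/S_{\beta}$ are in bijection with nonnegative integer matrices with row sums $\alpha$ and column sums $\beta$, via the entries $|\underline{n}^{\alpha}_i\cap\tau(\underline{n}^{\beta}_j)|$. Under this bijection $\mu_\tau$ is the sorted multiset of nonzero entries, so $e^{\mu}_{\alpha,\beta}=NM^{\mu}_{\alpha,\beta}$. Applying Proposition~\ref{symesp}(2) recovers $h_{\alpha}\star h_{\beta}=\sum_\mu NM^{\mu}_{\alpha,\beta}h_\mu$. We would mention this as a remark but not need it for the proof.
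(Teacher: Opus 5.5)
Your proposal is correct and follows the same route as the paper. You apply Proposition~\ref{propprod}(2) with $H=S_{\alpha}$ and $K=S_{\beta}$, recognize each $S_{\alpha}\cap\tau S_{\beta}\tau^{-1}$ as conjugate to a Young subgroup $S_{\mu}$, and group the double cosets by $\mu$. Your version is more complete than the paper's: you prove the reverse inclusion (the paper's lemma shows only $S_{\alpha}\cap S_{\beta}\subseteq S_{\mu}$, and only for $\tau=\mathrm{id}$), check independence of the double-coset representative, and use non-conjugacy of distinct Young subgroups so that each double coset is counted for exactly one $\mu$.
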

\begin{proof}
Applying a fundamental decomposition result (Proposition \ref{propprod} from the full manuscript), we have
	\begin{align*}
		\left(X^n/S_{\alpha}\right)\times \left(X^n/S_{\beta}\right) &= \sum\limits_{S_{\alpha}\tau S_{\beta} \in S_{\alpha}\backslash S_n\slash S_{\beta}} \left(X^n/S_{\alpha}\cap \tau S_{\beta}\tau^{-1}\right)\\
		&=\sum_{\mu\vdash n} \#\{S_{\alpha}\tau S_{\beta} \in S_{\alpha}\backslash S_n\slash S_{\beta} \; |\; S_{\alpha}\cap \tau S_{\beta}\tau^{-1}=S_{\mu}\}\left(X^n/S_{\mu}\right).
	\end{align*}
\end{proof}
\begin{cor}
	The category $\mathrm{HEsp}$ is closed under the Cartesian product (or Hadamard product).
\end{cor}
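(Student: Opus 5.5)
The plan is to show that if $F=\sum_{\alpha} a_\alpha \mathbf{E}_\alpha$ and $G=\sum_{\beta} b_\beta \mathbf{E}_\beta$ are objects of $\mathrm{HEsp}$, then $F\times G$ is again a non-negative integer combination of set molecules $\mathbf{E}_\mu$. The first step is bilinearity: the Cartesian product of species distributes over sums, since $(F_1+F_2)\times G=F_1\times G+F_2\times G$ holds naturally because $(F_1+F_2)[U]=F_1[U]\sqcup F_2[U]$. Iterating this gives
$$
F\times G=\sum_{\alpha,\beta} a_\alpha b_\beta\,(\mathbf{E}_\alpha\times\mathbf{E}_\beta).
$$
Here the $\alpha,\beta$ range over partitions of the relevant sizes. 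Products of components of different cardinalities are zero, because $F\times G$ is concentrated on sets of a single size only when both factors are. So only pairs with $|\alpha|=|\beta|$ contribute.

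The second step applies the preceding Proposition to each pair. It gives $\mathbf{E}_\alpha\times\mathbf{E}_\beta=\sum_{\mu} e^{\mu}_{\alpha,\beta}\mathbf{E}_\mu$, where $e^{\mu}_{\alpha,\beta}$ counts double cosets and is therefore a non-negative integer. The justification for landing in set molecules is that each summand in Proposition \ref{propprod}(2) is $X^n/(S_\alpha\cap \tau S_\beta\tau^{-1})$. Since $\tau S_\beta \tau^{-1}$ is the stabilizer of the set partition $\tau(\underline{n}^\beta)$, the preceding Lemma applies to it as well as to $S_\beta$. The intersection is the Young subgroup attached to the blocks $\underline{n}^\alpha_i\cap\tau(\underline{n}^\beta_j)$, so it is conjugate to some $S_\mu$. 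Lemma \ref{espconj} then identifies each summand with $\mathbf{E}_\mu$.

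The last step collects terms. This gives $F\times G=\sum_\mu c_\mu \mathbf{E}_\mu$ with
$$
c_\mu=\sum_{\alpha,\beta} a_\alpha b_\beta e^\mu_{\alpha,\beta}\in\mathbb{N},
$$
which is an object of $\mathrm{HEsp}$. For the morphism part, the product of two natural transformations is a natural transformation, so $\times$ is a bifunctor on $\mathrm{HEsp}$.

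The main obstacle I expect is the careful justification that the intersection with a \emph{conjugate} $\tau S_\beta\tau^{-1}$, rather than $S_\beta$ itself, is still a Young subgroup up to conjugacy. The preceding Lemma is stated only for standard $S_\alpha\cap S_\beta$. I would handle this by rerunning its argument with an arbitrary set partition in place of $\underline{n}^\beta$, and also checking the reverse inclusion: every permutation preserving each block $\underline{n}^\alpha_i\cap\tau(\underline{n}^\beta_j)$ lies in both groups.
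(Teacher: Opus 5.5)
Your proposal is correct and follows the paper's proof. You distribute the Cartesian product over the sums, expand each $\mathbf{E}_\alpha\times\mathbf{E}_\beta$ via the preceding Proposition, and collect the nonnegative integer coefficients $\sum_{\alpha,\beta}a_\alpha b_\beta e^{\mu}_{\alpha,\beta}$. Your extra check that $S_\alpha\cap\tau S_\beta\tau^{-1}$, and not just $S_\alpha\cap S_\beta$, is conjugate to a Young subgroup (via the blocks $\underline{n}^\alpha_i\cap\tau(\underline{n}^\beta_j)$) is a point the paper's Lemma leaves implicit, and your argument for it, including the reverse inclusion, is sound.
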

\begin{proof}
As the Cartesian product of species is commutative and distributive (see \cite{JOYAL19811}\cite{FGP}), we have
\begin{align*}
\sum_{\mu\vdash n} a_{\mu} \mathbf{E}_{\mu} \times \sum_{\mu\vdash n} b_{\mu} \mathbf{E}_{\mu} &= \sum_{\alpha,\beta\vdash n} (a_{\alpha} b_{\beta}) \mathbf{E}_{\alpha}\times \mathbf{E}_{\beta}\\
&=\sum_{\alpha,\beta\vdash n} (a_{\alpha} b_{\beta})\sum_{\mu\vdash n}e_{\alpha,\beta}^{\mu} \mathbf{E}_{\mu}\\
&=\sum_{\mu\vdash n}\sum_{\alpha,\beta\vdash n} (a_{\alpha} b_{\beta})e_{\alpha,\beta}^{\mu} \mathbf{E}_{\mu}.
\end{align*}
\end{proof}
\begin{defin}
	For every finite set $U$, we define the set $$\mathbf{E}_{\alpha,\beta}^{\mu}[U] = \{(s,t)\in (\mathbf{E}_{\alpha}\times \mathbf{E}_{\beta})[U] \mid \Aut(s) \cap\Aut(t)\simeq S_{\mu}\}.$$
\end{defin}
\begin{prop}
	The transformation $\mathbf{E}_{\alpha,\beta}^{\mu}:\mathbb{B}\to \Ens$ is a subspecies of the Cartesian product $\mathbf{E}_{\alpha}\times \mathbf{E}_{\beta}$, i.e.,
	$$
		\mathbf{E}_{\alpha,\beta}^{\mu} \subset \mathbf{E}_{\alpha}\times \mathbf{E}_{\beta}.
	$$
\end{prop}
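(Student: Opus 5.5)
To prove that $\mathbf{E}_{\alpha,\beta}^{\mu}$ is a subspecies of $\mathbf{E}_{\alpha}\times \mathbf{E}_{\beta}$, I will check two things. First, for every finite set $U$, $\mathbf{E}_{\alpha,\beta}^{\mu}[U]\subseteq(\mathbf{E}_{\alpha}\times\mathbf{E}_{\beta})[U]$; this holds by definition. Second, the family $\{\mathbf{E}_{\alpha,\beta}^{\mu}[U]\}_U$ is stable under transport of structures: for every bijection $\sigma:U\to V$, the map $(\mathbf{E}_{\alpha}\times\mathbf{E}_{\beta})[\sigma]$ sends $\mathbf{E}_{\alpha,\beta}^{\mu}[U]$ into $\mathbf{E}_{\alpha,\beta}^{\mu}[V]$. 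Once this holds, $\mathbf{E}_{\alpha,\beta}^{\mu}$ inherits a functor structure from the restrictions of $(\mathbf{E}_{\alpha}\times\mathbf{E}_{\beta})[\sigma]$. Functoriality ($[\tau\sigma]=[\tau][\sigma]$, $[\Id]=\Id$) is then automatic from that of the ambient species, and the inclusions form a natural transformation.

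The key step is the behaviour of automorphism groups under transport. Recall that $\Aut(s)=\{g\in S_U \mid \mathbf{E}_{\alpha}[g]s=s\}$, and that the Cartesian product acts diagonally: $(\mathbf{E}_{\alpha}\times\mathbf{E}_{\beta})[\sigma](s,t)=(\mathbf{E}_{\alpha}[\sigma]s,\mathbf{E}_{\beta}[\sigma]t)$. Write $s'=\mathbf{E}_{\alpha}[\sigma]s$. For $g\in S_V$, functoriality gives
\[
\mathbf{E}_{\alpha}[g]s'=s' \iff \mathbf{E}_{\alpha}[\sigma^{-1}g\sigma]s=s .
\]
Hence $\Aut(s')=\sigma\Aut(s)\sigma^{-1}$, and likewise $\Aut(t')=\sigma\Aut(t)\sigma^{-1}$. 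Since conjugation by $\sigma$ is a bijection $S_U\to S_V$, it commutes with intersection, so
\[
\Aut(s')\cap\Aut(t')=\sigma\bigl(\Aut(s)\cap\Aut(t)\bigr)\sigma^{-1}.
\]

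The remaining point, which I expect to be the only delicate one, is what ``$\simeq S_{\mu}$'' means. It cannot mean abstract group isomorphism, since then $\mu$ would not be determined by the group. I will read it as: after identifying $U$ with $[n]$ by some bijection, the subgroup is conjugate in the symmetric group to the Young subgroup $S_{\mu}$. Equivalently, it is the setwise stabilizer group of a set partition of $U$ whose block sizes form $\mu$, matching the intersection lemma $S_{\alpha}\cap S_{\beta}=S_{\mu}$ above. This notion is visibly invariant under conjugation by a bijection $\sigma:U\to V$: $\sigma$ carries a set partition of $U$ of type $\mu$ to one of $V$ of the same type, and carries its Young subgroup to the corresponding one. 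Combining this with the displayed identity shows that $(s',t')\in\mathbf{E}_{\alpha,\beta}^{\mu}[V]$, which completes the argument.
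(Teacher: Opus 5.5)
Your proof is correct and follows the same route as the paper: the inclusion holds by definition, and stability under transport of structure is checked through the automorphism groups. It is in fact more careful than the paper's. Your identity $\Aut(s')\cap\Aut(t')=\sigma\bigl(\Aut(s)\cap\Aut(t)\bigr)\sigma^{-1}$, obtained by conjugating both groups by the \emph{same} $\sigma$, is exactly what justifies the step the paper calls ``clear''; the paper only notes $\Aut(\mathbf{E}_{\alpha}[f]s)\simeq\Aut(s)$ and $\Aut(\mathbf{E}_{\beta}[f]t)\simeq\Aut(t)$ separately.

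One aside is inaccurate but harmless. For fixed $n$, an abstract isomorphism $S_\mu\cong S_{\mu'}$ does force $\mu=\mu'$, because each $S_m$ with $m\ge 2$ is directly indecomposable and Krull--Schmidt applies. In any case both readings of $\simeq S_{\mu}$ are invariant under conjugation by $\sigma$, so your argument works either way.
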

\begin{proof}
For any finite set $U$, $\mathbf{E}_{\alpha,\beta}^{\mu}[U]\subset (\mathbf{E}_{\alpha}\times \mathbf{E}_{\beta})[U]$. We must prove that it is stable under the transport of structure. Let $f:U\to V$ be a bijection. For every $(s,t)\in \mathbf{E}_{\alpha,\beta}^{\mu}[U]$, we have
	$$
	\Aut(\mathbf{E}_{\alpha}[f]s) \simeq \Aut(s)\quad \mbox{ and }\quad  \Aut(\mathbf{E}_{\beta}[f]t)\simeq \Aut(t).
	$$
	It is clear that $(\mathbf{E}_{\alpha}[f]s,\mathbf{E}_{\beta}[f]t) \in (\mathbf{E}_{\alpha}\times \mathbf{E}_{\beta})[V]$ and that $$\Aut(\mathbf{E}_{\alpha}[f]s)\cap \Aut(\mathbf{E}_{\beta}[f]t) \simeq S_{\mu}.$$
	Therefore, $(\mathbf{E}_{\alpha}[f]s,\mathbf{E}_{\beta}[f]t)\in \mathbf{E}_{\alpha,\beta}^{\mu}[V]$. Thus, $\mathbf{E}_{\alpha,\beta}^{\mu}$ is a subspecies of $\mathbf{E}_{\alpha}\times \mathbf{E}_{\beta}$.
\end{proof}
Mackey's theorem is central to our proof.
\begin{lem}\label{thmackey}[Mackey's theorem for group action, see \cite{AK3}\cite{AK}]
	Let $G$ be a group acting transitively on sets $X$ and $Y$. Then, for every $x\in X$ and $y\in Y$, we have the bijection
	$$
	G\backslash\!\backslash (X\times Y) \to \Aut(x)\backslash G\slash \Aut(y) : G(x,gy)\mapsto \Aut(x) g \Aut(y).
	$$
\end{lem}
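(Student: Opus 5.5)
The plan is to construct the map explicitly, show it is well defined, and then show it is surjective and injective, using only transitivity of the two actions and the definition of stabilizers. Here $\Aut(x)$ means the stabilizer $G_x=\{g\in G \mid gx=x\}$. The domain $G\backslash\!\backslash (X\times Y)$ is the set of orbits of $G$ acting diagonally on $X\times Y$.

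\emph{Step 1: every orbit has a representative of the stated form.} Fix $x\in X$ and $y\in Y$. Take any orbit $G(x',y')$. Since $G$ is transitive on $X$, there is $h\in G$ with $hx'=x$. Then $G(x',y')=G(x,hy')$. Since $G$ is transitive on $Y$, we can write $hy'=gy$ for some $g\in G$. Hence every orbit equals $G(x,gy)$ for some $g$, so the proposed map is defined on all orbits once well-definedness is checked.

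\emph{Step 2: well-definedness, which is the main point.} Suppose $G(x,gy)=G(x,g'y)$. Then there is $k\in G$ with $kx=x$ and $kgy=g'y$. The first condition gives $k\in\Aut(x)$. The second gives $g'^{-1}kg\in\Aut(y)$. Set $a=g'^{-1}kg$; then $g'=kga^{-1}$, so $g'\in \Aut(x)\,g\,\Aut(y)$. Thus the double coset does not depend on the chosen representative.

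\emph{Step 3: injectivity and surjectivity.} Injectivity is Step 2 read backwards. If $g'=kga$ with $k\in\Aut(x)$ and $a\in\Aut(y)$, then
$$k(x,gy)=(x,kgy)=(x,kgay)=(x,g'y),$$
so the two orbits coincide. Surjectivity is immediate, since the double coset $\Aut(x)g\Aut(y)$ is the image of the orbit $G(x,gy)$.

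I expect no real obstacle. The only care needed is bookkeeping of left versus right actions, so that $\Aut(y)$ acts on the right of $g$ and $\Aut(x)$ on the left. I would also remark, for later use with Proposition \ref{propprod}, that the stabilizer of $(x,gy)$ is $\Aut(x)\cap g\Aut(y)g^{-1}$.
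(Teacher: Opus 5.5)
Your proof is correct and complete. Step 1 uses both transitivity hypotheses to bring every orbit to the form $G(x,gy)$, Step 2 gives well-definedness, the computation $k(x,gy)=(x,g'y)$ for $g'=kga$ gives injectivity, and surjectivity is immediate.

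The paper does not prove this lemma; it cites Kerber instead, and your argument is the standard proof behind that citation. Your closing remark, that the stabilizer of $(x,gy)$ is $\Aut(x)\cap g\Aut(y)g^{-1}$, is exactly the fact the paper relies on without stating when it matches double cosets with the terms $X^n/(H\cap\tau K\tau^{-1})$ of Proposition~\ref{propprod}.
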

\begin{lem}
	Let $U$ be a finite set. For each coset $\lambda S_{\alpha}\in \left(X^n/S_{\alpha}\right)[U]$, the automorphism group $\Aut(\lambda S_{\alpha}) \simeq S_{\alpha}$.
\end{lem}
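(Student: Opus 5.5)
The plan is to compute the stabilizer of the coset $\lambda S_{\alpha}$ directly from the definition of transport of structure in $X^n/S_{\alpha}$, and then recognize it as a conjugate of $S_{\alpha}$. Here $\lambda:[n]\to U$ is a bijection, and a permutation $\sigma\in S_U$ acts by $(X^n/S_{\alpha})[\sigma]\,\lambda S_{\alpha}=(\sigma\lambda)S_{\alpha}$. Hence
$$
\Aut(\lambda S_{\alpha})=\{\sigma\in S_U \mid \sigma\lambda S_{\alpha}=\lambda S_{\alpha}\}.
$$

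First, I will unwind the condition $\sigma\lambda S_{\alpha}=\lambda S_{\alpha}$. It holds if and only if $\lambda^{-1}\sigma\lambda\in S_{\alpha}$. The only care needed here is that $\lambda^{-1}\sigma\lambda$ is a permutation of $[n]$, so membership in $S_{\alpha}\le S_n$ makes sense. Therefore
$$
\Aut(\lambda S_{\alpha})=\lambda S_{\alpha}\lambda^{-1}\le S_U.
$$

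Second, the map $c_\lambda: S_n\to S_U$, $f\mapsto \lambda f\lambda^{-1}$, is a group isomorphism. Restricting it to $S_{\alpha}$ gives $\Aut(\lambda S_{\alpha})\simeq S_{\alpha}$.

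Concretely, if $U=[n]$, this stabilizer is the Young subgroup fixing the blocks $\lambda(\underline{n}^{\alpha}_i)$. It is therefore conjugate in $S_n$ to $S_{\alpha}$, which is the sense in which the symbol $\simeq$ is used elsewhere in this section (for instance in the definition of $\mathbf{E}^{\mu}_{\alpha,\beta}$). I would state this conjugacy explicitly, since the later application of Lemma \ref{thmackey} needs the stabilizers as subgroups, not just as abstract groups.

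There is no real obstacle. The only point requiring care is the bookkeeping of left versus right composition, so that the stabilizer comes out as $\lambda S_{\alpha}\lambda^{-1}$ and not $\lambda^{-1}S_{\alpha}\lambda$. I would also remark that nothing about $S_{\alpha}$ is used, so the same argument gives $\Aut(\lambda H)=\lambda H\lambda^{-1}$ for any $H\le S_n$. This covers the analogous statements for $\mathbf{C}_{\alpha}$ (with $H=\langle\sigma_\alpha\rangle$) and $\mathbf{K}_{\alpha}$ (with $H=G_{\alpha}$).
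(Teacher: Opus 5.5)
Your proposal is correct and takes the same approach as the paper, whose proof is the single line $\Aut(\lambda S_{\alpha}) = \lambda S_{\alpha}\lambda^{-1}\simeq S_{\alpha}$. You supply the two justifications the paper leaves implicit: the equivalence $\sigma\lambda S_{\alpha}=\lambda S_{\alpha} \Leftrightarrow \lambda^{-1}\sigma\lambda\in S_{\alpha}$, and the fact that conjugation by $\lambda$ is a group isomorphism onto its image.
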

\begin{proof}
We have $\Aut(\lambda S_{\alpha}) = \lambda S_{\alpha}\lambda^{-1}\simeq S_{\alpha}$.
\end{proof}
Applying Mackey's theorem with $X = \mathbf{E}_{\alpha}[\underline{n}]$ and $Y=\mathbf{E}_{\beta}[\underline{n}]$, and $G=S_{\underline{n}}$, we have for every $x\in X$ and $y\in Y$, $\Aut(x) \simeq S_{\alpha}$ and $\Aut(y) \simeq S_{\beta}$. 
\begin{lem}
	We have the bijection:
	$$
	S_n\backslash\!\backslash (\mathbf{E}_{\alpha}[\underline{n}]\times \mathbf{E}_{\beta}[\underline{n}]) \to S_{\alpha} \backslash S_n \slash S_{\beta}.
	$$
\end{lem}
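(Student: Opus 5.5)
The statement is a direct specialization of Mackey's theorem for group actions (Lemma \ref{thmackey}). I would take $G=S_n$, $X=\mathbf{E}_{\alpha}[\underline{n}]=(X^n/S_\alpha)[\underline{n}]$ and $Y=\mathbf{E}_{\beta}[\underline{n}]$. Here $S_n$ acts on each set by transport of structure, $g\cdot(\lambda S_\alpha)=(g\lambda)S_\alpha$. The plan is to check the hypotheses of Lemma \ref{thmackey}, identify the stabilizers with $S_\alpha$ and $S_\beta$, and read off the bijection.

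The first step is transitivity. Any two cosets $\lambda_1S_\alpha,\lambda_2S_\alpha$ with $\lambda_i:[n]\to[n]$ bijections satisfy $\lambda_1S_\alpha=(\lambda_1\lambda_2^{-1})\lambda_2S_\alpha$, as in the Remark following the definition of molecular species. The same holds for $Y$.

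The second step is choosing base points. I would take $x=\Id\,S_\alpha$ and $y=\Id\,S_\beta$. Then $\Aut(x)=\{g\in S_n: gS_\alpha=S_\alpha\}=S_\alpha$ exactly, not merely up to isomorphism, and likewise $\Aut(y)=S_\beta$. This agrees with the preceding lemma, which gives $\Aut(\lambda S_\alpha)=\lambda S_\alpha\lambda^{-1}$, here with $\lambda=\Id$.

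Lemma \ref{thmackey} then gives the bijection
$$S_n\backslash\!\backslash(X\times Y)\to S_\alpha\backslash S_n/S_\beta,\qquad S_n(x,gy)\mapsto S_\alpha gS_\beta.$$
Every orbit has a representative of the form $(x,gy)$, because transitivity on $X$ lets us move the first coordinate to $x$.

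The only point needing care is that this map is well defined and bijective. I would verify it directly rather than rely only on the citation. Suppose $(x,gy)$ and $(x,g'y)$ lie in the same orbit. Then there is $h$ with $hx=x$, so $h\in S_\alpha$, and $hgy=g'y$, so $g'^{-1}hg\in S_\beta$. Hence $g'\in S_\alpha gS_\beta$, and the argument reverses, giving injectivity as well. Surjectivity is immediate, since $S_\alpha gS_\beta$ is the image of the orbit of $(x,gy)$. No genuine obstacle is expected.
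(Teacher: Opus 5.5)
Your proposal is correct and follows the paper's route: the paper gets this lemma by applying Mackey's theorem (Lemma~\ref{thmackey}) with $G=S_n$, $X=\mathbf{E}_{\alpha}[\underline{n}]$, $Y=\mathbf{E}_{\beta}[\underline{n}]$ and stabilizers $S_\alpha$, $S_\beta$. Your choice of base points $x=\Id\,S_\alpha$ and $y=\Id\,S_\beta$ makes the stabilizers literally equal to $S_\alpha$ and $S_\beta$ (the paper only says ``$\simeq$''), and your direct check of well-definedness and bijectivity spells out what the paper leaves to the citation.
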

\begin{cor}
	The isomorphism types (or unlabeled structures) associated with $\mathbf{E}_{\alpha,\beta}^{\mu}$ are in bijection with $\{S_{\alpha}\tau S_{\beta} \in S_{\alpha}\backslash S_{\underline{n}}\slash S_{\beta}\; |\; S_{\alpha}\cap \tau S_{\beta} \tau^{-1} = S_{\mu}\}$.
\end{cor}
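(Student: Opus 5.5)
The plan is to start from the bijection of the preceding lemma: Mackey's theorem (Lemma \ref{thmackey}) applied with $G=S_n$, $X=\mathbf{E}_{\alpha}[\underline{n}]$ and $Y=\mathbf{E}_{\beta}[\underline{n}]$. It identifies the orbits $S_n\backslash\!\backslash(\mathbf{E}_{\alpha}[\underline{n}]\times\mathbf{E}_{\beta}[\underline{n}])$, i.e.\ the isomorphism types of $\mathbf{E}_{\alpha}\times\mathbf{E}_{\beta}$, with the double cosets $S_{\alpha}\backslash S_n\slash S_{\beta}$. Fix the base points $x=\mathrm{id}\,S_{\alpha}$ and $y=\mathrm{id}\,S_{\beta}$, so that $\Aut(x)=S_{\alpha}$ and $\Aut(y)=S_{\beta}$. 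The orbit of a pair $(x,\tau y)$ is sent to $S_{\alpha}\tau S_{\beta}$.

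Since $\mathbf{E}_{\alpha,\beta}^{\mu}$ is a subspecies of $\mathbf{E}_{\alpha}\times\mathbf{E}_{\beta}$ (previous proposition), its isomorphism types form a subset of these orbits. It therefore suffices to decide, for each orbit, whether its representative $(x,\tau y)$ lies in $\mathbf{E}_{\alpha,\beta}^{\mu}[\underline{n}]$. The condition defining $\mathbf{E}_{\alpha,\beta}^{\mu}$ involves $\Aut(s)\cap\Aut(t)$ only up to isomorphism, and this is constant along an orbit: for $g\in S_n$ one has $\Aut(gs)\cap\Aut(gt)=g(\Aut(s)\cap\Aut(t))g^{-1}$. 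So membership is a property of the orbit, and the restriction of the Mackey bijection to $\mathbf{E}_{\alpha,\beta}^{\mu}$ is well defined.

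Next compute the stabilizer of the representative. By the lemma on automorphism groups of cosets, $\Aut(\tau y)=\tau S_{\beta}\tau^{-1}$, hence $\Aut(x)\cap\Aut(\tau y)=S_{\alpha}\cap\tau S_{\beta}\tau^{-1}$. This must also be independent of the chosen representative of the double coset. Replacing $\tau$ by $a\tau b$ with $a\in S_{\alpha}$ and $b\in S_{\beta}$ conjugates the intersection by $a$, because $a\tau b S_{\beta}b^{-1}\tau^{-1}a^{-1}=a\tau S_{\beta}\tau^{-1}a^{-1}$ and $aS_{\alpha}a^{-1}=S_{\alpha}$.

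Then apply the lemma $S_{\alpha}\cap S_{\beta}=S_{\mu}$ after the relabelling $\tau$. The group $\tau S_{\beta}\tau^{-1}$ is the Young subgroup of the set partition $\tau(\underline{n}^{\beta})$. The intersection is therefore the stabilizer of the blocks $\underline{n}^{\alpha}_i\cap\tau(\underline{n}^{\beta}_j)$, which is a Young subgroup of some shape $\mu$. Consequently the pair $(x,\tau y)$ lies in $\mathbf{E}_{\alpha,\beta}^{\mu}$ exactly when this block-size partition equals $\mu$, i.e.\ when $S_{\alpha}\cap\tau S_{\beta}\tau^{-1}=S_{\mu}$ up to conjugacy. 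Restricting the Mackey bijection to these double cosets gives the claim.

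The main obstacle is the meaning of "$\simeq S_{\mu}$". If it means abstract group isomorphism, it does not determine $\mu$: for example $S_2\times S_1\times S_1\cong S_2\times S_1$, so the product of symmetric groups alone forgets the parts equal to $1$. I would therefore read it as conjugacy in $S_n$, equivalently as having the same orbit partition on $\underline{n}$. Under that reading, Lemma \ref{espconj} together with part (1) of Lemma \ref{specyc} shows that the shape $\mu$ is uniquely determined. This is the point I would make explicit in the write-up.
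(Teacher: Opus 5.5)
Your proof is correct and takes the same route as the paper. The paper obtains the corollary by restricting the Mackey bijection $S_n\backslash\!\backslash(\mathbf{E}_{\alpha}[\underline{n}]\times\mathbf{E}_{\beta}[\underline{n}])\to S_{\alpha}\backslash S_n\slash S_{\beta}$, $S_n(x,\tau y)\mapsto S_{\alpha}\tau S_{\beta}$, to the $S_n$-stable subset $\mathbf{E}_{\alpha,\beta}^{\mu}[\underline{n}]$, using $\Aut(x)\cap\Aut(\tau y)=S_{\alpha}\cap\tau S_{\beta}\tau^{-1}$. You usefully make explicit two points the paper leaves implicit: membership is constant on orbits, and the condition on a double coset only makes sense up to conjugacy. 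The paper's literal equality with $S_{\mu}$ can fail for every representative, e.g.\ $\alpha=(2,2)$, $\beta=(3,1)$, $\mu=(2,1,1)$.

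One correction to your last paragraph. Since $\mu$ is a partition of the fixed integer $n$, an abstract isomorphism $S_{\mu}\cong S_{\nu}$ with $\mu,\nu\vdash n$ already forces $\mu=\nu$. Krull--Schmidt recovers the indecomposable factors $S_m$ with $m\geq 2$, and $n$ then fixes the number of parts equal to $1$. Your example $S_2\times S_1\times S_1\cong S_2\times S_1$ compares partitions of different integers, so it is no obstruction, and both readings define the same subspecies.
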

We denote by $M_{\alpha,\beta}$ the set of matrices with coefficients in $\mathbb{N}$ whose row sums are $\alpha_1,\alpha_2,\ldots$ and column sums are $\beta_1,\beta_2,\ldots$. Let $\mu$ be a partition of $n$; we denote by $M_{\alpha,\beta}^{\mu}$ the set of matrices $A\in M_{\alpha,\beta}$ whose entries, when sorted in decreasing order, form the partition $\mu$.
\begin{lem}\label{bij}[Corollary 4.3.8 \cite{AK3}]
	We have the bijection
	$$
	S_{\lambda}\backslash S_{\underline{n}} \slash S_{\mu} \xrightarrow{\theta} M_{\alpha,\beta}: S_{\alpha}\tau S_{\beta} \mapsto (z_{ij}),
	$$
	where $z_{ij}:=|\underline{n}_i^{\alpha} \cap\tau \underline{n}_j^{\beta}|$.
\end{lem}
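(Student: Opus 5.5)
We prove Lemma~\ref{bij} by checking that $\theta$ is well defined and injective, and then constructing an inverse on $M_{\alpha,\beta}$. Here $\lambda,\mu$ in the statement should be read as $\alpha,\beta$. Write $\underline{n}^{\alpha}=(\underline{n}^{\alpha}_1,\ldots,\underline{n}^{\alpha}_k)$ for the ordered set partition of $[n]$ into consecutive blocks with $|\underline{n}^{\alpha}_i|=\alpha_i$. Then $S_{\alpha}$ is exactly the stabilizer of each block, and similarly for $\underline{n}^{\beta}$.

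\textbf{Well-definedness.} Replace $\tau$ by $g\tau h$ with $g\in S_{\alpha}$ and $h\in S_{\beta}$. Since $h$ fixes every $\underline{n}^{\beta}_j$ setwise, $g\tau h\,\underline{n}^{\beta}_j=g\tau\underline{n}^{\beta}_j$. Since $g$ is a bijection fixing $\underline{n}^{\alpha}_i$ setwise, $|\underline{n}^{\alpha}_i\cap g\tau\underline{n}^{\beta}_j|=|g(\underline{n}^{\alpha}_i\cap\tau\underline{n}^{\beta}_j)|=|\underline{n}^{\alpha}_i\cap\tau\underline{n}^{\beta}_j|$. For each fixed $i$, the sets $\tau\underline{n}^{\beta}_j$ partition $[n]$, so $\sum_j z_{ij}=\alpha_i$. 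Symmetrically, $\sum_i z_{ij}=|\tau\underline{n}^{\beta}_j|=\beta_j$. Hence $\theta(S_\alpha\tau S_\beta)\in M_{\alpha,\beta}$.

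\textbf{Surjectivity.} Given $Z=(z_{ij})\in M_{\alpha,\beta}$, split each block $\underline{n}^{\alpha}_i$ into pieces $A_{ij}$ with $|A_{ij}|=z_{ij}$; the row-sum condition makes this possible. Split each block $\underline{n}^{\beta}_j$ into pieces $B_{ij}$ with $|B_{ij}|=z_{ij}$; the column-sum condition makes this possible. Choose any bijections $B_{ij}\to A_{ij}$ and glue them into a permutation $\tau$. Then $\tau\underline{n}^{\beta}_j=\bigsqcup_i A_{ij}$, so $\underline{n}^{\alpha}_i\cap\tau\underline{n}^{\beta}_j=A_{ij}$ and $\theta(S_\alpha\tau S_\beta)=Z$.

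\textbf{Injectivity.} This is the main step. Suppose $\tau,\tau'$ give the same matrix. Put $A_{ij}=\underline{n}^{\alpha}_i\cap\tau\underline{n}^{\beta}_j$ and $A'_{ij}=\underline{n}^{\alpha}_i\cap\tau'\underline{n}^{\beta}_j$, so $|A_{ij}|=|A'_{ij}|$ for all $i,j$.

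\emph{Step 1.} Choose bijections $A'_{ij}\to A_{ij}$ and assemble them into a permutation $g$. For each $i$, the sets $A_{ij}$ and $A'_{ij}$ (over $j$) both partition $\underline{n}^{\alpha}_i$, so $g$ preserves every $\underline{n}^{\alpha}_i$ and $g\in S_{\alpha}$. It also sends $\tau'\underline{n}^{\beta}_j=\bigsqcup_i A'_{ij}$ onto $\tau\underline{n}^{\beta}_j$.

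\emph{Step 2.} Set $h=\tau^{-1}g\tau'$. By Step 1, $h\,\underline{n}^{\beta}_j=\tau^{-1}\tau\underline{n}^{\beta}_j=\underline{n}^{\beta}_j$ for every $j$, so $h\in S_{\beta}$.

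\emph{Step 3.} From $h=\tau^{-1}g\tau'$ we get $\tau'=g^{-1}\tau h$, so the two double cosets coincide.

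The only delicate point is building $g$ piece by piece so that it lies in $S_{\alpha}$. This works because the $A_{ij}$ refine the $\alpha$-blocks. The same argument is Corollary~4.3.8 of \cite{AK3}, which can be cited directly.
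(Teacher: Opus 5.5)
Your proof is correct and complete. Well-definedness, surjectivity (gluing bijections $B_{ij}\to A_{ij}$) and injectivity (building $g\in S_{\alpha}$ blockwise from $A'_{ij}\to A_{ij}$, then checking $h=\tau^{-1}g\tau'\in S_{\beta}$) all hold, and you are right that $\lambda,\mu$ in the statement should read $\alpha,\beta$.

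The paper gives no argument of its own for this lemma; it only cites Corollary 4.3.8 of \cite{AK3}. Your proof is the standard argument behind that reference: double cosets $S_{\alpha}\tau S_{\beta}$ correspond to $S_{\alpha}$-orbits of the ordered set partitions $(\tau\underline{n}^{\beta}_j)_j$, and these orbits are classified by their intersection sizes with the blocks $\underline{n}^{\alpha}_i$.
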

We define the type of the double coset $S_{\alpha}\tau S_{\beta}$ by $w(S_{\alpha}\tau S_{\beta})=\mu $ if $ S_{\alpha}\cap \tau S_{\beta}\tau^{-1}=S_{\mu}$ and the type of the matrix $Z=(z_{ij})\in M_{\alpha,\beta}$, $t(Z)=\mu$ if the decreasing sequence of the entries $(z_{ij})_{i,j}$ gives the partition $\mu$.
\begin{lem}
	The transformation $\theta$ preserves the types $t$ and $w$, i.e.,
	$$
	t(\theta(S_{\alpha}\tau S_{\beta})) = w(S_{\alpha}\tau S_{\beta}).
	$$
\end{lem}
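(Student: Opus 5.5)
We will prove that the subgroup $S_{\alpha}\cap \tau S_{\beta}\tau^{-1}$ is exactly the Young-type subgroup associated with the set partition of $[n]$ formed by the nonempty blocks $\underline{n}_i^{\alpha}\cap \tau\underline{n}_j^{\beta}$. The sizes of these blocks are precisely the entries $z_{ij}$ of $\theta(S_{\alpha}\tau S_{\beta})$. Sorting the nonzero $z_{ij}$ in decreasing order then gives simultaneously $w$ and $t$.

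\textbf{Step 1.} We first identify $\tau S_{\beta}\tau^{-1}$. By definition, $S_{\beta}$ is the stabilizer, blockwise, of the set partition $\{\underline{n}_j^{\beta}\}_j$, i.e. the permutations $g$ with $g(\underline{n}_j^{\beta})=\underline{n}_j^{\beta}$ for all $j$. A direct computation then shows that $\tau S_{\beta}\tau^{-1}$ is the blockwise stabilizer of $\{\tau\underline{n}_j^{\beta}\}_j$. Indeed, $g$ fixes every $\tau \underline{n}_j^{\beta}$ if and only if $\tau^{-1}g\tau$ fixes every $\underline{n}_j^{\beta}$.

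\textbf{Step 2.} Next we show
\[
S_{\alpha}\cap \tau S_{\beta}\tau^{-1}=\prod_{i,j} S_{\underline{n}_i^{\alpha}\cap\tau\underline{n}_j^{\beta}}.
\]
For the inclusion $\subseteq$, we argue as in the proof of the preceding lemma on $S_\alpha\cap S_\beta$, now with $\tau\underline{n}^{\beta}_j$ in place of $\underline{n}^{\beta}_j$. Any $\sigma$ in the intersection stabilizes each $\underline{n}_i^{\alpha}$ and each $\tau\underline{n}_j^{\beta}$, hence each intersection. The intersections partition $[n]$, so $\sigma$ lies in the direct product of the symmetric groups on those blocks. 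For the inclusion $\supseteq$, a permutation preserving every intersection block preserves every union of such blocks. In particular it preserves $\underline{n}_i^{\alpha}=\bigsqcup_j(\underline{n}_i^{\alpha}\cap\tau\underline{n}_j^{\beta})$ and similarly each $\tau\underline{n}_j^{\beta}$.

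\textbf{Step 3.} Finally we pass to types. The group $\prod_{i,j}S_{\underline{n}_i^{\alpha}\cap\tau\underline{n}_j^{\beta}}$ is a Young subgroup for a set partition with block sizes $z_{ij}$, so it is conjugate in $S_n$ to $S_{\mu}$, where $\mu$ is the decreasing rearrangement of the nonzero $z_{ij}$. We must also check that $w$ is well defined. Three points are needed.

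First, $w$ does not depend on the representative $\tau$ of the double coset. Replacing $\tau$ by $a\tau b$ with $a\in S_\alpha$ and $b\in S_\beta$ conjugates the intersection by $a$. Likewise, $\theta$ is well defined on double cosets, which Lemma \ref{bij} already provides.

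Second, "$=S_\mu$" in the definition of $w$ must be read as "conjugate to $S_\mu$". This reading is consistent with the proposition on $e^{\mu}_{\alpha,\beta}$ and with Lemma \ref{espconj}.

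Third, Young subgroups $S_\mu$ and $S_\nu$ are conjugate only if $\mu=\nu$. This follows because their orbit sizes on $[n]$ are the multisets of parts $\mu_i$ and $\nu_i$, and conjugate subgroups have the same multiset of orbit sizes.

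Hence $w(S_\alpha\tau S_\beta)=\mu=t(\theta(S_\alpha\tau S_\beta))$.

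The main obstacle is not computational but one of precision. The definition of $w$ uses equality with $S_\mu$, whereas the intersection is generally only conjugate to a standard Young subgroup. We will settle this in Step 3 by working with conjugacy classes and using the orbit-size invariant to recover $\mu$ uniquely.
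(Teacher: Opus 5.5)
Your proof is correct and follows the same route as the paper's: the intersection $S_\alpha\cap\tau S_\beta\tau^{-1}$ is the Young subgroup of the refined set partition $\{\underline{n}_i^{\alpha}\cap\tau\underline{n}_j^{\beta}\}$, whose block sizes are the entries $z_{ij}$. The paper simply asserts that these block sizes give $\mu$, whereas you justify each step: you prove both inclusions, check that $w$ is well defined on double cosets, and use orbit sizes to show that the conjugacy class of a Young subgroup determines its partition.
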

\begin{proof}
	Suppose that $w(S_{\alpha}\tau S_{\beta})=\mu$, that is $S_{\alpha}\cap \tau S_{\beta}\tau^{-1} = S_{\mu}$. This implies that the multiset of sizes of the intersecting blocks $|\underline{n}_i^{\alpha}\cap \tau \underline{n}_j^{\beta}|$ is precisely the multiset of parts of $\mu$. Thus, the decreasing sequence of $|\underline{n}_i^{\alpha} \cap\tau \underline{n}_j^{\beta}|$ is equal to $\mu$. So $t(\theta(S_{\alpha}\tau S_{\beta}))=\mu = w(S_{\alpha}\tau S_{\beta})$.
\end{proof}
\begin{prop}
	The set $\{S_{\alpha}\tau S_{\beta} \in S_{\alpha}\backslash S_{\underline{n}}\slash S_{\beta}\; |\; S_{\alpha}\cap \tau S_{\beta}\tau^{-1} = S_{\mu}\}$ is in bijection with the set of matrices $M_{\alpha,\beta}^{\mu}$.
\end{prop}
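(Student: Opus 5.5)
The plan is to restrict the bijection $\theta$ of Lemma \ref{bij} to the subset of double cosets of type $\mu$, and to check that the image of this subset is exactly $M_{\alpha,\beta}^{\mu}$. Since $\theta : S_{\alpha}\backslash S_{\underline{n}}\slash S_{\beta}\to M_{\alpha,\beta}$ is already a bijection, any restriction of it is injective. It therefore suffices to show that a double coset $S_{\alpha}\tau S_{\beta}$ satisfies $S_{\alpha}\cap\tau S_{\beta}\tau^{-1}=S_{\mu}$ (up to conjugacy, as in the definition of $w$) if and only if $\theta(S_{\alpha}\tau S_{\beta})\in M_{\alpha,\beta}^{\mu}$. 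In other words, we need
$$
w(S_{\alpha}\tau S_{\beta})=\mu \iff t(\theta(S_{\alpha}\tau S_{\beta}))=\mu .
$$

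The forward implication is the preceding lemma, which says that $\theta$ preserves types.

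For the reverse implication, I would first identify the intersection explicitly. Note that $\tau S_{\beta}\tau^{-1}$ is the stabilizer of the set partition $\{\tau\underline{n}_j^{\beta}\}_j$. Hence $S_{\alpha}\cap\tau S_{\beta}\tau^{-1}$ is the group of permutations fixing every block $\underline{n}_i^{\alpha}$ and every block $\tau\underline{n}_j^{\beta}$. By the argument of the earlier lemma on $S_{\alpha}\cap S_{\beta}$, this is exactly the Young subgroup $\prod_{i,j}S_{\underline{n}_i^{\alpha}\cap\tau\underline{n}_j^{\beta}}$. The earlier lemma only shows containment in this product, so I also need the reverse inclusion. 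It is immediate: a permutation preserving each intersection block preserves each union of such blocks, and in particular each $\underline{n}_i^{\alpha}$ and each $\tau\underline{n}_j^{\beta}$.

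Consequently the intersection is conjugate to $S_{\nu}$, where $\nu$ is the decreasing rearrangement of the nonzero entries $z_{ij}$, that is, $\nu=t(\theta(S_{\alpha}\tau S_{\beta}))$. So the type of a double coset is always well defined and equals the type of its matrix. If $t(\theta(S_{\alpha}\tau S_{\beta}))=\mu$, then $w(S_{\alpha}\tau S_{\beta})=\mu$. Together with the well-definedness of $w$ (Young subgroups $S_{\nu}$, $S_{\nu'}$ are conjugate only when $\nu=\nu'$, as used in Lemma \ref{specyc}(1)), this gives the equivalence.

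The main point to handle carefully is this last identification. There are two things to check. First, zero entries $z_{ij}=0$ must be discarded, which matches the convention that partitions have positive parts. Second, the condition ``$=S_{\mu}$'' in the statement must be read up to conjugacy, since the intersection is a Young subgroup on non-standard blocks. Once these are settled, the restriction of $\theta$ is a bijection onto $M_{\alpha,\beta}^{\mu}$: it is injective as a restriction of a bijection, and it is surjective because any $Z\in M_{\alpha,\beta}^{\mu}$ equals $\theta(S_{\alpha}\tau S_{\beta})$ for some $\tau$, and that double coset then has type $\mu$ by the reverse implication.
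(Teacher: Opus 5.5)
Your proposal is correct and follows the paper's route: restrict the bijection $\theta$ of Lemma \ref{bij} to the double cosets of type $\mu$, using that $\theta$ sends the type $w$ of a double coset to the type $t$ of its matrix. You are more careful than the paper on three points it leaves implicit: you prove the reverse inclusion $\prod_{i,j}S_{\underline{n}_i^{\alpha}\cap\tau\underline{n}_j^{\beta}}\subseteq S_{\alpha}\cap\tau S_{\beta}\tau^{-1}$, you read ``$=S_{\mu}$'' up to conjugacy, and you discard the zero entries $z_{ij}$.
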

\begin{proof}
	The map $\theta$ defined in Lemma \ref{bij} induces a bijection from $\{S_{\alpha}\tau S_{\beta} \in S_{\alpha}\backslash S_{\underline{n}}\slash S_{\beta}\; |\; S_{\alpha}\cap \tau S_{\beta}\tau^{-1} = S_{\mu}\}$ to $NM_{\alpha,\beta}^{\mu}$.
\end{proof}
\begin{ex}
Here are examples of the expansion:
$$
h_{42}\star h_{33} = h_{2211} + 2 h_{321},
$$
$$
h_{44}\star h_{332} = 2 h_{221111} + 2 h_{22211} + 4 h_{3221} + 2 h_{3311}.
$$
\end{ex}
\begin{cor}[A.M. Garsia and J. Remmel \cite{GR}\cite{GR}]
The coefficient of $h_{\mu}$ in the product $h_{\alpha}\star h_{\beta}$ is equal to the cardinality of $NM_{\alpha,\beta}^{\mu}$.
\end{cor}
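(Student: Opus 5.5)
The plan is to pass from the species identity to cycle index series and then read off coefficients using the double-coset/matrix bijection established above.

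\textbf{Step 1 (species decomposition).} By the Proposition decomposing $\mathbf{E}_{\alpha}\times\mathbf{E}_{\beta}$ (itself a consequence of Proposition~\ref{propprod}(2)), we have
$$
\mathbf{E}_{\alpha}\times\mathbf{E}_{\beta}=\sum_{\mu\vdash n}e^{\mu}_{\alpha,\beta}\,\mathbf{E}_{\mu},
$$
where $e^{\mu}_{\alpha,\beta}$ counts the double cosets $S_\alpha\tau S_\beta$ for which $S_\alpha\cap\tau S_\beta\tau^{-1}$ is conjugate to $S_\mu$. Here I would make explicit the point behind the Lemma on intersections of Young subgroups. Note that $\tau S_\beta\tau^{-1}$ is the stabilizer of the set partition $\tau\underline{n}^\beta$. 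Hence $S_\alpha\cap\tau S_\beta\tau^{-1}$ is exactly the Young subgroup stabilizing the common refinement $\{\underline{n}^\alpha_i\cap\tau\underline{n}^\beta_j\}$, whose block sizes, once sorted, form the partition $\mu$.

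\textbf{Step 2 (cycle index series).} Apply $Z$ to both sides. Proposition~\ref{symesp}(2) converts the Cartesian product into the Kronecker product, and additivity of $Z$ together with $Z_{\mathbf{E}_\mu}=h_\mu$ gives
$$
h_\alpha\star h_\beta=\sum_{\mu}e^{\mu}_{\alpha,\beta}h_\mu.
$$
The transitivity hypothesis stated in Proposition~\ref{symesp} is not actually needed, since $Z_{F\times G}=Z_F\star Z_G$ holds for all species. Since $\{h_\mu\}_{\mu\vdash n}$ is a basis (equivalently, by Lemma~\ref{specyc}(1)), the coefficient of $h_\mu$ is uniquely $e^{\mu}_{\alpha,\beta}$.

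\textbf{Step 3 (counting).} It remains to show $e^\mu_{\alpha,\beta}=|NM^\mu_{\alpha,\beta}|$. By Lemma~\ref{bij}, $\theta$ is a bijection from double cosets to $M_{\alpha,\beta}$ sending $S_\alpha\tau S_\beta$ to $(|\underline{n}^\alpha_i\cap\tau\underline{n}^\beta_j|)$. By the type-preservation lemma ($t\circ\theta=w$), $\theta$ restricts to a bijection between the double cosets of type $\mu$ and $M^\mu_{\alpha,\beta}=NM^\mu_{\alpha,\beta}$, which gives the claim.

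\textbf{Main obstacle.} The delicate point is well-definedness in Step 3. The type $w$ must not depend on the representative $\tau$; this holds because replacing $\tau$ by $a\tau b$ with $a\in S_\alpha$, $b\in S_\beta$ conjugates the intersection by $a$. Likewise, the matrix must be independent of the representative, which follows since $a$ permutes elements within the blocks $\underline{n}^\alpha_i$ and $b$ fixes each block $\underline{n}^\beta_j$. I would verify these two facts directly before invoking the bijection, and would also note that zero entries of the matrix are discarded when forming $\mu$.
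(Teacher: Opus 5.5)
Your proposal is correct and follows essentially the same route as the paper: the double-coset decomposition of $\mathbf{E}_{\alpha}\times\mathbf{E}_{\beta}$ from Proposition~\ref{propprod}(2), then passage to cycle indices, then the type-preserving bijection $\theta$ of Lemma~\ref{bij} onto $M^{\mu}_{\alpha,\beta}$. Your extra checks fill in points the paper leaves implicit: both inclusions for $S_\alpha\cap\tau S_\beta\tau^{-1}$, independence of the representative $\tau$, reading the type up to conjugacy, and the fact that $Z_{F\times G}=Z_F\star Z_G$ needs no transitivity hypothesis.
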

In the following, similar properties are developed for the family $\{\mathbf{C}_{\alpha}(\mathbf{z})\}_{\alpha\vdash n}$. For two partitions ${\alpha}$ and ${\beta}$, the decomposition of the Hadamard product is given by:
$$
\mathbf{C}_{\alpha}(\mathbf{z})\star \mathbf{C}_{\beta}(\mathbf{z}) \,=\, \sum_{\mu\vdash n} b_{\alpha,\beta}^{\mu} \mathbf{C}_{\mu}(\mathbf{z}) \mbox{ where $b_{\alpha,\beta}^{\mu}\in\mathbb{N}$}.
$$
\begin{defin}
	We define the category $\mathrm{C}\mathrm{Esp}$ whose objects are non-negative integer linear combinations of the cyclic species $\mathbf{C}_{\mu}$, i.e.,
	$$
	F = \sum_{\mu\vdash n} a_{\mu} \mathbf{C}_{\mu} \quad \mbox{ where $a_{\mu}\in\mathbb{N}$},
	$$
	and whose morphisms are natural transformations (species morphisms).
\end{defin}
\begin{ex}
Let $F$ be an object of $\mathrm{C}\mathrm{Esp}$ defined by
$$
F = \mathbf{C}_{5} + 2\mathbf{C}_{32} + 2\mathbf{C}_{221}.
$$
This object can be interpreted in the same way as in Example \ref{exdeco}, where the coefficients represent the multiplicity or number of distinct decorations available for the corresponding $\mathbf{C}_{\mu}$-structure.
\end{ex}
As in the case of set molecules, we are interested in the following family of cyclic subgroups of the symmetric group $S_n$:
$$
\{\langle\sigma_\alpha\rangle\}_{\alpha\vdash n}.
$$
Here, $\langle\sigma_\alpha\rangle$ is the cyclic subgroup generated by the standard permutation $\sigma_\alpha$ of cycle type $\alpha$.
\begin{lem}
	Let $\sigma_\alpha$ and $\sigma_\beta$ be two permutations. The intersection of the corresponding cyclic groups is generated by a single permutation, i.e.,
	$$
	\langle \sigma_\alpha \rangle  \cap \langle \sigma_\beta \rangle = \langle \sigma_\mu \rangle,
	$$
	for some permutation $\sigma_\mu$.
\end{lem}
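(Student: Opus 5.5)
The plan is to use only the elementary fact that every subgroup of a cyclic group is cyclic, and then to identify the cycle type of the generator so that the statement can be used with the species $\mathbf{C}_{\mu}$. Write $G=\langle \sigma_\alpha\rangle$ and $H=\langle\sigma_\beta\rangle$. Since $G\cap H$ is a subgroup of $G$, it suffices to show that subgroups of a cyclic group are cyclic. I will also produce an explicit generator.

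\textbf{Step 1 (explicit generator).} Let $o=o(\sigma_\alpha)$. The set $D=\{k\ge 1 \mid \sigma_\alpha^k\in H\}$ is nonempty, because $o\in D$. Let $d$ be its minimum. For any $g=\sigma_\alpha^k\in G\cap H$, Euclidean division gives $k=qd+r$ with $0\le r<d$. Then $\sigma_\alpha^r=\sigma_\alpha^k(\sigma_\alpha^{d})^{-q}$ lies in $H$, so minimality of $d$ forces $r=0$. Hence $G\cap H=\langle\sigma_\alpha^{d}\rangle$. Applying the same argument to $k=o$ shows $d\mid o$, so $|G\cap H|=o/d$.

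\textbf{Step 2 (cycle type).} Set $\sigma_\mu:=\sigma_\alpha^{d}$. Its cycle type is $\mu=\alpha^{(d)}$, in the notation of the Remark distinguishing $\alpha^{(r)}$ from $\alpha^r$. Explicitly, a cycle of length $\alpha_i$ in $\sigma_\alpha$ splits under the $d$-th power into $\gcd(\alpha_i,d)$ cycles of length $\alpha_i/\gcd(\alpha_i,d)$. In general $\sigma_\mu$ is not the \emph{standard} permutation of shape $\mu$; it is only conjugate to it. By Lemma \ref{espconj}, together with the observation after the definition of $\mathbf{C}_\alpha$ that conjugate generators give conjugate cyclic subgroups, this is harmless: $X^n/\langle\sigma_\alpha^{d}\rangle=\mathbf{C}_{\mu}$.

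\textbf{Step 3 (form needed later).} The same argument applies verbatim to $\langle\sigma_\alpha\rangle\cap\tau\langle\sigma_\beta\rangle\tau^{-1}$ for any $\tau\in S_n$, because $\tau\langle\sigma_\beta\rangle\tau^{-1}=\langle\tau\sigma_\beta\tau^{-1}\rangle$ is again a subgroup. Combined with Proposition \ref{propprod}(2), this gives
\[
\mathbf{C}_\alpha\times\mathbf{C}_\beta=\sum_{\tau}\mathbf{C}_{\mu(\tau)},
\]
where the sum runs over $\tau\in\langle\sigma_\alpha\rangle\backslash S_n/\langle\sigma_\beta\rangle$.

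\textbf{Main obstacle.} Cyclicity itself is routine. The only point needing care is Step 2: recording that the generator is a power of $\sigma_\alpha$, with cycle type $\alpha^{(d)}$, rather than a standard permutation. The equality $\langle\sigma_\alpha\rangle\cap\langle\sigma_\beta\rangle=\langle\sigma_\mu\rangle$ holds on the nose only with $\sigma_\mu:=\sigma_\alpha^{d}$. If one insists on the standard permutation, it holds only up to conjugacy, which is all Lemma \ref{espconj} requires.
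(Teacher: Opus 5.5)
Your proof is correct and follows the same route as the paper: the intersection is a subgroup of the cyclic group $\langle\sigma_\alpha\rangle$, hence cyclic, and $\mu$ is taken to be the cycle type of a generator $\tau=\sigma_\alpha^{a}$. You make the cyclicity explicit through the minimal exponent $d$, so the generator is $\sigma_\alpha^{d}$ of type $\alpha^{(d)}$, and you state that the standard $\sigma_\mu$ is only conjugate to this generator; conjugacy is what Lemma~\ref{espconj} actually needs, whereas the paper's proof loosely says ``isomorphic''.
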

\begin{proof}\label{atom}
	The intersection $H = \langle \sigma_\alpha \rangle  \cap \langle \sigma_\beta \rangle$ is a subgroup of both $\langle \sigma_\alpha\rangle$ and $\langle \sigma_\beta\rangle$. Since every subgroup of a cyclic group is cyclic, $H$ must be cyclic. Thus, $H$ is generated by some element $\tau$. If $\langle \tau\rangle = H$, then $\tau\in \langle \sigma_\alpha \rangle $ and $\tau\in \langle \sigma_\beta \rangle$, meaning there exist integers $a$ and $b$ such that $\tau= \sigma_\alpha^a=\sigma_\beta^{b}$. Then $H$ is isomorphic to $\langle \sigma_\mu\rangle$ where $\mu$ is the shape of $\tau$.
\end{proof}
\begin{prop}\label{prop_C}
	Let $\alpha$ and $\beta$ be two partitions of $n$. We have the decomposition,
	$$
	\mathbf{C}_{\alpha}\times \mathbf{C}_{\beta} = \sum_{\mu\vdash n} \#\{\langle \sigma_\alpha \rangle \pi \langle \sigma_\beta \rangle \in \langle \sigma_\alpha \rangle\backslash S_n\slash \langle \sigma_\beta \rangle\,|\, \langle \sigma_\alpha \rangle \cap \pi\langle \sigma_\beta \rangle\pi^{-1} \text{ is conjugate to } \langle \sigma_\mu\rangle\} \left(X^n/\langle \sigma_\mu \rangle\right).
	$$
\end{prop}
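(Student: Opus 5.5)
The plan is to reproduce the argument used for the set molecules $\mathbf{E}_\alpha$, with Young subgroups replaced by cyclic subgroups. First, write $\mathbf{C}_\alpha = X^n/\langle\sigma_\alpha\rangle$ and $\mathbf{C}_\beta = X^n/\langle\sigma_\beta\rangle$. Then apply part (2) of Proposition \ref{propprod} with $H=\langle\sigma_\alpha\rangle$ and $K=\langle\sigma_\beta\rangle$. This gives
\[
\mathbf{C}_{\alpha}\times \mathbf{C}_{\beta} = \sum_{\langle\sigma_\alpha\rangle \pi \langle\sigma_\beta\rangle} \left(X^n/\langle\sigma_\alpha\rangle\cap \pi\langle\sigma_\beta\rangle\pi^{-1}\right),
\]
where the sum runs over the double cosets in $\langle\sigma_\alpha\rangle\backslash S_n/\langle\sigma_\beta\rangle$. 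This step is the species-level version of Mackey's theorem (Lemma \ref{thmackey}), applied to the transitive actions of $S_n$ on $\mathbf{C}_\alpha[\underline{n}]$ and $\mathbf{C}_\beta[\underline{n}]$. The stabilizers of these actions are conjugates of $\langle\sigma_\alpha\rangle$ and $\langle\sigma_\beta\rangle$.

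Second, I will identify each summand as some $\mathbf{C}_\mu$. Note that $\pi\langle\sigma_\beta\rangle\pi^{-1}=\langle\pi\sigma_\beta\pi^{-1}\rangle$ is again a cyclic subgroup, generated by a permutation of cycle type $\beta$. The argument of Lemma \ref{atom} then applies verbatim to $\langle\sigma_\alpha\rangle\cap\langle\pi\sigma_\beta\pi^{-1}\rangle$: it is a subgroup of the cyclic group $\langle\sigma_\alpha\rangle$, hence cyclic, generated by some $\tau=\sigma_\alpha^a$. If $\mu=\lambda(\tau)$, then $\tau$ is conjugate to $\sigma_\mu$, so the intersection is conjugate to $\langle\sigma_\mu\rangle$. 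By Lemma \ref{espconj}, the summand is therefore isomorphic to $X^n/\langle\sigma_\mu\rangle=\mathbf{C}_\mu$.

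Third, I will group the double cosets according to this $\mu$. For this to be legitimate, two facts are needed. (i) The conjugacy class of the intersection depends only on the double coset, not on the representative $\pi$. Replacing $\pi$ by $h\pi k$ with $h\in\langle\sigma_\alpha\rangle$ and $k\in\langle\sigma_\beta\rangle$ conjugates the intersection by $h$, because $h$ normalizes $\langle\sigma_\alpha\rangle$ and $k$ normalizes $\langle\sigma_\beta\rangle$. (ii) The partition $\mu$ is uniquely determined by the conjugacy class of the cyclic group. This is the point I expect to be the main obstacle, since a cyclic group has several generators and one must check that they all have the same cycle type. Concretely, if $\gcd(j,o(\tau))=1$, then each cycle of $\tau$ of length $\ell$ divides $o(\tau)$, so $\gcd(j,\ell)=1$ and $\tau^j$ acts as a single $\ell$-cycle on it. Hence $\lambda(\tau^j)=\lambda(\tau)$. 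Combined with part (2) of Lemma \ref{specyc} (distinct $\mu$ give non-isomorphic $\mathbf{C}_\mu$), this shows the grouping is well defined and that the coefficient of $\mathbf{C}_\mu$ is exactly the stated count of double cosets. These counts are non-negative integers, which also yields $b^{\mu}_{\alpha,\beta}\in\mathbb{N}$.
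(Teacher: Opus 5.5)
Your proposal is correct and follows the paper's proof: apply Proposition \ref{propprod}(2) with $H=\langle\sigma_\alpha\rangle$ and $K=\langle\sigma_\beta\rangle$, observe that each intersection is a cyclic subgroup of $\langle\sigma_\alpha\rangle$ (Lemma \ref{atom}), and group the double cosets by $\mu$. Your extra checks fill in details the paper leaves implicit: that the intersection depends only on the double coset, and that all generators of a cyclic subgroup share a cycle type, so $\mu$ is well defined. You also correctly use conjugacy, where Lemma \ref{atom} only asserts an abstract isomorphism.
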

\begin{proof}
Applying Proposition \ref{propprod} (Species Decomposition Theorem), we obtain
	$$
	X^n/\langle \sigma_\alpha \rangle\times X^n/\langle \sigma_\beta \rangle = \sum\limits_{\tau \in \langle \sigma_\alpha \rangle\backslash S_n \slash \langle \sigma_\beta \rangle} X^n/\left(\langle \sigma_\alpha \rangle\cap \tau \langle \sigma_\beta \rangle\tau^{-1}\right).
	$$
	Lemma \ref{atom} ensures that the intersection of two cyclic groups is cyclic. By grouping the double cosets whose resulting intersection is conjugate to $\langle \sigma_\mu \rangle$, we obtain the coefficient $b_{\alpha,\beta}^{\mu}$:
	$$
	X^n/\langle \sigma_\alpha \rangle\times X^n/\langle \sigma_\beta \rangle = \sum_{\mu\vdash n} \#\{\langle \sigma_\alpha \rangle \pi \langle \sigma_\beta \rangle \in \langle \sigma_\alpha \rangle\backslash S_n\slash \langle \sigma_\beta \rangle\,|\, \langle \sigma_\alpha \rangle \cap \pi\langle \sigma_\beta \rangle\pi^{-1}=\langle \sigma_\mu \rangle\} \left(X^n/\langle \sigma_\mu \rangle\right).
	$$
\end{proof}
\begin{defin}
	For every finite set $U$ of size $n$, we define
	$$
	\mathbf{C}_{\alpha,\beta}^{\mu}[U] = \{(s,t) \in (\mathbf{C}_{\alpha}\times \mathbf{C}_{\beta})[U]\; |\; \Aut(s) \cap \Aut(t)\simeq \langle \sigma_\mu \rangle\}.
	$$
\end{defin}
\begin{prop}
	The transformation $\mathbf{C}_{\alpha, \beta}^{\mu}: \mathbb{B}\to \Ens$ is a subspecies of the Hadamard product $\mathbf{C}_{\alpha}\times \mathbf{C}_{\beta}$.
\end{prop}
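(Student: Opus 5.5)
The plan is to follow the argument already given for $\mathbf{E}_{\alpha,\beta}^{\mu}$, checking two things. First, $\mathbf{C}_{\alpha,\beta}^{\mu}[U]$ is a subset of $(\mathbf{C}_{\alpha}\times\mathbf{C}_{\beta})[U]$ for every finite set $U$. Second, this family of subsets is stable under transport of structure along every bijection $f:U\to V$. Functoriality (identities and composition) is then inherited from $\mathbf{C}_{\alpha}\times\mathbf{C}_{\beta}$, since $\mathbf{C}_{\alpha,\beta}^{\mu}[f]$ will be defined as the restriction of $(\mathbf{C}_{\alpha}\times\mathbf{C}_{\beta})[f]=\mathbf{C}_{\alpha}[f]\times\mathbf{C}_{\beta}[f]$. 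The inclusion is immediate from the definition, so all the work is in stability.

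For stability, take $(s,t)\in\mathbf{C}_{\alpha,\beta}^{\mu}[U]$, with $s=\lambda\langle\sigma_\alpha\rangle$ and $t=\lambda'\langle\sigma_\beta\rangle$ for bijections $\lambda,\lambda':[n]\to U$. The stabiliser of $s$ in $S_U$ is $\Aut(s)=\lambda\langle\sigma_\alpha\rangle\lambda^{-1}$, exactly as in the lemma $\Aut(\lambda S_\alpha)=\lambda S_\alpha\lambda^{-1}$. Now let $s'=\mathbf{C}_{\alpha}[f]s=(f\lambda)\langle\sigma_\alpha\rangle$, and similarly $t'=\mathbf{C}_{\beta}[f]t$. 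Then
\[
\Aut(s')=f\,\Aut(s)\,f^{-1}\quad\text{and}\quad \Aut(t')=f\,\Aut(t)\,f^{-1}.
\]
Conjugation by $f$ is a bijection $S_U\to S_V$, and bijections commute with intersections, so
\[
\Aut(s')\cap\Aut(t')=f\bigl(\Aut(s)\cap\Aut(t)\bigr)f^{-1}.
\]
This group is the image of $\Aut(s)\cap\Aut(t)$ under a group isomorphism.

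The main obstacle is pinning down what the relation $\simeq\langle\sigma_\mu\rangle$ means. If it means abstract group isomorphism, the conclusion is immediate from the display above. If it means conjugacy inside the symmetric group, which is the meaning actually needed to match the coefficients of Proposition~\ref{prop_C}, then I will fix a bijection $\lambda:[n]\to U$ and read the condition as saying that $\lambda^{-1}(\Aut(s)\cap\Aut(t))\lambda$ is conjugate in $S_n$ to $\langle\sigma_\mu\rangle$. I then have to show this is independent of the chosen $\lambda$ and is preserved when $U$ is replaced by $V$. Both follow because changing $\lambda$ to $f\lambda$, or to $\lambda\rho$ with $\rho\in S_n$, only conjugates the group by an element of $S_n$. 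By Lemma~\ref{atom} the intersection is cyclic, so I could alternatively characterise the condition as ``the intersection is generated by a permutation of cycle type $\mu$''. Since conjugation by $f$ preserves cycle type, this characterisation is manifestly invariant.

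With stability established, $\mathbf{C}_{\alpha,\beta}^{\mu}$ is a subfunctor, hence a subspecies, of $\mathbf{C}_{\alpha}\times\mathbf{C}_{\beta}$.
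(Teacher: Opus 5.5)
Your proposal is correct and follows the paper's route: the inclusion is immediate, and the real work is stability of $\mathbf{C}_{\alpha,\beta}^{\mu}[U]$ under transport of structure along bijections $f:U\to V$. Your version is in fact more careful than the paper's. The paper only records $\Aut(\mathbf{C}_{\alpha}[f]s)\simeq\Aut(s)$ and $\Aut(\mathbf{C}_{\beta}[f]t)\simeq\Aut(t)$ separately, which says nothing about the intersection. Your identity $\Aut(s')\cap\Aut(t')=f\bigl(\Aut(s)\cap\Aut(t)\bigr)f^{-1}$, together with your check that the conjugacy reading of $\simeq\langle\sigma_\mu\rangle$ is transport-invariant, is what actually justifies that step.
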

\begin{proof}
	We verify that $\mathbf{C}_{\alpha, \beta}^{\mu}$ is stable under the transport of structure. Let $f:U\to V$ be a bijection. For every $(s,t)\in \mathbf{C}_{\alpha,\beta}^{\mu}[U]$, since the automorphism groups are preserved under transport, we have
	$$
	\Aut(\mathbf{C}_{\alpha}[f]s) \simeq \Aut(s)\quad \mbox{ and }\quad  \Aut(\mathbf{C}_{\beta}[f]t)\simeq \Aut(t).
	$$
	It is clear that $(\mathbf{C}_{\alpha}[f]s,\mathbf{C}_{\beta}[f]t) \in (\mathbf{C}_{\alpha}\times \mathbf{C}_{\beta})[V]$. Furthermore, the intersection condition is preserved:
	$$\Aut(\mathbf{C}_{\alpha}[f]s)\cap \Aut(\mathbf{C}_{\beta}[f]t) \simeq \langle \sigma_\mu\rangle.$$
	Therefore, $(\mathbf{C}_{\alpha}[f]s,\mathbf{C}_{\beta}[f]t)\in \mathbf{C}_{\alpha,\beta}^{\mu}[V]$. This confirms that $\mathbf{C}_{\alpha,\beta}^{\mu}$ is a subspecies of $\mathbf{C}_{\alpha}\times \mathbf{C}_{\beta}$.
\end{proof}
Applying Mackey's theorem (Lemma \ref{thmackey}) for $X = \mathbf{C}_{\alpha}[\underline{n}]$ and $Y=\mathbf{C}_{\beta}[\underline{n}]$, with the group action $G=S_{\underline{n}}$, the automorphism groups are $\Aut(x) \simeq \langle \sigma_\alpha \rangle$ and $\Aut(y) \simeq \langle\sigma_\beta\rangle$. Hence, we get the following lemma as a consequence.
\begin{lem}\label{lemcycfirst}
	We have the bijection:
	$$
	S_n\backslash\!\backslash (\mathbf{C}_{\alpha}[\underline{n}]\times \mathbf{C}_{\beta}[\underline{n}]) \to \langle \sigma_\alpha \rangle \backslash S_n \slash \langle \sigma_\beta\rangle.
	$$
\end{lem}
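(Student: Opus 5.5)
The bijection will come from Mackey's theorem (Lemma \ref{thmackey}), applied with $G=S_n$, $X=\mathbf{C}_{\alpha}[\underline{n}]$ and $Y=\mathbf{C}_{\beta}[\underline{n}]$. The species structure supplies everything the theorem needs, so the plan has three steps.

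\emph{Step 1: transitivity.} I check that $S_n$ acts transitively on $X$ and on $Y$, the action being transport of structure. By definition $X=\{\lambda\langle\sigma_\alpha\rangle \mid \lambda:[n]\to[n] \text{ bijective}\}$, with $\pi\cdot\lambda\langle\sigma_\alpha\rangle=(\pi\lambda)\langle\sigma_\alpha\rangle$. For two structures $\lambda_1\langle\sigma_\alpha\rangle$ and $\lambda_2\langle\sigma_\alpha\rangle$, the element $\pi=\lambda_2\lambda_1^{-1}$ sends the first to the second. This is the single-isomorphism-type remark made after the definition of molecular species. The same argument applies to $Y$.

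\emph{Step 2: stabilizers.} I fix base points $x=\mathrm{Id}\,\langle\sigma_\alpha\rangle$ and $y=\mathrm{Id}\,\langle\sigma_\beta\rangle$ and compute their stabilizers. The condition $\pi\langle\sigma_\alpha\rangle=\langle\sigma_\alpha\rangle$ holds exactly when $\pi\in\langle\sigma_\alpha\rangle$, so $\Aut(x)=\langle\sigma_\alpha\rangle$, and likewise $\Aut(y)=\langle\sigma_\beta\rangle$. These are equalities, not merely isomorphisms; for a general base point $\lambda\langle\sigma_\alpha\rangle$ the stabilizer is the conjugate $\lambda\langle\sigma_\alpha\rangle\lambda^{-1}$, exactly as in the analogous lemma for $\mathbf{E}_\alpha$.

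\emph{Step 3: applying Mackey.} Lemma \ref{thmackey} now gives the bijection $S_n(x,gy)\mapsto \langle\sigma_\alpha\rangle g\langle\sigma_\beta\rangle$ from $S_n\backslash\!\backslash(X\times Y)$ to $\langle\sigma_\alpha\rangle\backslash S_n/\langle\sigma_\beta\rangle$. Should a self-contained argument be preferred, I would give it directly.
\begin{itemize}
\item Every orbit contains a pair of the form $(x,gy)$, by transitivity on $X$.
\item The pairs $(x,gy)$ and $(x,g'y)$ lie in the same orbit exactly when some $h\in\Aut(x)$ satisfies $hgy=g'y$.
\item That holds if and only if $g'\in hg\Aut(y)$, that is, $g'\in\langle\sigma_\alpha\rangle g\langle\sigma_\beta\rangle$.
\end{itemize}
The map is therefore well defined and injective, and it is surjective by construction.

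The only delicate point is the choice of base points in Step 2. Mackey's theorem uses actual stabilizers, so I must take $x,y$ with $\Aut(x)=\langle\sigma_\alpha\rangle$ and $\Aut(y)=\langle\sigma_\beta\rangle$ on the nose. With other base points the double-coset space is replaced by an isomorphic one through conjugation, which does not affect the bijection.
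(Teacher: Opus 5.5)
Your proposal is correct and follows the paper's route: the paper likewise applies Mackey's theorem (Lemma~\ref{thmackey}) to the transitive action of $S_n$ on $\mathbf{C}_{\alpha}[\underline{n}]$ and $\mathbf{C}_{\beta}[\underline{n}]$, with stabilizers $\langle\sigma_\alpha\rangle$ and $\langle\sigma_\beta\rangle$. You choose base points $x=\mathrm{Id}\,\langle\sigma_\alpha\rangle$ and $y=\mathrm{Id}\,\langle\sigma_\beta\rangle$ so that the stabilizers are equal to these cyclic groups, rather than merely isomorphic to them as the paper writes; this is a worthwhile tightening, since Mackey's bijection needs the actual stabilizers (or conjugates of them), not an abstract isomorphism.
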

\begin{prop}
	The coefficient $b_{\alpha,\beta}^{\mu}$ is equal to the number of unlabeled structures or isomorphism types of $\mathbf{C}_{\alpha,\beta}^{\mu}$.
\end{prop}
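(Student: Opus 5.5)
The plan mirrors the argument given above for set molecules. By Proposition \ref{prop_C}, $b_{\alpha,\beta}^{\mu}$ is the number of double cosets $\langle\sigma_\alpha\rangle\pi\langle\sigma_\beta\rangle$ for which $\langle\sigma_\alpha\rangle\cap\pi\langle\sigma_\beta\rangle\pi^{-1}$ is conjugate to $\langle\sigma_\mu\rangle$. The goal is to identify this set with the set of isomorphism types of $\mathbf{C}_{\alpha,\beta}^{\mu}$, that is, with the $S_n$-orbits on $\mathbf{C}_{\alpha,\beta}^{\mu}[\underline{n}]$. Since $\mathbf{C}_{\alpha,\beta}^{\mu}$ is a subspecies of $\mathbf{C}_\alpha\times\mathbf{C}_\beta$, its orbits are exactly those orbits of $S_n$ on $\mathbf{C}_\alpha[\underline{n}]\times\mathbf{C}_\beta[\underline{n}]$ that lie inside $\mathbf{C}_{\alpha,\beta}^{\mu}[\underline{n}]$. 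The condition $\Aut(s)\cap\Aut(t)\simeq\langle\sigma_\mu\rangle$, read as conjugacy in $S_n$, is constant on orbits. Indeed, $\Aut(gs)\cap\Aut(gt)=g(\Aut(s)\cap\Aut(t))g^{-1}$.

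The first step is to fix base points $x=\Id\langle\sigma_\alpha\rangle\in\mathbf{C}_\alpha[\underline{n}]$ and $y=\Id\langle\sigma_\beta\rangle\in\mathbf{C}_\beta[\underline{n}]$. Their stabilizers are exactly $\Aut(x)=\langle\sigma_\alpha\rangle$ and $\Aut(y)=\langle\sigma_\beta\rangle$, by the same computation $\Aut(\lambda H)=\lambda H\lambda^{-1}$ used for set molecules.

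By transitivity, every orbit contains a pair of the form $(x,\pi y)$. Lemma \ref{lemcycfirst} then says that the map $S_n(x,\pi y)\mapsto\langle\sigma_\alpha\rangle\pi\langle\sigma_\beta\rangle$ is a well-defined bijection. For this pair,
\[
\Aut(x)\cap\Aut(\pi y)=\langle\sigma_\alpha\rangle\cap\pi\langle\sigma_\beta\rangle\pi^{-1}.
\]
Hence the orbit of $(x,\pi y)$ lies in $\mathbf{C}_{\alpha,\beta}^{\mu}[\underline{n}]$ exactly when the corresponding double coset satisfies the defining condition of $b_{\alpha,\beta}^{\mu}$. This condition does not depend on the chosen representative $\pi$. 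Replacing $\pi$ by $a\pi b$ with $a\in\langle\sigma_\alpha\rangle$ and $b\in\langle\sigma_\beta\rangle$ conjugates the intersection by $a$, because $a$ normalizes $\langle\sigma_\alpha\rangle$ and $b$ normalizes $\langle\sigma_\beta\rangle$. The bijection therefore restricts to a bijection between the isomorphism types of $\mathbf{C}_{\alpha,\beta}^{\mu}$ and the double cosets counted by $b_{\alpha,\beta}^{\mu}$.

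The main obstacle is interpretive rather than computational. The symbol ``$\simeq$'' in the definition of $\mathbf{C}_{\alpha,\beta}^{\mu}$ must mean conjugacy inside $S_n$, that is, equality of the molecular species $X^n/H$, and not merely abstract isomorphism. Abstractly isomorphic cyclic groups of the same order can have different cycle types, so abstract isomorphism would give the wrong count. I would state explicitly that the intersection, being cyclic by Lemma \ref{atom}, is conjugate to $\langle\sigma_\mu\rangle$ for a partition $\mu$ determined by Lemma \ref{espconj}: namely $\mu$ is the cycle type of a generator. That choice is well defined, because different generators of the same cyclic subgroup have possibly different cycle types, but they generate conjugate subgroups exactly when the subgroups correspond to the same species. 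With this convention the subspecies $\mathbf{C}_{\alpha,\beta}^{\mu}$ coincides with the sum of the summands $X^n/\langle\sigma_\mu\rangle$ in Proposition \ref{prop_C}. Each molecular summand has exactly one isomorphism type, which gives the count a second way and confirms the bijection.
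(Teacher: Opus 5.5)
Your argument is correct and is the paper's own: restrict the Mackey bijection of Lemma \ref{lemcycfirst} to the orbits whose stabilizer intersection $\Aut(x)\cap\Aut(\pi y)=\langle\sigma_\alpha\rangle\cap\pi\langle\sigma_\beta\rangle\pi^{-1}$ is conjugate to $\langle\sigma_\mu\rangle$. You also supply the orbit-invariance check and the reading of $\simeq$ as $S_n$-conjugacy, both of which the paper leaves implicit.

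One aside is wrong, though harmless to the main line: you say different generators of the same cyclic subgroup may have different cycle types, but every generator $\tau^k$ of $\langle\tau\rangle\le S_n$, with $\gcd(k,o(\tau))=1$, has the same cycle type as $\tau$. That fact, rather than an appeal to Lemma \ref{espconj}, is why $\mu$ is well defined.
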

\begin{proof}
The bijection provided by Lemma \ref{lemcycfirst} shows that the number of unlabelled structures of $\mathbf{C}_{\alpha}\times \mathrm{C}_{\beta}$ is equal to the cardinality of the set of double cosets $\langle \sigma_\alpha\rangle\backslash S_n\slash \langle \sigma_\beta\rangle$. The coefficient $b_{\alpha,\beta}^{\mu}$ counts the number of double cosets $\langle \sigma_\alpha \rangle \pi \langle \sigma_\beta\rangle$ such that the stabilizer intersection $\langle \sigma_\alpha \rangle \cap \pi \langle \sigma_\beta\rangle\pi^{-1}$ is conjugate to $\langle \sigma_\mu \rangle$. This precisely corresponds to the number of isomorphism types in $\mathbf{C}_{\alpha}\times \mathbf{C}_{\beta}$ whose intersection of automorphism groups is $\langle \sigma_\mu\rangle$.
\end{proof}
\begin{cor}
	The category $\mathrm{CEsp}$ is closed under the Cartesian product (Hadamard product).
\end{cor}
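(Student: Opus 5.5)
The plan mirrors the proof of the corresponding corollary for $\mathrm{HEsp}$. Take two objects $F=\sum_{\alpha\vdash n} a_{\alpha}\mathbf{C}_{\alpha}$ and $G=\sum_{\beta\vdash n} b_{\beta}\mathbf{C}_{\beta}$ of $\mathrm{CEsp}$, with $a_\alpha,b_\beta\in\mathbb{N}$. Use the fact that the Cartesian product of species is commutative, and that it distributes over sums up to natural isomorphism (see \cite{JOYAL19811}\cite{FGP}). This gives
\begin{align*}
F\times G = \sum_{\alpha,\beta\vdash n} a_{\alpha}b_{\beta}\,\mathbf{C}_{\alpha}\times\mathbf{C}_{\beta}.
\end{align*}
Each product $\mathbf{C}_{\alpha}\times\mathbf{C}_{\beta}$ is then replaced by the decomposition of Proposition \ref{prop_C}, namely $\sum_{\mu\vdash n} b_{\alpha,\beta}^{\mu}\,\mathbf{C}_{\mu}$.

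The key point to justify is that every summand $X^n/(\langle\sigma_\alpha\rangle\cap\tau\langle\sigma_\beta\rangle\tau^{-1})$ appearing in Proposition \ref{propprod} is indeed some $\mathbf{C}_{\mu}$. The intersection is a subgroup of the cyclic group $\langle\sigma_\alpha\rangle$, so it is cyclic, generated by some $\tau'=\sigma_\alpha^a$ (Lemma \ref{atom}). Let $\mu=\lambda(\tau')$. By Lemma \ref{espconj}, together with the fact that cyclic subgroups generated by permutations of the same cycle type are conjugate, we get $X^n/\langle\tau'\rangle=\mathbf{C}_{\mu}$. This holds even when the intersection is trivial, in which case $\mu=1^n$. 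The only care needed is to use \emph{conjugacy}, not mere abstract isomorphism, of the intersection with $\langle\sigma_\mu\rangle$: the species $X^n/H$ depends on the conjugacy class of $H$ in $S_n$. Conjugacy is exactly what the choice of $\mu$ as the cycle type of a generator provides, so this is the step I expect to need the most attention. The coefficients $b_{\alpha,\beta}^{\mu}$ are cardinalities of sets of double cosets, hence lie in $\mathbb{N}$.

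Collecting terms gives
\begin{align*}
F\times G=\sum_{\mu\vdash n}\Bigl(\sum_{\alpha,\beta\vdash n} a_{\alpha}b_{\beta}\,b_{\alpha,\beta}^{\mu}\Bigr)\mathbf{C}_{\mu},
\end{align*}
whose coefficients are non-negative integers, so $F\times G$ is an object of $\mathrm{CEsp}$. If objects with components in several degrees are allowed, the product is computed degreewise, since $(F\times G)[U]$ only involves the $|U|$-homogeneous parts; closure then follows degree by degree. Morphisms pose no issue, since $\mathrm{CEsp}$ is a full subcategory and products of natural transformations are natural. Passing to cycle indices via Proposition \ref{symesp} gives the symmetric-function statement $\mathbf{C}_{\alpha}(\mathbf{z})\star\mathbf{C}_{\beta}(\mathbf{z})=\sum_\mu b^{\mu}_{\alpha,\beta}\mathbf{C}_\mu(\mathbf{z})$ with $b^{\mu}_{\alpha,\beta}\in\mathbb{N}$.
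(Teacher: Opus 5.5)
Your proposal is correct and follows the paper's proof essentially verbatim: you distribute the Cartesian product over the two sums, substitute the decomposition of Proposition~\ref{prop_C} for each $\mathbf{C}_\alpha\times\mathbf{C}_\beta$, and regroup the non-negative integer coefficients. Your extra check that each summand $X^n/(\langle\sigma_\alpha\rangle\cap\tau\langle\sigma_\beta\rangle\tau^{-1})$ equals some $\mathbf{C}_\mu$ because the intersection is \emph{conjugate} to $\langle\sigma_\mu\rangle$ (not merely isomorphic, as Lemma~\ref{atom} loosely states) usefully makes explicit a point the paper leaves implicit.
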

\begin{proof}
	As the Cartesian product of species is commutative and distributive (see \cite{FGP}), we have
	\begin{align*}
		\sum_{\mu\vdash n} a_{\mu} \mathbf{C}_{\mu} \times \sum_{\mu\vdash n} b_{\mu} \mathbf{C}_{\mu} &= \sum_{\alpha,\beta\vdash n} (a_{\alpha} b_{\beta}) \mathbf{C}_{\alpha}\times \mathbf{C}_{\beta}\\
		&=\sum_{\alpha,\beta\vdash n} (a_{\alpha} b_{\beta})\sum_{\mu\vdash n}b_{\alpha,\beta}^{\mu} \mathbf{C}_{\mu}\\
		&=\sum_{\mu\vdash n}\sum_{\alpha,\beta\vdash n} (a_{\alpha} b_{\beta})b_{\alpha,\beta}^{\mu} \mathbf{C}_{\mu}.
	\end{align*}
\end{proof}
\begin{ex}
The following examples illustrate the decomposition for $n=6$:
	$$
	\mathbf{C}_{6}(\mathbf{z})\star \mathbf{C}_{33}(\mathbf{z}) = 38 \, \mathbf{C}_{111111}(\mathbf{z}) + 6 \, \mathbf{C}_{33}(\mathbf{z}).
	$$
	$$
	\mathbf{C}_{6}(\mathbf{z})\star \mathbf{C}_{6}(\mathbf{z}) = 18 \, \mathbf{C}_{111111}(\mathbf{z}) + 2 \, \mathbf{C}_{222}(\mathbf{z}) + 2 \, \mathbf{C}_{33}(\mathbf{z}) + 2 \, \mathbf{C}_{6}(\mathbf{z}).
	$$
	$$
	\mathbf{C}_{42}(\mathbf{z})\star \mathbf{C}_{6}(\mathbf{z}) = 30 \, \mathbf{C}_{111111}(\mathbf{z}).
	$$
\end{ex}
To the best of our knowledge, a systematic enumeration of the double cosets $\langle \sigma_\alpha\rangle \backslash S_n\slash \langle\sigma_\beta\rangle$ in terms of matrix-like combinatorial objects analogous to the A.M. Garsia and J. Remmel \cite{GR} matrices $\mathrm{NM}_{\alpha,\beta}^{\mu}$ is desirable, but remains an open problem.

\begin{prob}
	Find a direct combinatorial interpretation of the coefficients $b_{\alpha,\beta}^{\mu}$.
\end{prob}
Here is a list of computed coefficients for $n=6$:
\begin{itemize}
	\item $\mathbf{C}_{6}(\mathbf{z}) \star \mathbf{C}_{6}(\mathbf{z}) = 18 \, \mathbf{C}_{111111}(\mathbf{z}) + 2 \, \mathbf{C}_{222}(\mathbf{z}) + 2 \, \mathbf{C}_{33}(\mathbf{z}) + 2 \, \mathbf{C}_{6}(\mathbf{z})$
	\item $\mathbf{C}_{6}(\mathbf{z}) \star \mathbf{C}_{51}(\mathbf{z}) = 24 \, \mathbf{C}_{111111}(\mathbf{z})$
	\item $\mathbf{C}_{6}(\mathbf{z}) \star \mathbf{C}_{42}(\mathbf{z}) = 30 \, \mathbf{C}_{111111}(\mathbf{z})$
	\item $\mathbf{C}_{6}(\mathbf{z}) \star \mathbf{C}_{33}(\mathbf{z}) = 38 \, \mathbf{C}_{111111}(\mathbf{z}) + 6 \, \mathbf{C}_{33}(\mathbf{z})$
	\item $\mathbf{C}_{6}(\mathbf{z}) \star \mathbf{C}_{222}(\mathbf{z}) = 56 \, \mathbf{C}_{111111}(\mathbf{z}) + 8 \, \mathbf{C}_{222}(\mathbf{z})$
	\item $\mathbf{C}_{6}(\mathbf{z}) \star \mathbf{C}_{111111}(\mathbf{z}) = 120 \, \mathbf{C}_{111111}(\mathbf{z})$
	\item $\mathbf{C}_{51}(\mathbf{z}) \star \mathbf{C}_{51}(\mathbf{z}) = 28 \, \mathbf{C}_{111111}(\mathbf{z}) + 4 \, \mathbf{C}_{51}(\mathbf{z})$
	\item $\mathbf{C}_{42}(\mathbf{z}) \star \mathbf{C}_{42}(\mathbf{z}) = 44 \, \mathbf{C}_{111111}(\mathbf{z}) + 4 \, \mathbf{C}_{42}(\mathbf{z})$
	\item $\mathbf{C}_{42}(\mathbf{z}) \star \mathbf{C}_{411}(\mathbf{z}) = 44 \, \mathbf{C}_{111111}(\mathbf{z}) + 2 \, \mathbf{C}_{22 11}(\mathbf{z})$
	\item $\mathbf{C}_{33}(\mathbf{z}) \star \mathbf{C}_{33}(\mathbf{z}) = 76 \, \mathbf{C}_{111111}(\mathbf{z}) + 12 \, \mathbf{C}_{33}(\mathbf{z})$
	\item $\mathbf{C}_{321}(\mathbf{z}) \star \mathbf{C}_{321}(\mathbf{z}) = 18 \, \mathbf{C}_{111111}(\mathbf{z}) + 2 \, \mathbf{C}_{21111}(\mathbf{z}) + 2 \, \mathbf{C}_{31^3}(\mathbf{z}) + 2 \, \mathbf{C}_{321}(\mathbf{z})$.
\end{itemize}

In the following, a similar property is developed for the family $\{\mathbf{K}_{\alpha}(\mathbf{z})\}_{\alpha\vdash n}$. For two partitions ${\alpha}$ and ${\beta}$, the decomposition of the Hadamard product is given by:
$$
\mathbf{K}_{\alpha}(\mathbf{z})\star \mathbf{K}_{\beta}(\mathbf{z}) \,=\, \sum_{\mu\vdash n} j_{\alpha,\beta}^{\mu} \mathbf{K}_{\mu}(\mathbf{z}) \mbox{ where $j_{\alpha,\beta}^{\mu}\in\mathbb{N}$}.
$$
\begin{defin}
	We define the category $\mathrm{K}\mathrm{Esp}$ whose objects are non-negative integer linear combinations of the $\mathbf{K}_{\mu}$ species, i.e.,
	$$
	F = \sum_{\mu\vdash n} a_{\mu} \mathbf{K}_{\mu} \quad \mbox{ where $a_{\mu}\in\mathbb{N}$}.
	$$
	The morphisms in this category are natural transformations (species morphisms).
\end{defin}
\begin{ex}
	Let $F$ be an object of $\mathrm{K}\mathrm{Esp}$ defined by
	$$
	F = \mathbf{K}_{42} + 2\mathbf{K}_{33} + 2\mathbf{K}_{222}.
	$$
This object can be interpreted in the same way as in Example \ref{exdeco}, where the coefficients represent the multiplicity or number of distinct decorations available for the corresponding $\mathbf{K}_{\mu}$-structure.
\end{ex}
As before, we consider the family of subgroups of $S_n$ defined by the partitions $\alpha$ that we have seen previously:
$$
\{G_\alpha\}_{\alpha\vdash n}.
$$
\begin{lem}
	Let $\alpha$ and $\beta$ be two partitions of $n$. The intersection of the corresponding product groups, $G_{\alpha} \cap G_{\beta}$, is itself a group of the same form $G_{\mu}$, i.e.,
	$$
	G_\alpha \cap G_\beta = G_\mu.
	$$
\end{lem}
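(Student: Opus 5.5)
The plan is to mirror the proof of the analogous statement for Young subgroups $S_\alpha\cap S_\beta=S_\mu$ and the cyclic lemma above. I will work orbit by orbit: first understand how $G_\alpha$ acts on each of its orbits, then cut those orbits down by the constraints coming from $G_\beta$. As in Proposition \ref{prop_C}, the statement will actually be used for $G_\alpha\cap \tau G_\beta\tau^{-1}$, and equality can only be meant up to conjugacy in $S_n$. So I will aim for the statement ``$G_\alpha\cap \tau G_\beta \tau^{-1}$ is conjugate to $G_\mu$ for some $\mu\vdash n$'', which is what Lemma \ref{espconj} needs.

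\textbf{Step 1: $G_\alpha$ is a product of uniform cyclic blocks.} Write $\alpha=(i_1^{\alpha_1},\ldots,i_m^{\alpha_m})$. Then $G_\alpha=\prod_j \langle c_j\rangle$, where $c_j$ is a product of $\alpha_j$ disjoint $i_j$-cycles on a block $B_j$ of $[n]$, and the $B_j$ are pairwise disjoint. For an element $g=\prod_j c_j^{e_j}$, the restriction $c_j^{e_j}|_{B_j}$ has cycle type $\big((i_j/d)^{\alpha_j d}\big)$ with $d=\gcd(e_j,i_j)$. This is uniform on each block, and it is the same computation as the $\alpha^{(r)}$ bookkeeping of Corollary \ref{ctop}.

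\textbf{Step 2: intersect blockwise.} Let $H=G_\alpha\cap G'$ with $G'=\tau G_\beta\tau^{-1}$, which has blocks $B'_k$ and generators $c'_k$. Every $g\in H$ preserves each $B_j\cap B'_k$. I want to show that $H$ splits as a product of its restrictions to the nonempty sets $B_j\cap B'_k$. On each such set, the restriction should be a cyclic group generated by a power of $c_j$ that is simultaneously a power of $c'_k$. This reuses the argument of Lemma \ref{atom}: a subgroup of a cyclic group is cyclic, and a power of a uniform product of cycles is again a uniform product of cycles. If the splitting holds, then after merging the factors whose generators have equal cycle length, $H$ is conjugate to some $G_\mu$. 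Here $\mu$ is read off from the cycle types of the restricted generators, together with the fixed points coming from trivial factors.

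\textbf{Main obstacle.} The splitting in Step 2 is the real difficulty, and I expect it may genuinely fail. $G_\alpha$ is an abelian product, and its subgroups need not be products of their projections. A diagonal subgroup such as $\langle (1234)(56)\rangle\le G_{42}=\langle(1234)\rangle\times\langle(56)\rangle$ has order $4$, while $G_{42}$ has order $8$ and $G_{411}=\langle(1234)\rangle$ is not conjugate to it. So the first thing I would do is test whether such a diagonal subgroup can arise as $G_\alpha\cap\tau G_\beta\tau^{-1}$, by checking small cases (e.g.\ $n=6$, $\alpha=\beta=(4,2)$) by computer.

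If diagonal subgroups never occur, the argument would show that they are excluded because $G'$ itself is a product over its blocks $B'_k$. In that case Step 2 goes through by taking projections onto the $B_j\cap B'_k$ and showing that each projection lies in $H$. If they do occur, the lemma must be weakened. The natural weaker target is ``$H$ is conjugate to a subgroup generated by commuting uniform cyclic factors''. Accepting that would mean the closure of $\mathrm{KEsp}$ needs a larger family of molecules than $\{\mathbf{K}_\mu\}$.
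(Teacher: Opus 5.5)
Your suspicion that the lemma cannot be proved is correct: in the form in which it is used (for $G_\alpha\cap\tau G_\beta\tau^{-1}$, up to conjugacy, as in the proposition that follows it), the statement is false. But your plan aims at the wrong step, and carried out as written it would end by ``proving'' it.

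\emph{The obstruction you test for cannot occur.} Each nontrivial $c_j^{e}$ is uniform on all of $B_j$. So for $x\in H$ and each $L\ge 2$, the product $x_L$ of all $L$-cycles of $x$ is a sub-product of the $c_j^{e_j}$, and also a sub-product of the $c_k'^{f_k}$. Hence $x_L\in H$ for every $L$. In particular $(1234)(56)\in H$ forces $(1234),(56)\in H$. Your test with $\alpha=\beta=(4,2)$ therefore comes back clean, and by your own plan you would then declare Step 2 done.

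\emph{The projection step is also false, though harmlessly so.} Same-length diagonals do occur. For $G_{44}=\langle(1234)(5678)\rangle$ and the conjugate $\langle(1234)\rangle\times\langle(57)(68)\rangle$ of $G_{422}$, the intersection is $\langle(13)(24)(57)(68)\rangle$. So the projections to the $B_j\cap B'_k$ need not lie in $H$. This case does no damage, since such diagonals are exactly what the factors of $G_\mu$ are.

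\emph{The step that genuinely fails is the merge.} Two independent factors $\langle\rho_1\rangle\times\langle\rho_2\rangle$ with the same cycle length $L$ on disjoint supports form a group of order $L^2$, not the diagonal $\langle\rho_1\rho_2\rangle$ of order $L$. Since $G_\mu\cong\prod_j\mathbb{Z}_{i_j}$ over the pairwise distinct part sizes $i_j$, no $G_\mu$ is even isomorphic to $\mathbb{Z}_2\times\mathbb{Z}_2$.

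\emph{Counterexample.} Let $n=8$ and $\alpha=\beta=(4,2,2)$, so $G_\alpha=\langle(1234)\rangle\times\langle(56)(78)\rangle$. Conjugate $G_\beta$ by the $\tau$ with $\tau(1),\dots,\tau(8)=5,7,6,8,1,3,2,4$. This gives $\tau G_\beta\tau^{-1}=\langle(13)(24)\rangle\times\langle(5768)\rangle$. Both groups are products of their restrictions to $\{1,2,3,4\}$ and $\{5,6,7,8\}$, and $(5768)^2=(56)(78)$, so
\[
G_\alpha\cap\tau G_\beta\tau^{-1}=\langle(13)(24)\rangle\times\langle(56)(78)\rangle=:V\cong\mathbb{Z}_2\times\mathbb{Z}_2 .
\]
Here your splitting holds perfectly and only the merge breaks. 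Consequently $\mathbf{K}_{422}\times\mathbf{K}_{422}$ contains the molecule $X^8/V$ in its (unique) molecular decomposition. By Lemma \ref{espconj} this molecule is no $\mathbf{K}_\mu$. Its cycle index is
\[
\tfrac14(p_{1^8}+2p_{2^21^4}+p_{2^4})=\mathbf{K}_{2^21^4}(\mathbf{z})+\tfrac12\mathbf{K}_{2^4}(\mathbf{z})-\tfrac12\mathbf{K}_{1^8}(\mathbf{z}).
\]

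\emph{The paper's own proof breaks at the same two places.} It asserts $H=\prod_{i,j}(H\cap S_{Z_{ij}})$, which the $G_{44}$ example refutes. It then ``groups the cyclic factors according to their cycle length'', which is the invalid merge.

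Your fallback target is the true statement: $H$ is generated by commuting elements, each with a single nontrivial cycle length. This follows from the $x_L$ argument above. As you anticipated, it means $\mathrm{KEsp}$ is not closed under $\times$ unless the family of molecules is enlarged.
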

\begin{proof}
Let $\alpha = (i_1^{a_1}, \dots, i_m^{a_m})$ and $\beta = (j_1^{b_1}, \dots, j_k^{b_k})$ be two partitions of $n$. The groups $G_{\alpha}$ and $G_{\beta}$ are internal direct products of cyclic groups $A_i$ and $B_j$ acting on disjoint supports $X_i$ and $Y_j$, respectively.
$$ G_{\alpha} = \prod_{i=1}^m A_i, \quad G_{\beta} = \prod_{j=1}^k B_j. $$
Let $H = G_{\alpha} \cap G_{\beta}$. As the intersection of abelian subgroups, $H$ is also abelian.
For any $h \in H$, $h$ must stabilize the support partitions $\{X_i\}_{i=1}^m$ and $\{Y_j\}_{j=1}^k$. Consequently, $h$ must stabilize the refined partition $Z_{ij} = X_i \cap Y_j$.
Thus, $H$ decomposes as an internal direct product:
$$ H = \prod_{i,j} H_{ij}, \quad \text{where } H_{ij} = H \cap S_{Z_{ij}}. $$
The component $H_{ij}$ is a subgroup of $A_i$ and $B_j$ when restricted to $Z_{ij}$. Since $A_i$ and $B_j$ are cyclic groups (on their full supports), $H_{ij}$ must also be cyclic, say $H_{ij} = \langle \rho_{ij} \rangle$.
Since $h \in G_\alpha$, the action of $h$ on $X_i$ has all cycles of length dividing $i_i$. Similarly, the action of $h$ on $Y_j$ has all cycles of length dividing $j_j$. It follows that the cycle structure of $\rho_{ij}$ on $Z_{ij}$ must consist of cycles all having the same length, $l_{ij} = \text{ord}(\rho_{ij})$.
The group $H$ is therefore the direct product of disjoint cyclic groups $H = \prod_{i,j} \langle \rho_{ij} \rangle$. By grouping these cyclic factors according to their cycle length $L_v$, we show that $H$ has the structure $G_{\mu}$ for the partition $\mu$ derived from these cycle lengths.
\end{proof}
\begin{prop}
	Let $\alpha,\beta$ be two partitions of $n$. We have the decomposition,
	$$
	\mathbf{K}_{\alpha}\times \mathbf{K}_{\beta} = \sum_{\mu\vdash n} \#\{G_\alpha \pi G_\beta \in G_{\alpha}\backslash S_n\slash G_{\beta}\,|\, G_{\alpha} \cap \pi G_{\beta}\pi^{-1} \text{ is conjugate to } G_\mu\} \mathbf{K}_{\mu}.
	$$
\end{prop}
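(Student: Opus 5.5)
The plan mirrors the proof of Proposition \ref{prop_C}. First, $\mathbf{K}_{\alpha}=X^n/G_{\alpha}$ and $\mathbf{K}_{\beta}=X^n/G_{\beta}$ by definition. Proposition \ref{propprod}(2) then gives the decomposition of the Cartesian product into molecular species indexed by double cosets:
\begin{align*}
\mathbf{K}_{\alpha}\times\mathbf{K}_{\beta}=\sum_{G_\alpha\pi G_\beta\in G_\alpha\backslash S_n\slash G_\beta} X^n/\left(G_\alpha\cap \pi G_\beta\pi^{-1}\right).
\end{align*}
Each summand is well defined, i.e. independent of the representative $\pi$. Replacing $\pi$ by $g\pi h$ with $g\in G_\alpha$ and $h\in G_\beta$ conjugates the intersection by $g$. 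By Lemma \ref{espconj}, the resulting molecular species is therefore unchanged.

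The second step identifies each summand as some $\mathbf{K}_{\mu}$. Note that $\pi G_\beta\pi^{-1}$ is a "relabelled" copy of $G_\beta$: it is the direct product of the cyclic groups generated by the products of equal-length cycles of $\pi\sigma_\beta\pi^{-1}$, supported on blocks $\pi Y_j$. The preceding lemma on $G_\alpha\cap G_\beta$ should apply verbatim to $G_\alpha\cap \pi G_\beta\pi^{-1}$, since its argument only uses the block-support and cyclic structure and not the standard labelling. The conclusion is that the intersection is conjugate to $G_\mu$ for some $\mu\vdash n$. Lemma \ref{espconj} then gives $X^n/(G_\alpha\cap\pi G_\beta\pi^{-1})=X^n/G_\mu=\mathbf{K}_\mu$. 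By Lemma \ref{specyc}(3), $\mu$ is uniquely determined by the conjugacy class of the intersection.

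Finally, grouping the double cosets according to this $\mu$ yields the stated coefficient, which is a nonnegative integer by construction. As a consistency check, Mackey's theorem (Lemma \ref{thmackey}) identifies these double cosets with the isomorphism types of $\mathbf{K}_\alpha\times\mathbf{K}_\beta$, as in Lemma \ref{lemcycfirst}.

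The main obstacle is the second step. I need the intersection of $G_\alpha$ with a conjugate of $G_\beta$ to be genuinely conjugate to some $G_\mu$, not merely abstractly isomorphic to a product of cyclic groups. Concretely, each factor $H_{ij}$ acting on a block $Z_{ij}$ must be generated by a permutation all of whose cycles have equal length. Moreover, these factors must assemble, up to conjugacy, as $\prod\langle\sigma_{\ell^{r}}\rangle$ with blocks merged by length.

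I would first prove that a subgroup of $\langle\sigma_{i^{a}}\rangle$ (restricted to an invariant set) is generated by a power of $\sigma_{i^a}$, which is semiregular and so has equal-length cycles. I would then check that the projection of the intersection onto each $Z_{ij}$ is in fact a direct factor.

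If the merging step fails, so that two factors of the same length $\ell$ on different blocks do not combine into a single $\langle\sigma_{\ell^{r}}\rangle$ but remain as $\langle\sigma_{\ell^{r_1}}\rangle\times\langle\sigma_{\ell^{r_2}}\rangle$, I would have to weaken the claim. Either the sum would be over all products of such cyclic factors, or the statement would restrict to the cases where merging holds. I expect this to be the delicate point of the argument.
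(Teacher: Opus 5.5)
\textbf{The step you flagged fails, and with it the proposition as stated.} Your plan is the paper's proof: Proposition~\ref{propprod}(2), then the lemma that $G_\alpha\cap G_\beta$ is again some $G_\mu$, then grouping by $\mu$.

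Take $n=8$ and $\alpha=\beta=(4,2,2)$, so $G_{422}=\langle(1,2,3,4)\rangle\times\langle(5,6)(7,8)\rangle$. Let $\pi$ send $1,\dots,8$ to $5,7,6,8,1,3,2,4$. Then $\pi G_{422}\pi^{-1}=\langle(5,7,6,8)\rangle\times\langle(1,3)(2,4)\rangle$. Comparing restrictions to $\{1,2,3,4\}$ and $\{5,\dots,8\}$ gives
\[
H=G_{422}\cap\pi G_{422}\pi^{-1}=\{\mathrm{id},\,(1,3)(2,4),\,(5,6)(7,8),\,(1,3)(2,4)(5,6)(7,8)\}\cong\mathbb{Z}_2\times\mathbb{Z}_2 .
\]
Its orbits are four $2$-sets. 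The orbits of $G_\mu$ are the cycles of $\sigma_\mu$, so the only candidate is $\mu=(2^4)$, but $|G_{2^4}|=2\neq 4$. Hence the summand $X^8/H=\mathbf{C}_{2^2}\cdot\mathbf{C}_{2^2}$ is not any $\mathbf{K}_\mu$ (Lemma~\ref{espconj}), and this double coset is counted for no $\mu$ on the right.

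Every double coset contributes exactly one isomorphism type to $\mathbf{K}_{422}\times\mathbf{K}_{422}$. So the right-hand side has strictly fewer unlabelled structures than the left; equivalently, the cycle indices differ once all $p_k$ are set to $1$. The identity therefore fails both for species and for symmetric functions.

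In the paper's lemma the failure is the last sentence, ``grouping these cyclic factors according to their cycle length''. Two independent factors $\langle g_1\rangle,\langle g_2\rangle$ of the same order $\ell$ on disjoint supports give $\mathbb{Z}_\ell^2$. They do not give the single diagonal group $\langle g_1g_2\rangle$ that $G_\mu$ requires.

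\textbf{What is actually true.} Join a support $X_i$ of a factor of $G_\alpha$ to a support $\pi Y_j$ of a factor of $\pi G_\beta\pi^{-1}$ whenever they meet, and let $\Gamma$ run over the connected components of this graph.
\begin{itemize}
\item Each factor acts semiregularly on its support, so an element of $H$ trivial on one block is trivial on every adjacent block.
\item Hence restriction to any single block embeds $H\cap S_{\bigcup\Gamma}$ into a cyclic group.
\item It follows that $H=\prod_\Gamma\langle g_\Gamma\rangle$ with each $g_\Gamma$ semiregular on $\bigcup\Gamma$.
\end{itemize}
Then $H$ is conjugate to some $G_\mu$ if and only if the nontrivial orders of the $g_\Gamma$ are pairwise distinct. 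This always holds for $n\le 7$, which is why the paper's $n=6$ tables look consistent.

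The splitting is by components, not by the cells $Z_{ij}$. For $G_{42}\cap\langle(1,3)(2,4)(5,6)\rangle$, every $H\cap S_{Z_{ij}}$ is trivial although $H$ is not. So your planned check that each $Z_{ij}$ carries a direct factor would also fail.

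\textbf{Your fallback is the right repair.} Products $\mathbf{C}_{\ell_1^{a_1}}\cdots\mathbf{C}_{\ell_s^{a_s}}$ with repeated lengths allowed are closed under $\times$. But that family is not indexed by partitions, and the $\mathbf{K}_\mu(\mathbf{z})$-coefficients of $\mathbf{K}_\alpha(\mathbf{z})\star\mathbf{K}_\beta(\mathbf{z})$ are then no longer double-coset counts. In the example, the expansion $614\,\mathbf{K}_{1^8}+16\,\mathbf{K}_{2^21^4}+10\,\mathbf{K}_{2^4}+8\,\mathbf{K}_{41^4}+8\,\mathbf{K}_{422}$ happens to stay in $\mathbb{N}$. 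That is because $Z_{\mathbf{C}_{2^2}\cdot\mathbf{C}_{2^2}}=-\tfrac12\mathbf{K}_{1^8}+\mathbf{K}_{2^21^4}+\tfrac12\mathbf{K}_{2^4}$ gets absorbed by the other terms. Positivity no longer follows from this argument.
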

\begin{proof}
Proposition \ref{propprod} (Species Decomposition Theorem) assures us that
$$
X^n/G_{\alpha}\times X^n/G_{\beta} = \sum\limits_{\tau \in G_{\alpha}\backslash S_n \slash G_\beta} X^n/\left(G_{\alpha}\cap \tau G_{\beta}\tau^{-1}\right).
$$
The preceding lemma allows us to group the terms where the intersection $G_{\alpha}\cap \tau G_{\beta}\tau^{-1}$ is conjugate to $G_\mu$. Since $\mathbf{K}_{\mu} = X^n/G_{\mu}$, we obtain:
\begin{align*}
X^n/G_\alpha\times X^n/G_\beta &= \sum_{\mu\vdash n} \#\{G_\alpha \pi G_\beta \in G_{\alpha}\backslash S_n\slash G_{\beta}\,|\, G_{\alpha} \cap \pi G_{\beta}\pi^{-1}=G_\mu\} \left(X^n/G_{\mu}\right)\\
&= \sum_{\mu\vdash n} \#\{G_\alpha \pi G_\beta \in G_{\alpha}\backslash S_n\slash G_{\beta}\,|\, G_{\alpha} \cap \pi G_{\beta}\pi^{-1}=G_\mu\} \mathbf{K}_{\mu}.
\end{align*}
\end{proof}
\begin{defin}
For every finite set $U$ of size $n$, we define the set of pairs of structures with a fixed intersection of automorphism groups:
$$
\mathbf{K}_{\alpha,\beta}^{\mu}[U] = \{(s,t) \in (\mathbf{K}_{\alpha}\times \mathbf{K}_{\beta})[U]\; |\; \Aut(s) \cap \Aut(t)\simeq G_{\mu}\}.
$$
\end{defin}
\begin{prop}
	The transformation $\mathbf{K}_{\alpha, \beta}^{\mu}: \mathbb{B}\to \Ens$ is a subspecies of the Hadamard product $\mathbf{K}_{\alpha}\times \mathbf{K}_{\beta}$.
\end{prop}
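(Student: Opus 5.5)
The plan follows the template already used for $\mathbf{E}_{\alpha,\beta}^{\mu}$ and $\mathbf{C}_{\alpha,\beta}^{\mu}$. Two things must be checked. First, for every finite set $U$, $\mathbf{K}_{\alpha,\beta}^{\mu}[U]$ is a subset of $(\mathbf{K}_{\alpha}\times\mathbf{K}_{\beta})[U]$; this is immediate from the definition. Second, the family $U\mapsto \mathbf{K}_{\alpha,\beta}^{\mu}[U]$ is stable under transport of structure. Once both hold, $\mathbf{K}_{\alpha,\beta}^{\mu}$ inherits functoriality from $\mathbf{K}_{\alpha}\times\mathbf{K}_{\beta}$, because identities and composites of restrictions are restrictions of identities and composites.

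For stability, fix a bijection $f:U\to V$ and a pair $(s,t)\in\mathbf{K}_{\alpha,\beta}^{\mu}[U]$. The transported pair is $(\mathbf{K}_{\alpha}\times\mathbf{K}_{\beta})[f](s,t)=(\mathbf{K}_{\alpha}[f]s,\mathbf{K}_{\beta}[f]t)$, since the Cartesian product acts componentwise. The key identity I will establish is the precise conjugation formula
$$
\Aut(\mathbf{K}_{\alpha}[f]s)=f\,\Aut(s)\,f^{-1},
$$
rather than the mere abstract isomorphism $\Aut(\mathbf{K}_{\alpha}[f]s)\simeq\Aut(s)$ that the paper states for $\mathbf{E}$ and $\mathbf{C}$. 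Its proof is a one-line functoriality computation: $\mathbf{K}_{\alpha}[g]s=s$ holds if and only if $\mathbf{K}_{\alpha}[fgf^{-1}]\mathbf{K}_{\alpha}[f]s=\mathbf{K}_{\alpha}[f]s$. Concretely, writing $s=\lambda G_{\alpha}$, one gets $\Aut(s)=\lambda G_{\alpha}\lambda^{-1}$ and $\mathbf{K}_{\alpha}[f]s=(f\lambda)G_{\alpha}$, whose automorphism group is $f\lambda G_{\alpha}\lambda^{-1}f^{-1}$. The same formula holds for $t$ and $\mathbf{K}_{\beta}$.

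Because conjugation by $f$ is a bijection $S_U\to S_V$, it commutes with intersections. Hence
$$
\Aut(\mathbf{K}_{\alpha}[f]s)\cap\Aut(\mathbf{K}_{\beta}[f]t)=f\bigl(\Aut(s)\cap\Aut(t)\bigr)f^{-1}.
$$
This group is conjugate to, and in particular isomorphic to, $\Aut(s)\cap\Aut(t)$, and therefore has the same type $G_\mu$. So the transported pair lies in $\mathbf{K}_{\alpha,\beta}^{\mu}[V]$. Applying the same argument to $f^{-1}$ shows that $\mathbf{K}_{\alpha,\beta}^{\mu}[f]$ is in fact a bijection $\mathbf{K}_{\alpha,\beta}^{\mu}[U]\to\mathbf{K}_{\alpha,\beta}^{\mu}[V]$.

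The main subtlety is the meaning of the symbol $\simeq G_\mu$ in the definition of $\mathbf{K}_{\alpha,\beta}^{\mu}$. Read as abstract group isomorphism it is too weak. For example, $G_{22}$ and $G_{4}$ are not isomorphic, but $G_{4}=\langle(1,2,3,4)\rangle$ is isomorphic to $\langle(1,2,3,4)(5,6)\rangle$, which is not of the form $G_\mu$. So abstract isomorphism would not single out $\mu$. I will therefore interpret $\simeq G_\mu$ as \emph{conjugate in the symmetric group}, via an identification $U\cong[n]$, to $G_\mu$. This is well defined independently of the chosen identification, and it is the notion used in the decomposition proposition above and in the lemma on $G_\alpha\cap G_\beta$. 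With this reading, the conjugation formula above preserves the condition exactly, since conjugacy to $G_\mu$ is invariant under conjugation by $f$. This completes the argument.
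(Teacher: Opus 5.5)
Your proof is correct and follows the same route as the paper: you check that $\mathbf{K}^{\mu}_{\alpha,\beta}[U]\subseteq(\mathbf{K}_\alpha\times\mathbf{K}_\beta)[U]$, and that the defining condition is stable under transport along a bijection $f:U\to V$. Your exact formula $\Aut(\mathbf{K}_{\alpha}[f]s)=f\,\Aut(s)\,f^{-1}$ is what actually makes the step work, and it fills a gap in the paper's argument. The paper only records the separate abstract isomorphisms $\Aut(\mathbf{K}_\alpha[f]s)\simeq\Aut(s)$ and $\Aut(\mathbf{K}_\beta[f]t)\simeq\Aut(t)$, which by themselves say nothing about the intersection. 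Conjugation by the same $f$, by contrast, carries $\Aut(s)\cap\Aut(t)$ onto the new intersection. Note that this stability argument succeeds whether $\simeq G_\mu$ is read as abstract isomorphism or as conjugacy, so your interpretive remark matters for the subsequent counting of $j^{\mu}_{\alpha,\beta}$ rather than for this proposition.
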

\begin{proof}
	We verify stability under transport of structure. Let $f:U\to V$ be a bijection. For any $(s,t)\in \mathbf{K}_{\alpha,\beta}^{\mu}[U]$, the automorphism groups are preserved:
	$$
	\Aut(\mathbf{K}_{\alpha}[f]s) \simeq \Aut(s)\quad \mbox{ and }\quad  \Aut(\mathbf{K}_{\beta}[f]t)\simeq \Aut(t).
	$$
	It is clear that
	 \begin{itemize}
	 \item $(\mathbf{K}_{\alpha}[f]s,\mathbf{K}_{\beta}[f]t) \in (\mathbf{K}_{\alpha}\times \mathbf{K}_{\beta})[V]$ and,
	 \item $\Aut(\mathbf{K}_{\alpha}[f]s)\cap \Aut(\mathbf{K}_{\beta}[f]t) \simeq G_{\mu}$.
	 \end{itemize}
	Thus, $(\mathbf{K}_{\alpha}[f]s,\mathbf{K}_{\beta}[f]t)\in \mathbf{K}_{\alpha,\beta}^{\mu}[V]$, confirming that $\mathbf{K}_{\alpha,\beta}^{\mu}$ is a subspecies of $\mathbf{K}_{\alpha}\times \mathbf{K}_{\beta}$.
\end{proof}
Applying Mackey's theorem for $X = \mathbf{K}_{\alpha}[\underline{n}]$, $Y=\mathbf{K}_{\beta}[\underline{n}]$, and $G=S_{\underline{n}}$, the automorphism groups are $\Aut(x) \simeq G_\alpha$ and $\Aut(y) \simeq G_\beta$. hence, we get the following lemma.
\begin{lem}\label{lemcycsecond}
	We have the bijection:
	$$
	S_n\backslash\!\backslash (\mathbf{K}_{\alpha}[\underline{n}]\times \mathbf{K}_{\beta}[\underline{n}]) \to G_\alpha \backslash S_n \slash G_\beta.
	$$
\end{lem}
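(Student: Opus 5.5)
The bijection will come from Mackey's theorem (Lemma \ref{thmackey}), applied to $G=S_n$ acting on $X=\mathbf{K}_{\alpha}[\underline{n}]$ and $Y=\mathbf{K}_{\beta}[\underline{n}]$. The proof is the same as for Lemma \ref{lemcycfirst}. The work is in checking the hypotheses and identifying the stabilizers.

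\textbf{Step 1: the actions are transitive.} By definition, $\mathbf{K}_{\alpha}[\underline{n}]=\{\lambda G_\alpha \mid \lambda\in S_n\}$, and $S_n$ acts by transport of structure: $\sigma\cdot\lambda G_\alpha=(\sigma\lambda)G_\alpha$. This is the left-multiplication action of $S_n$ on the coset space $S_n/G_\alpha$, which is transitive (this is the Remark that $X^n/H$ has a single isomorphism type). The same holds for $\mathbf{K}_{\beta}[\underline{n}]$.

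\textbf{Step 2: the stabilizers.} Choose the base points $x=\Id\, G_\alpha$ and $y=\Id\, G_\beta$. Then $\sigma\cdot x=x$ if and only if $\sigma\in G_\alpha$, so $\Aut(x)=G_\alpha$ exactly, not merely up to isomorphism. Likewise $\Aut(y)=G_\beta$. For a general point $\lambda G_\alpha$ the stabilizer is the conjugate $\lambda G_\alpha\lambda^{-1}$. Working with the base points therefore gives the double coset space $G_\alpha\backslash S_n/G_\beta$ literally, rather than one taken up to conjugation.

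\textbf{Step 3: apply Mackey.} Lemma \ref{thmackey} gives the bijection
\[
S_n\backslash\!\backslash(\mathbf{K}_{\alpha}[\underline{n}]\times\mathbf{K}_{\beta}[\underline{n}])\to G_\alpha\backslash S_n/G_\beta,\qquad S_n(x,gy)\mapsto G_\alpha g G_\beta .
\]
Every orbit contains a pair of the form $(x,gy)$ by transitivity on the first factor. If one wants to avoid citing the lemma as a black box, the verification is direct. If $(x,gy)$ and $(x,g'y)$ lie in the same orbit, then some $\sigma$ satisfies $\sigma x=x$ and $\sigma g y=g'y$, so $\sigma\in G_\alpha$ and $g'^{-1}\sigma g\in G_\beta$; hence $G_\alpha g'G_\beta=G_\alpha gG_\beta$. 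The converse runs the same computation backwards, which proves well-definedness and injectivity together. Surjectivity is immediate: the double coset $G_\alpha gG_\beta$ is the image of the orbit of $(x,gy)$.

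\textbf{Main obstacle.} The only subtle point is the bookkeeping in Step 2. The stabilizers must be the actual subgroups $G_\alpha$ and $G_\beta$, not just isomorphic copies, because the double coset space depends on the embedding in $S_n$. Choosing the base points $\Id\,G_\alpha$ and $\Id\,G_\beta$ settles this. As a consistency check, the double cosets should also match the unlabeled structures of $\mathbf{K}_\alpha\times\mathbf{K}_\beta$. Proposition \ref{propprod} decomposes this product as a sum of molecular species indexed by the same double cosets, and each molecular summand has exactly one isomorphism type.
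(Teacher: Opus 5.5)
Your proposal is correct and follows the paper's route: the paper's entire argument is to apply Mackey's theorem (Lemma \ref{thmackey}) to $S_n$ acting on $\mathbf{K}_{\alpha}[\underline{n}]$ and $\mathbf{K}_{\beta}[\underline{n}]$, with stabilizers $G_\alpha$ and $G_\beta$. Choosing the base points $\Id\,G_\alpha$ and $\Id\,G_\beta$ makes the stabilizers equal to $G_\alpha$ and $G_\beta$, where the paper only says ``$\simeq$''; this, together with your direct check of the orbit/double-coset correspondence, is a welcome tightening of that one-line argument rather than a different approach.
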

\begin{prop}
	The coefficient $j_{\alpha,\beta}^{\mu}$ is equal to the number of unlabeled structures or isomorphism types of $\mathbf{K}_{\alpha,\beta}^{\mu}$.
\end{prop}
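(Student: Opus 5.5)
The plan mirrors the proof of the analogous statement for $b_{\alpha,\beta}^{\mu}$. First, by the preceding proposition, $j_{\alpha,\beta}^{\mu}$ is the number of double cosets $G_\alpha\pi G_\beta\in G_\alpha\backslash S_n\slash G_\beta$ such that $G_\alpha\cap \pi G_\beta\pi^{-1}$ is conjugate to $G_\mu$. I will show that these double cosets correspond bijectively to the isomorphism types of $\mathbf{K}_{\alpha,\beta}^{\mu}$.

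Take $U=\underline{n}$ and fix the base structures $x_0=\Id\, G_\alpha\in \mathbf{K}_\alpha[\underline{n}]$ and $y_0=\Id\, G_\beta\in\mathbf{K}_\beta[\underline{n}]$. Then $\Aut(x_0)=G_\alpha$ and $\Aut(y_0)=G_\beta$ exactly, not merely up to isomorphism. More generally, for a coset $\lambda G$ one has $\Aut(\lambda G)=\lambda G\lambda^{-1}$, which is the same computation as the lemma for $S_\alpha$. Since $S_n$ acts transitively on $\mathbf{K}_\beta[\underline{n}]$, every orbit of pairs contains a representative of the form $(x_0,\pi y_0)$. Lemma \ref{lemcycsecond}, which is Mackey's theorem (Lemma \ref{thmackey}), identifies the orbit $S_n(x_0,\pi y_0)$ with the double coset $G_\alpha\pi G_\beta$. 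The isomorphism types of $\mathbf{K}_\alpha\times\mathbf{K}_\beta$ are exactly these orbits, so this gives a bijection between isomorphism types and double cosets.

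Next I track the intersection of automorphism groups along this bijection:
$$\Aut(x_0)\cap\Aut(\pi y_0)=G_\alpha\cap \pi G_\beta\pi^{-1}.$$
I must check that this intersection is well defined on the double coset up to conjugacy. Replacing $\pi$ by $g\pi h$ with $g\in G_\alpha$ and $h\in G_\beta$ gives
$$G_\alpha\cap g\pi G_\beta\pi^{-1}g^{-1}=g\,(G_\alpha\cap\pi G_\beta\pi^{-1})\,g^{-1}.$$
Likewise, moving a pair $(s,t)$ by $\sigma\in S_n$ conjugates $\Aut(s)\cap\Aut(t)$ by $\sigma$. So the conjugacy class of the intersection is an invariant of the orbit. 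This is the same invariance used in the subspecies proposition, which shows that $\mathbf{K}_{\alpha,\beta}^{\mu}$ is a union of full orbits.

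Consequently, the orbits contained in $\mathbf{K}_{\alpha,\beta}^{\mu}[\underline{n}]$ are exactly those whose double coset satisfies the condition that $G_\alpha\cap\pi G_\beta\pi^{-1}$ is conjugate to $G_\mu$. Their number is therefore $j_{\alpha,\beta}^{\mu}$.

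The main obstacle is the meaning of ``$\simeq G_\mu$'' in the definition of $\mathbf{K}_{\alpha,\beta}^{\mu}$. Abstract group isomorphism is too weak: for instance $G_{4}$ and $G_{22}$-type groups can coincide abstractly with other cyclic configurations. The counting only works if $\simeq$ means conjugacy inside $S_U$, that is, permutation isomorphism. I would state this reading explicitly. I would then invoke the lemma $G_\alpha\cap G_\beta=G_\mu$, applied to $G_\alpha\cap \pi G_\beta\pi^{-1}$ after noting that $\pi G_\beta\pi^{-1}$ is conjugate to $G_\beta$, together with Lemma \ref{specyc}(3). These ensure that each intersection is conjugate to exactly one $G_\mu$, so the sets indexed by different $\mu$ partition the orbits. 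Granting this, the double-coset count and the isomorphism-type count agree, which is the statement.
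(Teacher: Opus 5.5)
Your argument follows the paper's route: Mackey's bijection between $S_n$-orbits of pairs and double cosets $G_\alpha\pi G_\beta$, then reading off $\Aut(x_0)\cap\Aut(\pi y_0)=G_\alpha\cap\pi G_\beta\pi^{-1}$. It is more careful than the paper in two places. You check that the conjugacy class of the intersection is constant on orbits and double cosets. You also insist that ``$\simeq G_\mu$'' must mean conjugacy in $S_U$, which is right; a cleaner example than yours is that $G_{(2\,1^{n-2})}$ and $G_{(2^2 1^{n-4})}$ are both $C_2$ but are not conjugate. That part is sound. It proves that the number of isomorphism types of $\mathbf{K}_{\alpha,\beta}^{\mu}$ equals $N_{\alpha,\beta}^{\mu}$, the number of double cosets with $G_\alpha\cap\pi G_\beta\pi^{-1}$ conjugate to $G_\mu$.

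\textbf{The gap.} The problem is the identification $j_{\alpha,\beta}^{\mu}=N_{\alpha,\beta}^{\mu}$. You take it from the preceding proposition and support it with the lemma $G_\alpha\cap G_\beta=G_\mu$ applied to $G_\alpha\cap\pi G_\beta\pi^{-1}$, and that lemma is false.

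\emph{Counterexample.} Take $n=8$ and $\alpha=\beta=(4,2,2)$, so $G_{422}=\langle(1234)\rangle\times\langle(56)(78)\rangle$. Choose $\pi$ with $\pi G_{422}\pi^{-1}=\langle(5768)\rangle\times\langle(13)(24)\rangle$; this is possible since both groups are generated by a $4$-cycle and a disjoint double transposition. Since $(1234)^2=(13)(24)$ and $(5768)^2=(56)(78)$, the intersection is $V=\langle(13)(24),(56)(78)\rangle\cong C_2\times C_2$. The orbits of any $G_\mu$ are exactly the cycles of $\sigma_\mu$, so the only candidate is $\mu=(2^4)$. But $|G_{(2^4)}|=2$, so $V$ is conjugate to no $G_\mu$. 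The paper's proof of the lemma breaks exactly where it ``groups cyclic factors of the same cycle length'': $\langle(13)(24)\rangle\times\langle(56)(78)\rangle$ cannot be merged into a single cyclic factor.

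\emph{Consequence.} $\mathbf{K}_{422}\times\mathbf{K}_{422}$ contains the molecule $X^8/V$, which is not any $\mathbf{K}_\mu$. Its cycle index is $\frac14(p_{1^8}+2p_{2^21^4}+p_{2^4})=\mathbf{K}_{(2^21^4)}(\mathbf{z})+\frac12\mathbf{K}_{(2^4)}(\mathbf{z})-\frac12\mathbf{K}_{(1^8)}(\mathbf{z})$. Let $S$ be the sum of the cycle indices of all such non-$G_\mu$ molecules. Its $p_{1^8}$-coefficient is positive, so $S\neq 0$. Then $\sum_\mu\bigl(j_{\alpha,\beta}^{\mu}-N_{\alpha,\beta}^{\mu}\bigr)\mathbf{K}_\mu(\mathbf{z})=S$, and since the $\mathbf{K}_\mu(\mathbf{z})$ are linearly independent, $j_{\alpha,\beta}^{\mu}\neq N_{\alpha,\beta}^{\mu}$ for some $\mu$.

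So this step cannot be repaired. The statement holds exactly for those $\alpha,\beta$ for which every $G_\alpha\cap\pi G_\beta\pi^{-1}$ is conjugate to some $G_\mu$, or else if $j_{\alpha,\beta}^{\mu}$ is redefined as the double-coset count. The paper's one-line proof rests on the same false lemma, so you inherited this flaw rather than introduced it. Still, your closing appeal to that lemma is precisely the step your argument cannot supply.
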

\begin{proof}
The bijection in Lemma \ref{lemcycsecond} shows that the number of unlabelled structures of $\mathbf{K}_{\alpha}\times \mathrm{K}_{\beta}$ is equal to the cardinality of the double coset set $G_{\alpha}\backslash S_n\slash G_{\beta}$. The coefficient $j_{\alpha,\beta}^{\mu}$ counts the number of double cosets $G_{\alpha} \pi G_{\beta}$ such that the stabilizer intersection $G_{\alpha} \cap \pi G_{\beta}\pi^{-1}$ is conjugate to $G_{\mu}$. This establishes that the coefficient $j_{\alpha,\beta}^{\mu}$ is equal to the number of unlabeled structures $(s,t)$ of $\mathbf{K}_{\alpha}\times \mathbf{K}_{\beta}$ satisfying $\Aut(s)\cap \Aut(t) \simeq G_{\mu}$.
\end{proof}
\begin{cor}
	The category $\mathrm{KEsp}$ is closed under the Cartesian product (Hadamard product).
\end{cor}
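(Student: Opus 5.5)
The plan mirrors the two earlier closure corollaries, for $\mathrm{HEsp}$ and $\mathrm{CEsp}$. Take two objects of $\mathrm{KEsp}$, say $F=\sum_{\alpha\vdash n} a_{\alpha}\mathbf{K}_{\alpha}$ and $G=\sum_{\beta\vdash n} d_{\beta}\mathbf{K}_{\beta}$ with $a_\alpha,d_\beta\in\mathbb{N}$. The Cartesian (Hadamard) product of species is commutative and distributes over sums (see \cite{FGP}\cite{JOYAL19811}). Expanding bilinearly therefore gives
\[
F\times G=\sum_{\alpha,\beta\vdash n} a_{\alpha}d_{\beta}\,\mathbf{K}_{\alpha}\times\mathbf{K}_{\beta}.
\]
It then suffices to show that each $\mathbf{K}_{\alpha}\times\mathbf{K}_{\beta}$ is a non-negative integer combination of the $\mathbf{K}_{\mu}$.

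For this I will invoke the preceding decomposition proposition for $\mathbf{K}_{\alpha}\times\mathbf{K}_{\beta}$. It rests on Proposition \ref{propprod}(2), which expresses the product as the sum over double cosets $G_\alpha\tau G_\beta$ of the molecules $X^n/(G_\alpha\cap\tau G_\beta\tau^{-1})$. It also uses the lemma that such intersections are again of the form $G_\mu$. Granting these, we get
\[
\mathbf{K}_{\alpha}\times\mathbf{K}_{\beta}=\sum_{\mu\vdash n} j^{\mu}_{\alpha,\beta}\mathbf{K}_{\mu},
\]
where $j^{\mu}_{\alpha,\beta}$ is the number of double cosets whose intersection is conjugate to $G_\mu$. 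This is a cardinality of a finite set, so it lies in $\mathbb{N}$. Substituting and regrouping yields
\[
F\times G=\sum_{\mu\vdash n}\Big(\sum_{\alpha,\beta\vdash n} a_\alpha d_\beta\, j^{\mu}_{\alpha,\beta}\Big)\mathbf{K}_\mu,
\]
whose coefficients are sums of products of natural numbers. Hence $F\times G$ is an object of $\mathrm{KEsp}$. Morphisms pose no issue, since $\times$ is a functor on species and natural transformations are preserved.

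The only delicate point is the one inherited from the intersection lemma. The grouping step requires that each conjugate intersection $G_\alpha\cap\tau G_\beta\tau^{-1}$ be conjugate to some $G_\mu$, not merely a product of cyclic groups acting on blocks. A product of cyclic groups acting on disjoint blocks need not be of this shape. For example, $\langle(12)(34)\rangle$ is of type $G_{2^2}$, whereas $\langle(12)\rangle\times\langle(34)\rangle$ is not of type $G_\mu$ for any $\mu$. In the latter case one must check that all factors with the same cycle length, and only those, are generated jointly by a single product of cycles.

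My plan is to take the decomposition proposition as already established, as the statement permits, and conclude directly. If one instead wanted to double-check that step, I would first compute the $n=4$ cases $\alpha=\beta=(2,2)$ and $\alpha=\beta=(2,1,1)$. These test whether an intersection such as $\langle(12)\rangle\times\langle(34)\rangle$ can actually arise and would force a refinement of the statement.
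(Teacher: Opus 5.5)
Your argument is the paper's argument: expand bilinearly, invoke the decomposition of $\mathbf{K}_\alpha\times\mathbf{K}_\beta$, regroup. You correctly located the weak point and then chose to grant it. That step is not merely delicate; it is false, and the corollary fails with it.

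Take $n=8$ and $\alpha=\beta=(4,2,2)$, so $G_{422}=\langle(1234)\rangle\times\langle(56)(78)\rangle$, and let $\tau=(15)(27)(36)(48)$. Then $\tau G_{422}\tau^{-1}=\langle(5768)\rangle\times\langle(13)(24)\rangle$. Both groups are direct products of their restrictions to $\{1,2,3,4\}$ and $\{5,6,7,8\}$, and $(5768)^2=(56)(78)$, so
\[
G_{422}\cap\tau G_{422}\tau^{-1}=\{e,\,(13)(24),\,(56)(78),\,(13)(24)(56)(78)\}=:V\cong\mathbb{Z}_2\times\mathbb{Z}_2 .
\]
Now $|G_\mu|$ is the product of the distinct parts of $\mu$ that are at least $2$. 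Hence every $G_\mu$ of order $4$ is generated by a product of $4$-cycles and is cyclic, so $V$ is conjugate to no $G_\mu$.

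The pair $(G_{422},\tau G_{422})\in(\mathbf{K}_{422}\times\mathbf{K}_{422})[8]$ has automorphism group $V$. Every structure of any $\sum_\mu c_\mu\mathbf{K}_\mu$ has automorphism group conjugate to some $G_\mu$, and species isomorphisms preserve automorphism groups. Therefore $\mathbf{K}_{422}\times\mathbf{K}_{422}$ is not an object of $\mathrm{KEsp}$. The same happens for $\alpha=\beta=(6,2)$ with $\tau G_{62}\tau^{-1}=\langle(153426)\rangle\times\langle(78)\rangle$, which gives the Klein group $\langle(14)(25)(36),(78)\rangle$. This is exactly the failure of the grouping step in the paper's intersection lemma: the two factors $\langle(13)(24)\rangle$ and $\langle(56)(78)\rangle$ both have cycle length $2$, but they cannot be merged into one cyclic factor.

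Your proposed sanity checks would not have caught this. For $n\le 5$ every $G_\mu$ is cyclic and all its subgroups are again of the form $G_\nu$. Your model obstruction $\langle(12)\rangle\times\langle(34)\rangle$ can never occur, because each $G_\alpha$ contains at most one transposition. The actual obstruction pairs the square of a $4$-cycle factor with a $2^2$-factor.

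So no argument can prove the species-level closure as stated. At most a weaker statement about cycle indices might survive, and it would need a different proof. For instance,
\[
Z_{X^8/V}=\tfrac14(p_1^8+2p_1^4p_2^2+p_2^4)=-\tfrac12\mathbf{K}_{1^8}(\mathbf{z})+\mathbf{K}_{2^21^4}(\mathbf{z})+\tfrac12\mathbf{K}_{2^4}(\mathbf{z}),
\]
so the coefficients $j^\mu_{\alpha,\beta}$ are no longer counts of double cosets whose intersection is conjugate to $G_\mu$.
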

\begin{proof}
	As the Cartesian product of species is commutative and distributive (see \cite{FGP}), we have
	\begin{align*}
		\sum_{\mu\vdash n} a_{\mu} \mathbf{K}_{\mu} \times \sum_{\mu\vdash n} b_{\mu} \mathbf{K}_{\mu} &= \sum_{\alpha,\beta\vdash n} (a_{\alpha} b_{\beta}) \mathbf{K}_{\alpha}\times \mathbf{K}_{\beta}\\
		&=\sum_{\alpha,\beta\vdash n} (a_{\alpha} b_{\beta})\sum_{\mu\vdash n}j_{\alpha,\beta}^{\mu} \mathbf{K}_{\mu}\\
		&=\sum_{\mu\vdash n}\sum_{\alpha,\beta\vdash n} (a_{\alpha} b_{\beta})j_{\alpha,\beta}^{\mu} \mathbf{K}_{\mu}.
	\end{align*}
\end{proof}
\begin{ex}
The following examples illustrate the decomposition for $n=6$:
	$$
	\mathbf{K}_{222}(\mathbf{z})\star \mathbf{K}_{6}(\mathbf{z}) = 56 \, \mathbf{K}_{111111}(\mathbf{z}) + 8 \, \mathbf{K}_{222}(\mathbf{z}),
	$$
	$$
	\mathbf{K}_{42}(\mathbf{z})\star \mathbf{K}_{6} = 14 \, \mathbf{K}_{111111}(\mathbf{z}) + 2 \, \mathbf{K}_{222}(\mathbf{z}).
	$$
\end{ex}
As in the previous case, to the best of our knowledge, a systematic enumeration of the double cosets $G_\alpha \backslash S_n\slash G_\beta$ in terms of matrix-like combinatorial objects remains an open problem.

\begin{prob}
	Find a direct combinatorial interpretation of the coefficients $j_{\alpha,\beta}^{\mu}$.
\end{prob}
Here are some computed examples for $n=6$:
\begin{itemize}
	\item $\mathbf{K}_{6}(\mathbf{z}) \star \mathbf{K}_{6}(\mathbf{z}) = 18 \, \mathbf{K}_{111111}(\mathbf{z}) + 2 \, \mathbf{K}_{222}(\mathbf{z}) + 2 \, \mathbf{K}_{33}(\mathbf{z}) + 2 \, \mathbf{K}_{6}(\mathbf{z})$
	\item $\mathbf{K}_{6}(\mathbf{z}) \star \mathbf{K}_{42}(\mathbf{z}) = 14 \, \mathbf{K}_{111111} (\mathbf{z})+ 2 \, \mathbf{K}_{222}(\mathbf{z})$
	\item $\mathbf{K}_{51}(\mathbf{z}) \star \mathbf{K}_{51}(\mathbf{z}) = 28 \, \mathbf{K}_{111111}(\mathbf{z}) + 4 \, \mathbf{K}_{51}(\mathbf{z})$
	\item $\mathbf{K}_{42}(\mathbf{z}) \star \mathbf{K}_{42}(\mathbf{z}) = 10 \, \mathbf{K}_{111111}(\mathbf{z}) + \mathbf{K}_{21111}(\mathbf{z}) + \mathbf{K}_{222}(\mathbf{z}) + 2 \, \mathbf{K}_{42}(\mathbf{z})$
	\item $\mathbf{K}_{42}(\mathbf{z}) \star \mathbf{K}_{411}(\mathbf{z}) = 22 \, \mathbf{K}_{111111}(\mathbf{z}) + 2 \, \mathbf{K}_{411}(\mathbf{z})$
	\item $\mathbf{K}_{33}(\mathbf{z}) \star \mathbf{K}_{33}(\mathbf{z}) = 76 \, \mathbf{K}_{111111}(\mathbf{z}) + 12 \, \mathbf{K}_{33}(\mathbf{z})$
	\item $\mathbf{K}_{321}(\mathbf{z}) \star \mathbf{K}_{42}(\mathbf{z}) = 14 \, \mathbf{K}_{111111}(\mathbf{z}) + 2 \, \mathbf{K}_{21111}(\mathbf{z})$
	\item $\mathbf{K}_{321}(\mathbf{z}) \star \mathbf{K}_{321}(\mathbf{z}) = 18 \, \mathbf{K}_{111111}(\mathbf{z}) + 2 \, \mathbf{K}_{21111}(\mathbf{z}) + 2 \, \mathbf{K}_{31^3}(\mathbf{z}) + 2 \, \mathbf{K}_{321}(\mathbf{z})$
	\item $\mathbf{K}_{222}(\mathbf{z}) \star \mathbf{K}_{222}(\mathbf{z}) = 168 \, \mathbf{K}_{111111}(\mathbf{z}) + 24 \, \mathbf{K}_{222}(\mathbf{z})$
	\item $\mathbf{K}_{21111}(\mathbf{z}) \star \mathbf{K}_{42}(\mathbf{z}) = 42 \, \mathbf{K}_{111111}(\mathbf{z}) + 6 \, \mathbf{K}_{21111}(\mathbf{z})$
	\item $\mathbf{K}_{111111}(\mathbf{z}) \star \mathbf{K}_{111111}(\mathbf{z}) = 720 \, \mathbf{K}_{111111}(\mathbf{z})$.
\end{itemize}
\section{Combinatorial Interpretation of the Coefficient \texorpdfstring{$b_{n,n}^{\mu}$}{b\_\{n,n\}\^mu}}

In this section, we provide a combinatorial interpretation of the structure 
constants $b_{n,n}^{\mu}$ arising in the Kronecker product $\mathbf{C}_n \star 
\mathbf{C}_n$, in terms of the Steggall patterns introduced in~\cite{STE}.

\subsection{Steggall Patterns}

\begin{defin}[Element and Pattern]
An \textbf{element} of size $n$ is a permutation $(P_1, P_2, \dots, P_n)$ of 
$\{1, 2, \dots, n\}$, representing $n$ marked cells on an $n \times n$ grid 
such that no two cells share the same row or column.

A \textbf{Steggall pattern} is the equivalence class of an element under the 
action of the group $\mathbb{Z}_n \times \mathbb{Z}_n$, generated by the 
following two operations:
\begin{enumerate}
    \item \textbf{Cyclic shift of indices} (horizontal translation on the 
    torus):
    \[
        (P_1, P_2, \dots, P_n) \sim (P_2, P_3, \dots, P_n, P_1).
    \]
    \item \textbf{Translation of values modulo $n$} (vertical translation on 
    the torus):
    \[
        (P_i) \sim (P_i + k \!\!\!\pmod{n}), \qquad k \in \mathbb{Z}.
    \]
\end{enumerate}
We denote by $\mathbf{SP}_n$ the set of all Steggall patterns of size $n$.
\end{defin}

\begin{rem}
Geometrically, a Steggall pattern is an orbit of the natural action of 
$\mathbb{Z}_n \times \mathbb{Z}_n$ on the set of permutations of $[n]$, 
corresponding to toroidal translations of the marked cells on the grid. Two 
elements belong to the same pattern if and only if one can be obtained from the 
other by simultaneously sliding all marked cells cyclically along rows and 
columns.
\end{rem}

\begin{ex}
For $n = 4$, Steggall showed that there are exactly three distinct patterns, 
whose canonical representatives are $(1,2,3,4)$, $(1,4,3,2)$, and $(1,2,4,3)$.
\begin{center}
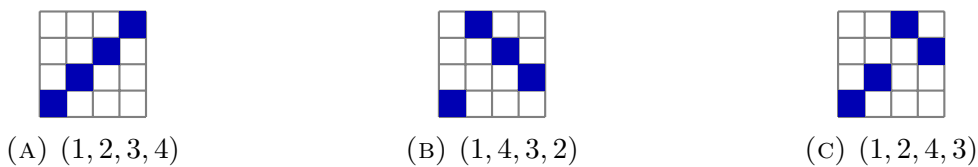
\begin{figure}[h!]
    \centering
    \begin{subfigure}[b]{0.3\textwidth}
        \centering
        \begin{tikzpicture}[scale=0.35]
            \draw[thick, gray] (0,0) grid (4,4);
            \fill[blue!70!black] (0,0) rectangle (1,1);
            \fill[blue!70!black] (1,1) rectangle (2,2);
            \fill[blue!70!black] (2,2) rectangle (3,3);
            \fill[blue!70!black] (3,3) rectangle (4,4);
        \end{tikzpicture}
        \caption{$(1, 2, 3, 4)$}
    \end{subfigure}
    \hfill
    \begin{subfigure}[b]{0.3\textwidth}
        \centering
        \begin{tikzpicture}[scale=0.35]
            \draw[thick, gray] (0,0) grid (4,4);
            \fill[blue!70!black] (0,0) rectangle (1,1);
            \fill[blue!70!black] (1,3) rectangle (2,4);
            \fill[blue!70!black] (2,2) rectangle (3,3);
            \fill[blue!70!black] (3,1) rectangle (4,2);
        \end{tikzpicture}
        \caption{$(1, 4, 3, 2)$}
    \end{subfigure}
    \hfill
    \begin{subfigure}[b]{0.3\textwidth}
        \centering
        \begin{tikzpicture}[scale=0.35]
            \draw[thick, gray] (0,0) grid (4,4);
            \fill[blue!70!black] (0,0) rectangle (1,1);
            \fill[blue!70!black] (1,1) rectangle (2,2);
            \fill[blue!70!black] (2,3) rectangle (3,4);
            \fill[blue!70!black] (3,2) rectangle (4,3);
        \end{tikzpicture}
        \caption{$(1, 2, 4, 3)$}
    \end{subfigure}
    \caption{The three Steggall patterns for $n = 4$. The first two are 
    affine patterns with stabiliser of size $d = 4$; the third is a generic 
    pattern with trivial stabiliser $d = 1$.}
    \label{fig:steggall_n4}
\end{figure}
\end{center}
\end{ex}

A pattern is the set of configurations obtained by “sliding” these blue squares cyclically on the grid (torus).

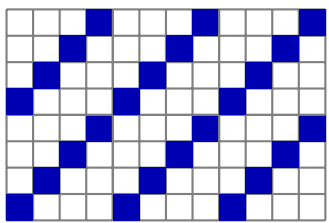
\begin{figure}[h]
\centering
\begin{tikzpicture}[scale=0.35]

\foreach \i in {0,1} {
\foreach \j in {0,1,2} {
\begin{scope}[shift={(4*\j,4*\i)}]
\draw[thick, gray] (0,0) grid (4,4);

        \fill[blue!70!black] (0,0) rectangle (1,1);
        \fill[blue!70!black] (1,1) rectangle (2,2);
        \fill[blue!70!black] (2,2) rectangle (3,3);
        \fill[blue!70!black] (3,3) rectangle (4,4);
    \end{scope}
}

}

\end{tikzpicture}
\caption{Pattern (1, 2, 3, 4)}
\end{figure}

\begin{figure}[h]
\centering
\begin{tikzpicture}[scale=0.35]

\foreach \i in {0,1} {
\foreach \j in {0,1,2} {
\begin{scope}[shift={(4*\j,4*\i)}]
\draw[thick, gray] (0,0) grid (4,4);

        \fill[blue!70!black] (0,0) rectangle (1,1);
        \fill[blue!70!black] (1,3) rectangle (2,4);
        \fill[blue!70!black] (2,2) rectangle (3,3);
        \fill[blue!70!black] (3,1) rectangle (4,2);
    \end{scope}
}

}

\end{tikzpicture}
\caption{Pattern (1, 4, 3, 2)}
\end{figure}

\begin{figure}[h]
\centering
\begin{tikzpicture}[scale=0.35]

\foreach \i in {0,1} {
\foreach \j in {0,1,2} {
\begin{scope}[shift={(4*\j,4*\i)}]
\draw[thick, gray] (0,0) grid (4,4);

        \fill[blue!70!black] (0,0) rectangle (1,1);
        \fill[blue!70!black] (1,1) rectangle (2,2);
        \fill[blue!70!black] (2,3) rectangle (3,4);
        \fill[blue!70!black] (3,2) rectangle (4,3);
    \end{scope}
}

}

\end{tikzpicture}
\caption{Pattern (1, 2, 4, 3)}
\end{figure}
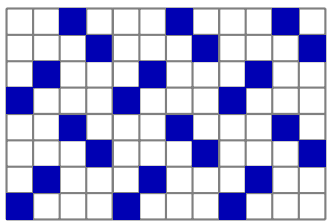

\subsection{The Stabiliser of a Pattern}

For a pattern $P \in \mathbf{SP}_n$, its \textbf{stabiliser} is the subgroup 
of $\mathbb{Z}_n \times \mathbb{Z}_n$ fixing $P$:
\[
    \mathrm{Stab}(P) 
    := \{(r, k) \in \mathbb{Z}_n \times \mathbb{Z}_n \mid 
        (P_{i+r} + k \!\!\!\pmod{n})_{i} = (P_i)_i\}.
\]
By the orbit-stabiliser theorem, the orbit of any element representing $P$ has 
size $n^2 / |\mathrm{Stab}(P)|$. In particular, $|\mathrm{Stab}(P)|$ is always 
a divisor of $n^2$.

\begin{rem}
One can show that if $|\mathrm{Stab}(P)| = d$, then $d \mid n$ and 
$\mathrm{Stab}(P)$ is cyclic of order $d$, generated by the pair $(n/d,\, 
n/d \cdot a)$ for some $a$ coprime to $d$. In particular, the possible 
stabiliser sizes are exactly the divisors of $n$.
\end{rem}

\subsection{Cameron's Identification and the Main Result}

In his work on homogeneous permutations~\cite{Cameron_2002}, Cameron established 
that the number of isomorphism types of the species $\mathbf{C}_n \times 
\mathbf{C}_n$ coincides with the number of Steggall patterns $|\mathbf{SP}_n|$, 
and that both are counted by sequence~\seqnum{A002619} in the OEIS:
\[
    1,\; 1,\; 2,\; 3,\; 8,\; 24,\; 108,\; 640,\; 4492,\; 36336,\; 
    329900,\; 3326788,\; 36846288,\; 444790512,\;\ldots
\]
More precisely, he showed that the action of $\mathbb{Z}_n \times \mathbb{Z}_n$ 
on $\mathfrak{S}_n$ by simultaneous cyclic shift of indices and translation of 
values is equivalent to the action defining the Steggall equivalence, yielding a 
canonical bijection:
\begin{equation}
\label{eq:cameron_bij}
    \mathbb{Z}_n \backslash \mathfrak{S}_n / \mathbb{Z}_n 
    \;\xrightarrow{\;\sim\;}\; \mathbf{SP}_n, 
    \qquad \mathbb{Z}_n \pi \mathbb{Z}_n \mapsto P_\pi.
\end{equation}

We can now state and prove the main result of this section, which gives an 
explicit combinatorial interpretation of the Kronecker structure constants 
$b_{n,n}^{\mu}$ in terms of Steggall patterns classified by their stabiliser 
size.

\begin{prop}
\label{thm:steggall_interp}
Let $n$ be a positive integer. The following identity holds in the category 
$\mathrm{CEsp}$:
\[
    \mathbf{C}_n \times \mathbf{C}_n 
    = \sum_{d \mid n} 
      \#\bigl\{P \in \mathbf{SP}_n \;\big|\; |\mathrm{Stab}(P)| = d\bigr\} 
      \cdot \mathbf{C}_{(d^{n/d})}.
\]
Equivalently, the Kronecker structure constants of the basis $\{C_\alpha(z)\}$ 
satisfy:
\[
    b_{n,n}^{\mu} = 
    \begin{cases} 
        \#\bigl\{P \in \mathbf{SP}_n \;\big|\; |\mathrm{Stab}(P)| = d\bigr\} 
        & \text{if } \mu = (d^{n/d}) \text{ for some } d \mid n, \\[4pt]
        0 
        & \text{otherwise.}
    \end{cases}
\]
\end{prop}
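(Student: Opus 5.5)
We will start from Proposition \ref{prop_C} with $\alpha=\beta=(n)$, and take $\sigma=\sigma_n=(1,2,\ldots,n)$ and $C=\langle\sigma\rangle\cong\mathbb{Z}_n$. By Proposition \ref{propprod},
\[
\mathbf{C}_n\times\mathbf{C}_n=\sum_{C\pi C\in C\backslash S_n/C} X^n/\bigl(C\cap \pi C\pi^{-1}\bigr).
\]
The first step is to identify which subgroups $C\cap\pi C\pi^{-1}$ can occur. It is a subgroup of the cyclic group $C$, so it equals $\langle\sigma^{n/d}\rangle$ for a unique divisor $d\mid n$. Since $\sigma^{n/d}$ has cycle type $(d^{n/d})$, this subgroup is $\langle\sigma_{(d^{n/d})}\rangle$ up to conjugacy, and the corresponding summand is $\mathbf{C}_{(d^{n/d})}$. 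Hence only shapes $\mu=(d^{n/d})$ occur, which gives the ``otherwise $0$'' case. The coefficient $b_{n,n}^{(d^{n/d})}$ is the number of double cosets with $|C\cap\pi C\pi^{-1}|=d$. Distinct divisors give non-conjugate subgroups, by Lemma \ref{specyc}, so no two divisors are merged.

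The second step relates $|C\cap\pi C\pi^{-1}|$ to stabilisers. Let $C\times C$ act on $S_n$ by $(\sigma^a,\sigma^b)\cdot\pi=\sigma^a\pi\sigma^{-b}$. Under this action the orbits are the double cosets, and the stabiliser of $\pi$ is $\{(\sigma^a,\sigma^b):\sigma^a=\pi\sigma^b\pi^{-1}\}$. Its projection to the first factor is $C\cap\pi C\pi^{-1}$, and this projection is injective because $\sigma^a=1$ forces $\sigma^b=1$. Thus $|\mathrm{Stab}_{C\times C}(\pi)|=|C\cap\pi C\pi^{-1}|$, and this quantity is constant on each double coset.

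The third step transports everything to Steggall patterns. Write $\pi$ in one-line notation $(P_1,\ldots,P_n)$. Right multiplication by $\sigma^{-b}$ cyclically shifts the indices, and left multiplication by $\sigma^a$ translates the values modulo $n$. So the $C\times C$-action above is precisely the $\mathbb{Z}_n\times\mathbb{Z}_n$-action defining $\mathbf{SP}_n$; this is Cameron's bijection \eqref{eq:cameron_bij}, which we will make explicit. Under the isomorphism $(\sigma^a,\sigma^b)\mapsto(r,k)$, the point stabilisers correspond, so $|\mathrm{Stab}(P_\pi)|=|C\cap\pi C\pi^{-1}|$. Counting patterns by stabiliser size $d$ then gives both forms of the statement.

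The main obstacle is bookkeeping rather than ideas. We must fix the conventions (left versus right, and the signs of the shift and of the translation) so that the dictionary between the two actions is a genuine group isomorphism. We must also make sure that ``stabiliser of $P$'' means the stabiliser of a representative element, which is well defined up to conjugacy and hence in size, since the group is abelian.
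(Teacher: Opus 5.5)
Your proof is correct and has the same outline as the paper's. Subgroups of $C\cong\mathbb{Z}_n$ force $\mu=(d^{n/d})$, the double cosets $C\backslash S_n/C$ are identified with Steggall patterns through the two-sided action, and $b_{n,n}^{(d^{n/d})}$ is read off from stabiliser sizes.

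You genuinely differ from the paper at the key step, which compares $|\mathrm{Stab}(P_\pi)|$ with $|C\cap\pi C\pi^{-1}|$. The paper asserts $\mathbb{Z}_n\cap\pi\mathbb{Z}_n\pi^{-1}=\mathrm{Stab}(P_\pi)\cap(\mathbb{Z}_n\times\{0\})$. The right-hand side is always trivial, because a pure cyclic shift of indices, or a pure translation of values, fixes no permutation unless it is the identity. So that identity already fails for $\pi=\mathrm{id}$. Your observation is the correct one: $\mathrm{Stab}_{C\times C}(\pi)$ is the graph of the conjugation isomorphism $g\mapsto\pi^{-1}g\pi$ from $C\cap\pi C\pi^{-1}$ onto $C\cap\pi^{-1}C\pi$. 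Equivalently, its projection to the first factor is injective with image exactly $C\cap\pi C\pi^{-1}$. This is what actually justifies the paper's Step 2.

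You also write $(\sigma^a\pi\sigma^{-b})(i)=\pi(i-b)+a$, with values read modulo $n$. This shows that $(\sigma^a,\sigma^b)\mapsto(r,k)=(-b,a)$ is a group isomorphism intertwining the two actions. So you prove Cameron's bijection rather than citing it, and this explicit dictionary settles the sign bookkeeping you were worried about. One small simplification: since $\mathbb{Z}_n\times\mathbb{Z}_n$ is abelian, the stabiliser is literally the same subgroup for every representative of a pattern, not merely the same up to conjugacy.
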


\begin{proof}
By Proposition~\ref{prop_C}, the Kronecker product $\mathbf{C}_n 
\times \mathbf{C}_n$ decomposes as:
\begin{equation}
\label{eq:CnCn_decomp}
    \mathbf{C}_n \times \mathbf{C}_n 
    = \sum_{\mu \vdash n} 
      \#\bigl\{\mathbb{Z}_n \pi \mathbb{Z}_n \in \mathbb{Z}_n \backslash 
      \mathfrak{S}_n / \mathbb{Z}_n 
      \;\big|\; 
      \mathbb{Z}_n \cap \pi \mathbb{Z}_n \pi^{-1} 
      \simeq \langle \sigma_\mu \rangle\bigr\} 
      \cdot \mathbf{C}_{\mu}.
\end{equation}

\medskip
\noindent\textit{Step 1: Restriction of admissible $\mu$.}

The condition $\mathbb{Z}_n \cap \pi \mathbb{Z}_n \pi^{-1} = \langle 
\sigma_\mu \rangle$ requires that $\sigma_\mu \in \mathbb{Z}_n$. Since 
$\mathbb{Z}_n = \langle \sigma_{(n)} \rangle$ is the cyclic group of order $n$ 
generated by the standard $n$-cycle $\sigma_{(n)}$, its subgroups are precisely 
$\langle \sigma_{(n)}^{n/d} \rangle$ for $d \mid n$, each cyclic of order $d$ 
with cycle type $(d^{n/d})$. Therefore $\mu$ must be of the form $(d^{n/d})$ 
for some divisor $d$ of $n$, and all other structure constants vanish.

\medskip
\noindent\textit{Step 2: Identification via Cameron's bijection.}

By Cameron's bijection~\eqref{eq:cameron_bij}, the double coset $\mathbb{Z}_n 
\pi \mathbb{Z}_n$ corresponds to a unique pattern $P_\pi \in \mathbf{SP}_n$. 
Under this correspondence, the stabiliser of the double coset within 
$\mathbb{Z}_n$ is:
\[
    \mathbb{Z}_n \cap \pi \mathbb{Z}_n \pi^{-1} 
    = \mathrm{Stab}_{\mathbb{Z}_n \times \mathbb{Z}_n}(P_\pi) \cap 
      (\mathbb{Z}_n \times \{0\}),
\]
which has order $d$ if and only if $|\mathrm{Stab}(P_\pi)| = d$. Therefore:
\[
    \#\bigl\{\mathbb{Z}_n \pi \mathbb{Z}_n 
    \;\big|\; 
    \mathbb{Z}_n \cap \pi \mathbb{Z}_n \pi^{-1} \simeq 
    \langle \sigma_{(d^{n/d})} \rangle\bigr\} 
    = \#\bigl\{P \in \mathbf{SP}_n \;\big|\; 
      |\mathrm{Stab}(P)| = d\bigr\}.
\]

\medskip
\noindent\textit{Step 3: Conclusion.}

Since every subgroup of $\mathbb{Z}_n$ is uniquely determined by its order, 
isomorphism and equality of subgroups of $\mathbb{Z}_n$ are equivalent. 
Substituting into~\eqref{eq:CnCn_decomp} and summing only over $d \mid n$ 
yields the result.
\end{proof}

\begin{cor}
The total number of Steggall patterns of size $n$ satisfies:
\[
    |\mathbf{SP}_n| 
    = \sum_{d \mid n} b_{n,n}^{(d^{n/d})} 
    = \frac{1}{n} \sum_{d \mid n} \varphi(d) \cdot \mathrm{Sq}(C_{[d^{n/d}]}),
\]
where the second equality follows from Burnside's lemma applied to the action 
of $\mathbb{Z}_n$ on $\mathfrak{S}_n / \mathbb{Z}_n$.
\end{cor}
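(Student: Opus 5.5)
The plan is to prove the two equalities separately. Both rest on the identification of $\mathbf{SP}_n$ with the double coset set $\mathbb{Z}_n\backslash\mathfrak{S}_n/\mathbb{Z}_n$ given by Cameron's bijection~\eqref{eq:cameron_bij}. Throughout, $\mathbb{Z}_n=\langle\sigma_{(n)}\rangle$. I read $\mathrm{Sq}(C_{[d^{n/d}]})$ as the number of $\mathbf{C}_n$-structures on $[n]$ fixed by a permutation of cycle type $(d^{n/d})$. This is the quantity $\#\Fix[\mathbf{C}_n[g]]$ that appears in the cycle index. Pinning this reading down is the main point of care, since the notation is not defined earlier in the paper.

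\emph{First equality.} By the Remark on stabilisers, $|\mathrm{Stab}(P)|$ is a divisor of $n$ for every $P\in\mathbf{SP}_n$. Hence $\mathbf{SP}_n$ is the disjoint union, over $d\mid n$, of the sets $\{P : |\mathrm{Stab}(P)|=d\}$. Proposition~\ref{thm:steggall_interp} identifies the cardinality of each such set with $b_{n,n}^{(d^{n/d})}$. Summing over $d\mid n$ gives $|\mathbf{SP}_n|=\sum_{d\mid n} b_{n,n}^{(d^{n/d})}$. Equivalently, this sum is the total number $|\mathbb{Z}_n\backslash\mathfrak{S}_n/\mathbb{Z}_n|$ of double cosets, because every double coset contributes to exactly one $\mu$ in Proposition~\ref{prop_C}.

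\emph{Second equality.} The set $\mathbb{Z}_n\backslash\mathfrak{S}_n/\mathbb{Z}_n$ is the orbit set of $\mathbb{Z}_n$ acting by left multiplication on the left cosets $\mathfrak{S}_n/\mathbb{Z}_n$, and $\mathfrak{S}_n/\mathbb{Z}_n\cong\mathbf{C}_n[n]$. Burnside's lemma then gives
\[
|\mathbf{SP}_n|=\frac{1}{n}\sum_{g\in\mathbb{Z}_n}\#\{\pi\mathbb{Z}_n : g\pi\mathbb{Z}_n=\pi\mathbb{Z}_n\}.
\]
Next I group the terms of this sum by the order $d$ of $g$. The cyclic group $\mathbb{Z}_n$ has exactly $\varphi(d)$ elements of order $d$, and each has cycle type $(d^{n/d})$, as in Step~1 of the proof of Proposition~\ref{thm:steggall_interp}.

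It remains to check that the number of fixed cosets depends only on the cycle type of $g$. The coset $\pi\mathbb{Z}_n$ is fixed by $g$ exactly when $\pi^{-1}g\pi\in\mathbb{Z}_n$. This condition is transported along conjugation: if $g'=\tau g\tau^{-1}$ and $\tau$ normalises nothing in particular, the map $\pi\mapsto\tau\pi$ is still a bijection from the cosets fixed by $g$ to the cosets fixed by $g'$. Since any two elements of $\mathfrak{S}_n$ with the same cycle type are conjugate, the fixed-point count is a function of the cycle type alone, namely $\mathrm{Sq}(C_{[d^{n/d}]})$. Substituting into the Burnside sum yields the stated formula.

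As a sanity check I would compute the fixed-point count explicitly. Conjugation preserves cycle type, so the condition $\pi^{-1}g\pi\in\mathbb{Z}_n$ forces $\pi^{-1}g\pi$ to be one of the $\varphi(d)$ generators of the unique order-$d$ subgroup of $\mathbb{Z}_n$. For each such target $h$, the permutations $\pi$ with $\pi^{-1}g\pi=h$ form a coset of the centraliser of $g$, which has size $d^{n/d}(n/d)!$. This gives
\[
\mathrm{Sq}(C_{[d^{n/d}]})=\frac{\varphi(d)\,d^{n/d}(n/d)!}{n}.
\]
Plugging this in for $n=4$ gives $\tfrac14(6+2+2\cdot 2)=3$, which agrees with Steggall's three patterns.
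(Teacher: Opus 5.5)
Your proposal is correct and follows the route the statement itself indicates. The first equality comes from summing Proposition~\ref{thm:steggall_interp} over $d\mid n$, since each double coset $\mathbb{Z}_n\pi\mathbb{Z}_n$ contributes to exactly one $\mu=(d^{n/d})$; the second comes from Burnside's lemma for $\mathbb{Z}_n$ acting on $\mathfrak{S}_n/\mathbb{Z}_n\cong\mathbf{C}_n[n]$, grouping the $\varphi(d)$ elements of order $d$, all of cycle type $(d^{n/d})$. Your fixed-point reading of the undefined $\mathrm{Sq}(C_{[d^{n/d}]})$ is the one this Burnside justification requires, and your conjugation argument and closed form $\varphi(d)\,d^{n/d}(n/d)!/n$ (giving $|\mathbf{SP}_n|=\frac{1}{n^2}\sum_{d\mid n}\varphi(d)^2\,d^{n/d}(n/d)!$, which matches the listed values of A002619) supply details the paper leaves implicit.
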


\begin{ex}
For $n = 6$, the divisors of $6$ are $1, 2, 3, 6$. We have:
\[
    b_{6,6}^{(1^6)} 
    = \#\{P \in \mathbf{SP}_6 \mid |\mathrm{Stab}(P)| = 1\} = 18,
\]
\[
    b_{6,6}^{(2^3)} 
    = \#\{P \in \mathbf{SP}_6 \mid |\mathrm{Stab}(P)| = 2\} = 2,
\]
\[
    b_{6,6}^{(3^2)} 
    = \#\{P \in \mathbf{SP}_6 \mid |\mathrm{Stab}(P)| = 3\} = 2,
\]
\[
    b_{6,6}^{(6^1)} 
    = \#\{P \in \mathbf{SP}_6 \mid |\mathrm{Stab}(P)| = 6\} = 2,
\]
giving a total of $18 + 2 + 2 + 2 = 24$ distinct patterns, consistent with the 
known value $|\mathbf{SP}_6| = 24$. Moreover, these values match precisely the 
decomposition $C_6(z) \star C_6(z) = 18\,C_{1^6}(z) + 2\,C_{2^3}(z) + 
2\,C_{3^2}(z) + 2\,C_6(z)$ computed in Example~10.
\end{ex}
\section{Conclusion}
This work establishes a significant framework within algebraic combinatorics by introducing two novel bases for the ring of symmetric functions, denoted $\{\mathbf{C}_{\alpha}(\mathbf{z})\}_{\alpha\vdash n}$ and $\{\mathbf{K}_{\alpha}(\mathbf{z})\}_{\alpha\vdash n}$. Derived from the cycle index series of specialized combinatorial structures—specifically, the cyclic species of the first and second kind—these families are rigorously proven to form bases of $\Lambda_n$.
The central contribution of this paper is the study of Kronecker stability within the subcategories $\mathrm{CEsp}$ and $\mathrm{KEsp}$. By leveraging the theory of combinatorial species and analyzing the subgroups $\langle \sigma_\alpha\rangle$ and $G_{\alpha}$, we establish natural isomorphisms of species yielding the following integral and positive decomposition formulas:
$$
\mathbf{C}_{\alpha} \star \mathbf{C}_{\beta} = \sum_{\mu}b_{\alpha,\beta}^{\mu}\mathbf{C}_{\mu}
\quad \text{and} \quad
\mathbf{K}_{\alpha} \star \mathbf{K}_{\beta} = \sum_{\mu}j_{\alpha,\beta}^{\mu}\mathbf{K}_{\mu}.
$$
This work revisits the foundational results of A.M. Garsia and J. Remmel \cite{GR} on the Kronecker coefficients of $h_{\alpha} \star h_{\beta}$, and positions the bases $\mathbf{C}_{\alpha}$ and $\mathbf{K}_{\alpha}$ as natural extensions of this framework.
A key open problem in this context is the existence of a fully combinatorial interpretation of the Kronecker product in these bases. As a partial answer to this question, we provide a combinatorial interpretation of the structure underlying the direct product $\mathbf{C}_n \times \mathbf{C}_n$, described in terms of the action of cyclic groups on the associated pattern sets. This construction reveals a concrete model for the orbit-stabilizer decomposition in this setting and captures part of the combinatorial structure governing Kronecker interactions in the $\mathbf{C}$-basis.
This research opens two primary avenues for future exploration:
\begin{enumerate}
	\item \textbf{Combinatorial Interpretation:} The most immediate challenge is to determine explicit combinatorial interpretations for the coefficients $b_{\alpha,\beta}^{\mu}$ and $j_{\alpha,\beta}^{\mu}$, analogous to the classical problem for Schur functions.
	\item \textbf{Non-Commutative Extensions:} A further fruitful direction is the investigation of non-commutative versions of these symmetric functions and the exploration of their associated descent algebras. The expansion of the $\mathbf{C}_{\lambda}$ basis into the ribbon function basis $\{r_{\mu}\}_{\mu \models n}$ is given by
	$$
	\mathbf{C}_{\lambda} = \sum_{\mu \models n} \sigma_{\lambda,\mu} r_{\mu},
	$$
	where the coefficients $\sigma_{\lambda,\mu}$ are non-negative integers defined by the algebraic formula:
	$$
	\sigma_{\lambda,\mu} = \sum_{\alpha \vdash n} d_{\lambda,\alpha} K_{\alpha,\mu} \in \mathbb{N}.
	$$
\end{enumerate}
\section{Acknowledgements}
We express our sincere gratitude to François Bergeron for his invaluable supervision of the first author's doctoral research. We are also indebted to Bérénice Delcroix-Oger and Matthieu Josuat-Vergès for their thoughtful feedback and helpful suggestions, which significantly improved this manuscript. This work would not have been possible without the generous financial support of the International Mathematical Union (IMU) through the Graduate Assistantship in Developing Countries (GRAID) program, administered by the Commission for Developing Countries (CDC).
\bibliography{paper}
\bibliographystyle{amsplain}
\end{document}